\numberwithin{equation}{section}
\theoremstyle{plain}
\theoremstyle{definition}
\crefname{theorem}{Theorem}{Theorems}
\crefname{lemma}{Lemma}{Lemmas}
\crefname{proposition}{Proposition}{Propositions}
\crefname{corollary}{Corollary}{Corollaries}
\crefname{definition}{Definition}{Definitions}
\crefname{example}{Example}{Examples}
\crefname{remark}{Remark}{Remarks}
\NewDocumentEnvironment{theorem}{O{} o}
{
  \newcommand*{\TEMPtheorem}{#2}
  \ifblank{#1}
  {\theoremINTERNAL}
  {\theoremINTERNAL[#1]}
  \IfNoValueTF{#2}
  {}
  {\label[theorem]{\TEMPtheorem}}
}
{
  \endtheoremINTERNAL
}
\NewDocumentEnvironment{lemma}{O{} o}
{
  \newcommand*{\TEMPlemma}{#2}
  \ifblank{#1}
  {\lemmaINTERNAL}
  {\lemmaINTERNAL[#1]}
  \IfNoValueTF{#2}
  {}
  {\label[lemma]{\TEMPlemma}}
}
{
  \endlemmaINTERNAL
}
\NewDocumentEnvironment{proposition}{O{} o}
{
  \newcommand*{\TEMPproposition}{#2}
  \ifblank{#1}
  {\propositionINTERNAL}
  {\propositionINTERNAL[#1]}
  \IfNoValueTF{#2}
  {}
  {\label[proposition]{\TEMPproposition}}
}
{
  \endpropositionINTERNAL
}
\NewDocumentEnvironment{corollary}{O{} o}
{
  \newcommand*{\TEMPcorollary}{#2}
  \ifblank{#1}
  {\corollaryINTERNAL}
  {\corollaryINTERNAL[#1]}
  \IfNoValueTF{#2}
  {}
  {\label[corollary]{\TEMPcorollary}}
}
{
  \endcorollaryINTERNAL
}
\NewDocumentEnvironment{definition}{O{} o}
{
  \newcommand*{\TEMPdefinition}{#2}
  \ifblank{#1}
  {\definitionINTERNAL}
  {\definitionINTERNAL[#1]}
  \IfNoValueTF{#2}
  {}
  {\label[definition]{\TEMPdefinition}}
}
{
  \enddefinitionINTERNAL
}
\NewDocumentEnvironment{example}{O{} o}
{
  \newcommand*{\TEMPexample}{#2}
  \ifblank{#1}
  {\exampleINTERNAL}
  {\exampleINTERNAL[#1]}
  \IfNoValueTF{#2}
  {}
  {\label[example]{\TEMPexample}}
}
{
  \endexampleINTERNAL
}
\NewDocumentEnvironment{remark}{O{} o}
{
  \newcommand*{\TEMPremark}{#2}
  \ifblank{#1}
  {\remarkINTERNAL}
  {\remarkINTERNAL[#1]}
  \IfNoValueTF{#2}
  {}
  {\label[remark]{\TEMPremark}}
}
{
  \endremarkINTERNAL
}
\let\OLDdelta\delta
\let\OLDgamma\gamma
\let\OLDvarphi\varphi
\let\OLDlambda\lambda
\let\OLDphi\phi
\let\OLDPhi\Phi
\let\OLDrho\rho
\let\OLDnu\nu
\let\OLDsqcup\sqcup
\let\OLDbigsqcup\bigsqcup
\let\OLDDelta\Delta
\newcommand{\xmathpalette}[2]{%
  \mathchoice
    {#1\displaystyle\textfont{#2}}
    {#1\textstyle\textfont{#2}}
    {#1\scriptstyle\scriptfont{#2}}
    {#1\scriptscriptstyle\scriptscriptfont{#2}}
}
\newcommand{\mres@thickness}[1]{\dimexpr1.5\fontdimen8 #13\relax}
\newcommand{\mresA}{\mspace{3mu}{\xmathpalette\mresA@\relax}\mspace{3mu}}
\newcommand{\mresA@}[3]{%
  \begingroup
  \setlength\unitlength{%
    \dimexpr\fontcharht#21`A-0.5\mres@thickness{#2}
  }%
  \raisebox{0.5\dimexpr\mres@thickness{#2}}{%
    \begin{picture}(1,1)
      \roundcap\roundjoin
      \linethickness{\mres@thickness{#2}}
      \polyline(0,1)(0,0)(1,0)
    \end{picture}%
   }%
  \endgroup
}
\newcommand{\mresB}{\mspace{5mu}{\xmathpalette\mresB@\relax}\mspace{5mu}}
\newcommand{\mresB@}[3]{%
  \begingroup
  \setlength\unitlength{%
    \dimexpr0.8\fontcharht#21`A-0.5\mres@thickness{#2}
  }%
  \raisebox{0.5\dimexpr\mres@thickness{#2}}{%
    \begin{picture}(0.5,1)
      \roundcap\roundjoin
      \linethickness{\mres@thickness{#2}}
      \polyline(0,1)(0,0)(0.5,0)
    \end{picture}%
   }%
  \endgroup
}
\newcommand{\mresC}{\mspace{3mu}{\xmathpalette\mresC@\relax}\mspace{3mu}}
\newcommand{\mresC@}[3]{%
  \begingroup
  \setlength\unitlength{%
    \dimexpr0.8\fontcharht#21`A-0.5\mres@thickness{#2}
  }%
  \raisebox{0.5\dimexpr\mres@thickness{#2}}{%
    \begin{picture}(1,1)
      \roundcap\roundjoin
      \linethickness{\mres@thickness{#2}}
      \polyline(0,1)(0,0)(1,0)
    \end{picture}%
   }%
  \endgroup
}
\newcommand*{\newlink}[2]{#2}
\newcommand*{\newtarget}[2]{#2}
\def\Xint#1{\mathchoice
{\XXint\displaystyle\textstyle{#1}}%
{\XXint\textstyle\scriptstyle{#1}}%
{\XXint\scriptstyle\scriptscriptstyle{#1}}%
{\XXint\scriptscriptstyle\scriptscriptstyle{#1}}%
\!\int}
\def\XXint#1#2#3{{\setbox0=\hbox{$#1{#2#3}{\int}$ }
\vcenter{\hbox{$#2#3$ }}\kern-.6\wd0}}
\def\dashint{\Xint-}
\NewDocumentCommand \switchcase { m m o}
  {
    \str_case:nnF { #1 } { #2 } { #3 }
  }
\NewDocumentCommand{\countlist}{m}
    {
        \clist_count:n { #1 }
    }
\DeclarePairedDelimiter{\absDelimiter}{\lvert}{\rvert}
\NewDocumentCommand{\abs}{s m}{
  \IfBooleanTF{#1}
    {\newlink{abs_global}{}\absDelimiter*{\newlink{abs_global}{}#2\newlink{abs_global}{}}\newlink{abs_global}{}}
    {\newlink{abs_global}{\lvert}#2\newlink{abs_global}{\rvert}}
}
\NewDocumentCommand{\EucNorm}{s m}{
  \IfBooleanTF{#1}
    {\newlink{EucNorm_global}{}\absDelimiter*{\newlink{EucNorm_global}{}#2\newlink{EucNorm_global}{}}_{2}\newlink{EucNorm_global}{}}
    {\newlink{EucNorm_global}{\lvert}#2\newlink{EucNorm_global}{\rvert_{2}}}
}
\NewDocumentCommand{\lebd}{s m}{
  \IfBooleanTF{#1}
    {\newlink{lebesgue_global}{}\absDelimiter*{\newlink{lebesgue_global}{}#2\newlink{lebesgue_global}{}}\newlink{lebesgue_global}{}}
    {\newlink{lebesgue_global}{\lvert}#2\newlink{lebesgue_global}{\rvert}}
}
\DeclarePairedDelimiter{\norm}{\|}{\|}
\DeclarePairedDelimiter{\seminorm}{[}{]}
\DeclarePairedDelimiter{\tonde}{(}{)}
\DeclarePairedDelimiter{\quadre}{[}{]}
\DeclarePairedDelimiter{\graffe}{\{}{\}}
\DeclarePairedDelimiter{\dualPairingDelimiter}{\langle}{\rangle}
\NewDocumentCommand{\dualPairing}{s m m}{
  \IfBooleanTF{#1}
    {\newlink{dualPairing_global}{}\dualPairingDelimiter*{\newlink{dualPairing_global}{}#2, #3\newlink{dualPairing_global}{}}\newlink{dualPairing_global}{}}
    {\newlink{dualPairing_global}{}\dualPairingDelimiter{\newlink{dualPairing_global}{}#2, #3\newlink{dualPairing_global}{}}\newlink{dualPairing_global}{}}
}
\NewDocumentCommand{\R}{s o}{%
  \IfNoValueTF{#2}{\RealNumbersSymbol}{{\RealNumbersSymbol}^{#2}}%
}
\NewDocumentCommand{\N}{s o}{%
  \IfNoValueTF{#2}{\NaturalNumbersSymbol}{{\NaturalNumbersSymbol}^{#2}}%
}
\NewDocumentCommand{\Z}{s o}{%
  \IfNoValueTF{#2}{\IntegerNumbersSymbol}{{\IntegerNumbersSymbol}^{#2}}%
}
\NewDocumentCommand{\C}{s o}{%
  \IfNoValueTF{#2}{\ComplexNumbersSymbol}{{\ComplexNumbersSymbol}^{#2}}%
}
\NewDocumentCommand{\Matrices}{m m o}{
  \IfNoValueTF{#3}
  {\newlink{Matrices_global}{\mathbb{M}_{#1\times#2}(\R)}}
  {\newlink{Matrices_global}{\mathbb{M}_{#1\times#2}\tonde*{#3}}}
}
\NewDocumentCommand{\SquareMatrices}{m o}{
  \IfNoValueTF{#2}
  {\newlink{SquareMatrices_global}{\mathbb{M}_{#1}(\R)}}
  {\newlink{SquareMatrices_global}{\mathbb{M}_{#1}\tonde*{#2}}}
}
\NewDocumentCommand{\SymmetricMatrices}{m o}{
  \IfNoValueTF{#2}
  {\newlink{SymmetricMatrices_global}{\mathbb{S}_{#1}(\R)}}
  {\newlink{SymmetricMatrices_global}{\mathbb{S}_{#1}\tonde*{#2}}}
}
\newcommand*{\eps}{\varepsilon}
\DeclareMathOperator{\distance}{dist}
\NewDocumentCommand{\TopologicalBoundary}{o}{
  \IfNoValueTF{#1}
  {\TopologicalBoundarySymbol}
  {\TopologicalBoundarySymbol#1}
}
\NewDocumentCommand{\supp}{o}{
  \IfNoValueTF{#1}
  {\SupportSymbol}
  {\SupportSymbol\tonde*{#1}}
}
\NewDocumentCommand{\Lip}{o}{
  \IfNoValueTF{#1}
  {\LipschitzSymbol}
  {\LipschitzSymbol\tonde*{#1}}
}
\DeclareMathOperator{\loc}{loc}
\DeclareMathOperator{\scalar}{\newlink{scalar_global}{\cdot}}
\DeclareMathOperator{\convolution}{\newlink{convolution_global}{\ast}}
\NewDocumentCommand{\almostev}{o}{
\IfNoValueTF{#1}{
  \almostevSymbol
  }
  {
    #1\text{-}\almostevSymbol
  }  
}
\NewDocumentCommand{\Lspace}{s o o o}{%
  \IfNoValueTF{#2}{%
    \LebesgueSpaceSymbol%
  }{%
    \IfNoValueTF{#3}{%
    \LebesgueSpaceSymbol^{#2}%
    }{%
        \readlist\temp{#3}%
        \IfNoValueTF{#4}{
          \ifnumequal{\countlist{#3}}{1}{\LebesgueSpaceSymbol^{#2}\tonde*{\temp[1]}}{}%
          \ifnumequal{\countlist{#3}}{2}{\LebesgueSpaceSymbol^{#2}\tonde*{\temp[1],\temp[2]}}{}%
          \ifnumequal{\countlist{#3}}{3}{\LebesgueSpaceSymbol^{#2}\tonde*{\temp[1],\temp[2],\temp[3]}}{}%
        }{%
          \ifnumequal{\countlist{#3}}{1}{\LebesgueSpaceSymbol^{#2}\tonde*{\temp[1];#4}}{}%
          \ifnumequal{\countlist{#3}}{2}{\LebesgueSpaceSymbol^{#2}\tonde*{\temp[1],\temp[2];#4}}{}%
          \ifnumequal{\countlist{#3}}{3}{\LebesgueSpaceSymbol^{#2}\tonde*{\temp[1],\temp[2],\temp[3];#4}}{}%
        }
    }
  }
}
\NewDocumentCommand{\Llocspace}{s o o o}{%
  \IfNoValueTF{#2}{%
    \LebesgueSpacelocSymbol%
  }{%
    \IfNoValueTF{#3}{%
    \LebesgueSpacelocSymbol^{#2}%
    }{%
        \readlist\temp{#3}%
        \IfNoValueTF{#4}{
          \ifnumequal{\countlist{#3}}{1}{\LebesgueSpacelocSymbol^{#2}\tonde*{\temp[1]}}{}%
          \ifnumequal{\countlist{#3}}{2}{\LebesgueSpacelocSymbol^{#2}\tonde*{\temp[1],\temp[2]}}{}%
          \ifnumequal{\countlist{#3}}{3}{\LebesgueSpacelocSymbol^{#2}\tonde*{\temp[1],\temp[2],\temp[3]}}{}%
        }{%
          \ifnumequal{\countlist{#3}}{1}{\LebesgueSpacelocSymbol^{#2}\tonde*{\temp[1];#4}}{}%
          \ifnumequal{\countlist{#3}}{2}{\LebesgueSpacelocSymbol^{#2}\tonde*{\temp[1],\temp[2];#4}}{}%
          \ifnumequal{\countlist{#3}}{3}{\LebesgueSpacelocSymbol^{#2}\tonde*{\temp[1],\temp[2],\temp[3];#4}}{}%
        }
    }
  }
}
\NewDocumentCommand{\Wspace}{s o o o}{%
  \IfNoValueTF{#2}{%
    \SobolevSpaceSymbol%
  }{%
    \IfNoValueTF{#3}{%
    \SobolevSpaceSymbol^{#2}%
    }{%
        \readlist\temp{#3}%
        \IfNoValueTF{#4}{
          \ifnumequal{\countlist{#3}}{1}{\SobolevSpaceSymbol^{#2}\tonde*{\temp[1]}}{}%
          \ifnumequal{\countlist{#3}}{2}{\SobolevSpaceSymbol^{#2}\tonde*{\temp[1],\temp[2]}}{}%
          \ifnumequal{\countlist{#3}}{3}{\SobolevSpaceSymbol^{#2}\tonde*{\temp[1],\temp[2],\temp[3]}}{}%
        }{%
          \ifnumequal{\countlist{#3}}{1}{\SobolevSpaceSymbol^{#2}\tonde*{\temp[1];#4}}{}%
          \ifnumequal{\countlist{#3}}{2}{\SobolevSpaceSymbol^{#2}\tonde*{\temp[1],\temp[2];#4}}{}%
          \ifnumequal{\countlist{#3}}{3}{\SobolevSpaceSymbol^{#2}\tonde*{\temp[1],\temp[2],\temp[3];#4}}{}%
        }
    }
  }
}
\NewDocumentCommand{\Wlocspace}{s o o o}{%
  \IfNoValueTF{#2}{%
    \SobolevSpacelocSymbol%
  }{%
    \IfNoValueTF{#3}{%
    \SobolevSpacelocSymbol^{#2}%
    }{%
        \readlist\temp{#3}%
        \IfNoValueTF{#4}{
          \ifnumequal{\countlist{#3}}{1}{\SobolevSpacelocSymbol^{#2}\tonde*{\temp[1]}}{}%
          \ifnumequal{\countlist{#3}}{2}{\SobolevSpacelocSymbol^{#2}\tonde*{\temp[1],\temp[2]}}{}%
          \ifnumequal{\countlist{#3}}{3}{\SobolevSpacelocSymbol^{#2}\tonde*{\temp[1],\temp[2],\temp[3]}}{}%
        }{%
          \ifnumequal{\countlist{#3}}{1}{\SobolevSpacelocSymbol^{#2}\tonde*{\temp[1];#4}}{}%
          \ifnumequal{\countlist{#3}}{2}{\SobolevSpacelocSymbol^{#2}\tonde*{\temp[1],\temp[2];#4}}{}%
          \ifnumequal{\countlist{#3}}{3}{\SobolevSpacelocSymbol^{#2}\tonde*{\temp[1],\temp[2],\temp[3];#4}}{}%
        }
    }
  }
}
\NewDocumentCommand{\BVspace}{s o o}{%
  \IfNoValueTF{#2}{%
    \BoundedVariationFunctionsSpaceSymbol%
  }{%
    \IfNoValueTF{#3}{%
      \BoundedVariationFunctionsSpaceSymbol\tonde*{#2}%
    }{%
      \BoundedVariationFunctionsSpaceSymbol\tonde*{#2;#3}
    }
  }
}
\NewDocumentCommand{\BVlocspace}{s o o}{%
  \IfNoValueTF{#2}{%
    \BoundedVariationFunctionsSpacelocSymbol%
  }{%
    \IfNoValueTF{#3}{%
      \BoundedVariationFunctionsSpacelocSymbol\tonde*{#2}%
    }{%
      \BoundedVariationFunctionsSpacelocSymbol\tonde*{#2;#3}
    }
  }
}
\NewDocumentCommand{\nonlocalBVspace}{o o o}{%
  \IfNoValueTF{#2}{%
      \newlink{nonlocalBVspace_global}{\BoundedVariationFunctionsSpaceSymbol*^{#1}}%
  }{%
    \IfNoValueTF{#3}{%
      \newlink{nonlocalBVspace_global}{\BoundedVariationFunctionsSpaceSymbol*^{#1}}\tonde*{#2}%
    }{%
      \newlink{nonlocalBVspace_global}{\BoundedVariationFunctionsSpaceSymbol*^{#1}}\tonde*{#2;#3}
    }
  }
}
\NewDocumentCommand{\nonlocalBVlocspace}{o o o}{%
  \IfNoValueTF{#2}{%
      \newlink{nonlocalBVlocspace_global}{\BoundedVariationFunctionsSpacelocSymbol*^{#1}}%
  }{%
    \IfNoValueTF{#3}{%
      \newlink{nonlocalBVlocspace_global}{\BoundedVariationFunctionsSpacelocSymbol*^{#1}}\tonde*{#2}%
    }{%
      \newlink{nonlocalBVlocspace_global}{\BoundedVariationFunctionsSpacelocSymbol*^{#1}}\tonde*{#2;#3}
    }
  }
}
\NewDocumentCommand{\nonlocalSobolevspace}{s o o o}{%
  \IfNoValueTF{#2}{%
    \SobolevSpaceSymbol%
  }{%
    \IfNoValueTF{#3}{%
    \SobolevSpaceSymbol^{#2}%
    }{%
        \readlist\temp{#3}%
        \IfNoValueTF{#4}{
          \ifnumequal{\countlist{#3}}{1}{\SobolevSpaceSymbol^{#2}\tonde*{\temp[1]}}{}%
          \ifnumequal{\countlist{#3}}{2}{\SobolevSpaceSymbol^{#2}\tonde*{\temp[1],\temp[2]}}{}%
          \ifnumequal{\countlist{#3}}{3}{\SobolevSpaceSymbol^{#2}\tonde*{\temp[1],\temp[2],\temp[3]}}{}%
        }{%
          \ifnumequal{\countlist{#3}}{1}{\SobolevSpaceSymbol^{#2}\tonde*{\temp[1];#4}}{}%
          \ifnumequal{\countlist{#3}}{2}{\SobolevSpaceSymbol^{#2}\tonde*{\temp[1],\temp[2];#4}}{}%
          \ifnumequal{\countlist{#3}}{3}{\SobolevSpaceSymbol^{#2}\tonde*{\temp[1],\temp[2],\temp[3];#4}}{}%
        }
    }
  }
}
\NewDocumentCommand{\nonlocalSobolevlocspace}{s o o o}{%
  \IfNoValueTF{#2}{%
    \SobolevSpacelocSymbol%
  }{%
    \IfNoValueTF{#3}{%
    \SobolevSpacelocSymbol^{#2}%
    }{%
        \readlist\temp{#3}%
        \IfNoValueTF{#4}{
          \ifnumequal{\countlist{#3}}{1}{\SobolevSpacelocSymbol^{#2}\tonde*{\temp[1]}}{}%
          \ifnumequal{\countlist{#3}}{2}{\SobolevSpacelocSymbol^{#2}\tonde*{\temp[1],\temp[2]}}{}%
          \ifnumequal{\countlist{#3}}{3}{\SobolevSpacelocSymbol^{#2}\tonde*{\temp[1],\temp[2],\temp[3]}}{}%
        }{%
          \ifnumequal{\countlist{#3}}{1}{\SobolevSpacelocSymbol^{#2}\tonde*{\temp[1];#4}}{}%
          \ifnumequal{\countlist{#3}}{2}{\SobolevSpacelocSymbol^{#2}\tonde*{\temp[1],\temp[2];#4}}{}%
          \ifnumequal{\countlist{#3}}{3}{\SobolevSpacelocSymbol^{#2}\tonde*{\temp[1],\temp[2],\temp[3];#4}}{}%
        }
    }
  }
}
\NewDocumentCommand{\Cspace}{s o o o}{%
  \providecommand*{\CspaceTag}{\newlink{Cspace_global}{C}}%
  \IfBooleanT{#1}{\renewcommand*{\CspaceTag}{C}}%
  \IfNoValueTF{#2}{%
    \CspaceTag%
  }{%
    \IfNoValueTF{#3}{%
    \CspaceTag^{#2}%
    }{%
        \readlist\temp{#3}%
        \IfNoValueTF{#4}{
          \ifnumequal{\countlist{#3}}{1}{\CspaceTag^{#2}\tonde*{\temp[1]}}{}%
        }{%
          \ifnumequal{\countlist{#3}}{1}{\CspaceTag^{#2}\tonde*{\temp[1];#4}}{}%
        }
    }
  }
}
\NewDocumentCommand{\CcptSpace}{s o o o}{%
  \providecommand*{\CcptSpaceTag}{\newlink{CcptSpace_global}{C}}%
  \IfBooleanT{#1}{\renewcommand*{\CcptSpaceTag}{C}}%
  \IfNoValueTF{#2}{%
    \CcptSpaceTag_c%
  }{%
    \IfNoValueTF{#3}{%
    \CcptSpaceTag^{#2}_c%
    }{%
        \readlist\temp{#3}%
        \IfNoValueTF{#4}{
          \ifnumequal{\countlist{#3}}{1}{\CcptSpaceTag^{#2}_c\tonde*{\temp[1]}}{}%
        }{%
          \ifnumequal{\countlist{#3}}{1}{\CcptSpaceTag^{#2}_c\tonde*{\temp[1];#4}}{}%
        }
    }
  }
}
\NewDocumentCommand{\SchSpace}{s o o}{%
  \providecommand*{\SchSpaceTag}{\newlink{SchSpace_global}{\mathcal{S}}}%
  \IfBooleanT{#1}{\renewcommand*{\SchSpaceTag}{\mathcal{S}}}%
  \IfNoValueTF{#2}{%
    \SchSpaceTag%
  }{%
    \readlist\temp{#2}%
    \IfNoValueTF{#3}{
      \ifnumequal{\countlist{#2}}{1}{\SchSpaceTag\tonde*{\temp[1]}}{}%
    }{%
      \ifnumequal{\countlist{#2}}{1}{\SchSpaceTag\tonde*{\temp[1];#3}}{}%
    }
  }
}
\NewDocumentCommand{\PositiveMeasures}{s o}{%
\providecommand*{\PositiveMeasuresTag}{}%
  \IfBooleanTF{#1}%
    {\renewcommand*{\PositiveMeasuresTag}{\mathcal{M}^{+}}}%
    {\renewcommand*{\PositiveMeasuresTag}{\newlink{PositiveMeasures_global}{\mathcal{M}^{+}}}}
  \IfNoValueTF{#2}%
    {\PositiveMeasuresTag}%
    {\PositiveMeasuresTag\tonde*{#2}}%
}
\NewDocumentCommand{\PositiveBoundedMeasures}{s o}{%
\providecommand*{\PositiveBoundedMeasuresTag}{}%
  \IfBooleanTF{#1}%
    {\renewcommand*{\PositiveBoundedMeasuresTag}{\mathcal{M}^{+}_{b}}}%
    {\renewcommand*{\PositiveBoundedMeasuresTag}{\newlink{PositiveBoundedMeasures_global}{\mathcal{M}^{+}_{b}}}}
  \IfNoValueTF{#2}%
    {\PositiveBoundedMeasuresTag}%
    {\PositiveBoundedMeasuresTag\tonde*{#2}}%
}
\NewDocumentCommand{\SignedMeasures}{s o}{%
\providecommand*{\SignedMeasuresTag}{}%
  \IfBooleanTF{#1}%
    {\renewcommand*{\SignedMeasuresTag}{\mathcal{M}_{b}}}%
    {\renewcommand*{\SignedMeasuresTag}{\newlink{SignedMeasures_global}{\mathcal{M}_{b}}}}
  \IfNoValueTF{#2}%
    {\SignedMeasuresTag}%
    {\SignedMeasuresTag\tonde*{#2;\R}}%
}
\NewDocumentCommand{\VectorMeasures}{s o o}{%
\providecommand*{\VectorMeasuresTag}{}%
  \IfBooleanTF{#1}%
    {\renewcommand*{\VectorMeasuresTag}{\mathcal{M}_{b}}}%
    {\renewcommand*{\VectorMeasuresTag}{\newlink{VectorMeasures_global}{\mathcal{M}_{b}}}}
  \IfNoValueTF{#2}%
    {\VectorMeasuresTag}%
    {\VectorMeasuresTag\tonde*{#2;#3}}%
}
\NewDocumentCommand{\Borel}{s o}{%
\newcommand*{\BorelTag}{\newlink{BorelSets_global}{\mathcal{B}}}%
  \IfBooleanT{#1}%
    {\renewcommand*{\BorelTag}{\mathcal{B}}}%
  \IfNoValueTF{#2}%
    {\BorelTag}%
    {\BorelTag\tonde*{#2}}%
}
\NewDocumentCommand{\dirac}{s o o}{%
    \providecommand*{\diractag}{}%
    \IfBooleanTF{#1}
    {\renewcommand*{\diractag}{\delta}}%
    {\renewcommand*{\diractag}{\newlink{dirac_global}{\delta}}}%
    \IfNoValueTF{#2}%
    {\diractag}%
    {%
        \ifstrempty{#2}
        {%
            \IfNoValueTF{#3}%
            {%
                \diractag%
            }%
            {%
                \diractag\tonde*{#3}%
            }%
        }%
        {%
            \IfNoValueTF{#3}%
            {%
                {\diractag}_{#2}}%
            {%
                {\diractag}_{#2}\tonde*{#3}%
            }%
        }%
    }%
}
\NewDocumentCommand{\lebesgue}{s m o}{%
  \providecommand*{\lebesguetag}{}%
  \IfBooleanTF{#1}
  {\renewcommand*{\lebesguetag}{\mathcal{L}}}%
  {\renewcommand*{\lebesguetag}{\newlink{lebesgue_global}{\mathcal{L}}}}%
  \IfNoValueTF{#3}%
  {%
      {\lebesguetag}^{#2}}%
  {%
      {\lebesguetag}^{#2}\tonde*{#3}%
  }%
}
\newcommand*{\integral}[1]{\int_{#1}}
\NewDocumentCommand{\integralmean}{m o}{
  \IfNoValueTF{#2}
  {\newlink{integralmean1_global}{\dashint_{#1}}}
  {\newlink{integralmean2_global}{\dashint_{#1}^{#2}}}
}
\newcommand*{\de}{\mathrm{d}}
\newcommand*{\integralde}{\thinspace\mathrm{d}}
\newcommand*{\unmezzo}{\frac{1}{2}}
\newcommand*{\BallRadiusCenter}[2]{{\BallSymbol}_{#1}({#2})}
\newcommand*{\BallVolumeCenter}[2]{{\BallSymbol}^{#1}({#2})}
\newcommand*{\BallRadius}[1]{{\BallSymbol}_{#1}}
\newcommand*{\BallVolume}[1]{{\BallSymbol}^{#1}}
\newcommand*{\comp}[1]{{#1}^{\newlink{complement_global}{c}}} 
\NewDocumentEnvironment{claim}{o}
{
  \IfNoValueTF{#1}
  {\innerclaimNotNumbered}
  {\innerclaim}
}
{
  \IfNoValueTF{#1}
  {\endinnerclaimNotNumbered}
  {\endinnerclaim}
}
\NewDocumentCommand{\FourierTransform}{s o o}{%
\providecommand*{\FourierTransformTag}{\newlink{FourierTransform_global}{\mathfrak{F}}}%
  \IfBooleanT{#1}%
    {\renewcommand*{\FourierTransformTag}{\mathfrak{F}}}%
  \IfNoValueTF{#2}%
  {%
    \IfNoValueTF{#3}
    {\FourierTransformTag}
    {{\FourierTransformTag}_{#3}}
  }%
  {%
    \IfNoValueTF{#3}
    {\FourierTransformTag\quadre*{#2}}
    {{\FourierTransformTag}_{#3}\quadre*{#2}}
  }%
}
\newcommand*{\FourierHat}[1]%
{%
  \newlink{FourierTransform_global}{\widehat{#1}}
}%
\NewDocumentCommand{\Distrib}{s m o}{
  \providecommand*{\Distribtag}{}%
    \IfBooleanTF{#1}
    {\renewcommand*{\Distribtag}{T}}%
    {\renewcommand*{\Distribtag}{\newlink{Distrib_global}{T}}}%
    \IfNoValueTF{#3}%
    {\Distribtag\tonde*{#2}}
    {\dualPairing{\Distribtag\tonde*{#2}}{#3}}
}
\ProvideDocumentCommand{\pdv}{o m m}
{\url{https://ctan.mirror.garr.it/mirrors/ctan/macros/latex/contrib/derivative/derivative.pdf}}
\ProvideDocumentCommand{\odv}{o m m}
{\url{https://ctan.mirror.garr.it/mirrors/ctan/macros/latex/contrib/derivative/derivative.pdf}}
\NewDocumentCommand{\cmpn}{s m m}
{
  \IfBooleanTF{#1}
  {{\tonde*{#2}}_{#3}}
  {{#2}_{#3}}
}
\NewDocumentCommand{\posPart}{s m}{
  \providecommand*{\posParttag}{}%
    \IfBooleanTF{#1}
    {\renewcommand*{\posParttag}{+}}%
    {\renewcommand*{\posParttag}{\newlink{posPart_global}{+}}}%
    {#2}^{\posParttag}
}
\NewDocumentCommand{\negPart}{s m}{
  \providecommand*{\negParttag}{}%
    \IfBooleanTF{#1}
    {\renewcommand*{\negParttag}{-}}%
    {\renewcommand*{\negParttag}{\newlink{negPart_global}{-}}}%
    {#2}^{\negParttag}
}
\NewDocumentCommand{\posPartMeasure}{s m}{
  \providecommand*{\posPartMeasuretag}{}%
    \IfBooleanTF{#1}
    {\renewcommand*{\posPartMeasuretag}{+}}%
    {\renewcommand*{\posPartMeasuretag}{\newlink{posPartMeasure_global}{+}}}%
    {#2}^{\posPartMeasuretag}
}
\NewDocumentCommand{\negPartMeasure}{s m}{
  \providecommand*{\negPartMeasuretag}{}%
    \IfBooleanTF{#1}
    {\renewcommand*{\negPartMeasuretag}{-}}%
    {\renewcommand*{\negPartMeasuretag}{\newlink{negPartMeasure_global}{-}}}%
    {#2}^{\negPartMeasuretag}
}
\NewDocumentCommand{\TVmeas}{m o}{
  \IfNoValueTF{#2}
  {\newlink{TVmeas_global}{\lvert}#1\newlink{TVmeas_global}{\rvert}}
  {\newlink{TVmeas_global}{\lvert}#1\newlink{TVmeas_global}{\rvert}\tonde*{#2}}
}
\NewDocumentCommand{\sbdif}{o o}{
  \IfNoValueTF{#1}{
    \SubdifferentialSymbol
  }{
    \IfNoValueTF{#2}
    {\SubdifferentialSymbol#1}
    {\SubdifferentialSymbol#1\tonde*{#2}}
  }
}
\NewDocumentCommand{\intervallo}{m m m}{
  \providecommand*{\leftdelimiter}{undef}
  \providecommand*{\rightdelimiter}{undef}
  \ifstrequal{#1}{()}{(#2,#3)}{}
  \ifstrequal{#1}{[]}{[#2,#3]}{}
  \ifstrequal{#1}{(]}{(#2,#3]}{}
  \ifstrequal{#1}{[)}{[#2,#3)}{}
}
\NewDocumentCommand{\J}{m o}{
    \IfNoValueTF{#2}
        {\ModulusOfDeterminantOFJacobianMatrixSymbol_{#1}}
        {\ModulusOfDeterminantOFJacobianMatrixSymbol_{#1}\tonde*{#2}}
}
\NewDocumentCommand{\CharFun}{m o}{
    \IfNoValueTF{#2}
        {\CharacteristicFunctionSymbol_{#1}}
        {\CharacteristicFunctionSymbol_{#1}\tonde*{#2}}
}
\newcommand*{\conjexp}[1]%
{%
  {#1}^{\ConjugateExponentSymbol}%
}
\newcommand*{\closure}[1]{\overline{#1}}
\newcommand\restr[2]{{
  \left.\kern-\nulldelimiterspace 
  #1 
  \vphantom{\big|} 
  \right|_{#2} 
  }}
\newcommand*{\NonlocalIsopPBKernelVolume}[2]{\tonde*{\hyperlink{NonlocalIsopPB}{{\NonlocalIsoPBSymbol}_{#1,#2}}}}
\newcommand*{\NonlocalMaxPBKernelVolume}[2]{\tonde*{\hyperlink{NonlocalMaxPB}{{\NonlocalMaxPBSymbol}_{#1,#2}}}}
\newcommand*{\weaklyto}{\rightharpoonup}
\newcommand*{\rearr}[1]{{#1}^{*}}
\NewDocumentCommand{\PKresOm}{o}
{ 
  \IfNoValueTF{#1}
    {\NonlocalPerimeterSymbol_{\K}\tonde*{.;\Om}}
    {\NonlocalPerimeterSymbol_{\K}\tonde*{#1;\Om}}
}
\NewDocumentCommand{\PKresRd}{o}
{ 
  \IfNoValueTF{#1}
    {\NonlocalPerimeterSymbol_{\K}\tonde*{.;\Rd}}
    {\NonlocalPerimeterSymbol_{\K}\tonde*{#1;\Rd}}
}
\newcommand*{\NtsOne}{\textup{(}\hyperref[H:Nts]{$\mathrm{Nts_1}$}\textup{)}}
\newcommand*{\Rd}{\R[\d]}
\newcommand*{\RdmenoOrigine}{\Rd\setminus\graffe{0}}
\newcommand*{\OmOm}{\Om\times\Om}
\newcommand*{\LpRd}{\Lspace[\p][\Rd]}
\newcommand*{\LpOm}{\Lspace[\p][\Om]}
\newcommand*{\LoneRd}{\Lspace[1][\Rd]}
\newcommand*{\LoneOm}{\Lspace[1][\Om]}
\newcommand*{\LonelocRd}{\Llocspace[1][\Rd]}
\newcommand*{\LinftyRd}{\Lspace[\infty][\Rd]}
\newcommand*{\normLoneRd}[1]{\norm*{#1}_{\LoneRd}}
\newcommand*{\normLpRd}[1]{\norm*{#1}_{\LpRd}}
\newcommand*{\intRd}{\integral{\Rd}}
\newcommand*{\intOmOm}{\integral{\OmOm}}
\newcommand*{\CcinftyRd}{\CcptSpace[\infty][\Rd]}
\newcommand*{\CinftyRd}{\Cspace[\infty][\Rd]}
\newcommand*{\BVK}{\nonlocalBVspace[\K][\Rd]}
\newcommand*{\WKp}{\nonlocalSobolevspace[\K,\p][\Rd]}
\newcommand*{\WKone}{\nonlocalSobolevspace[\K,1][\Rd]}
\newcommand*{\WKpOm}{\nonlocalSobolevspace[\K,\p][\Om]}
\newcommand*{\BVKk}{\nonlocalBVspace[\Kk][\Rd]}
\newcommand*{\BVKpiuKsymMezzi}{\nonlocalBVspace[\KpiuKsymMezzi][\Rd]}
\newcommand*{\Bdelta}{\BallRadius{\delta}}
\newcommand*{\Br}{\BallRadius{\r}}
\newcommand*{\Brp}{\BallRadius{\rp}}
\newcommand*{\BR}{\BallRadius{\Rpos}}
\newcommand*{\Beps}{\BallRadius{\eps}}
\NewDocumentCommand{\seminormBVK}{o}{
\IfNoValueTF{#1}
{\seminorm{.}_{\BVK}}
{\seminorm{#1}_{\BVK}}    
}
\NewDocumentCommand{\normBVK}{o}{
\IfNoValueTF{#1}
{\norm{.}_{\BVK}}
{\norm{#1}_{\BVK}}    
}
\NewDocumentCommand{\seminormBVKk}{o}{
\IfNoValueTF{#1}
{\seminorm{.}_{\BVKk}}
{\seminorm{#1}_{\BVKk}}    
}
\NewDocumentCommand{\seminormWKp}{o}{
\IfNoValueTF{#1}
{\seminorm{.}_{\WKp}}
{\seminorm{#1}_{\WKp}}    
}
\NewDocumentCommand{\seminormWKone}{o}{
\IfNoValueTF{#1}
{\seminorm{.}_{\WKone}}
{\seminorm{#1}_{\WKone}}    
}
\NewDocumentCommand{\seminormWKpOm}{o}{
\IfNoValueTF{#1}
{\seminorm{.}_{\WKpOm}}
{\seminorm{#1}_{\WKpOm}}    
}
\NewDocumentCommand{\normWKp}{o}{
\IfNoValueTF{#1}
{\norm{.}_{\WKp}}
{\norm{#1}_{\WKp}}    
}
\NewDocumentCommand{\normWKpOm}{o}{
\IfNoValueTF{#1}
{\norm{.}_{\WKpOm}}
{\norm{#1}_{\WKpOm}}    
}
\NewDocumentCommand{\seminormBVKpiuKsymMezzi}{o}{
\IfNoValueTF{#1}
{\seminorm{.}_{\BVKpiuKsymMezzi}}
{\seminorm{#1}_{\BVKpiuKsymMezzi}}    
}
\newcommand*{\Ir}{\intervallo{()}{-\r}{\r}}
\newcommand*{\bdry}[1]{\TopologicalBoundary[#1]}
\newcommand*{\Hdmenouno}{\HausdorffMeasureSymbol^{\d-1}}
\newcommand*{\intrinfty}{\int_{\r}^{+\infty}}
\newcommand*{\intmenoinftymenor}{\int_{-\infty}^{-\r}}
\newcommand*{\intmenorr}{\int_{-\r}^{\r}}
\newcommand*{\intmenoMmenor}{\int_{-\M}^{-\r}}
\newcommand*{\raggioGiusto}{\r/\sqrt[\d]{2}}
\newcommand*{\palluno}{\BallRadiusCenter{\eps}{\xzero/2}}
\newcommand*{\palldue}{\BallRadiusCenter{\eps}{-\xzero/2}}
\newcommand*{\limepstozeropiu}{\lim_{\eps\to0^+}}
\newcommand*{\unoinfinitoescluso}{\intervallo{[)}{1}{+\infty}}
\newcommand*{\Bm}{\BallVolume{\m}}
\NewDocumentCommand{\almostevSymbol}{s}{%
    \providecommand*{\almostevSymboltag}{}%
    \IfBooleanTF{#1}
    {\renewcommand*{\almostevSymboltag}{\operatorname{a.e.}}}%
    {\renewcommand*{\almostevSymboltag}{\newlink{almostevSymbol_global}{\operatorname{a.e.}}}}%
    \almostevSymboltag%
}
\NewDocumentCommand{\TARGETalmostevSymbol}{o}%
{%
    \IfNoValueTF{#1}%
        {\newtarget{almostevSymbol_global}{\almostevSymbol}}%
        {\newtarget{almostevSymbol_global}{#1}}%
}
\NewDocumentCommand{\BallSymbol}{s}{%
    \providecommand*{\BallSymboltag}{}%
    \IfBooleanTF{#1}
    {\renewcommand*{\BallSymboltag}{B}}%
    {\renewcommand*{\BallSymboltag}{\newlink{BallSymbol_global}{B}}}%
    \BallSymboltag%
}
\NewDocumentCommand{\TARGETBallSymbol}{o}%
{%
    \IfNoValueTF{#1}%
        {\newtarget{BallSymbol_global}{\BallSymbol}}%
        {\newtarget{BallSymbol_global}{#1}}%
}
\NewDocumentCommand{\bigsqcup}{s}{%
    \providecommand*{\bigsqcuptag}{}%
    \IfBooleanTF{#1}
    {\renewcommand*{\bigsqcuptag}{\OLDbigsqcup}}%
    {\renewcommand*{\bigsqcuptag}{\newlink{bigsqcup_global}{\OLDbigsqcup}}}%
    \bigsqcuptag%
}
\NewDocumentCommand{\TARGETbigsqcup}{o}%
{%
    \IfNoValueTF{#1}%
        {\newtarget{bigsqcup_global}{\bigsqcup}}%
        {\newtarget{bigsqcup_global}{#1}}%
}
\NewDocumentCommand{\BoundedVariationFunctionsSpacelocSymbol}{s}{%
    \providecommand*{\BoundedVariationFunctionsSpacelocSymboltag}{}%
    \IfBooleanTF{#1}
    {\renewcommand*{\BoundedVariationFunctionsSpacelocSymboltag}{{\BoundedVariationFunctionsSpaceSymbol}_{\loc}}}%
    {\renewcommand*{\BoundedVariationFunctionsSpacelocSymboltag}{\newlink{OFF_global}{{\BoundedVariationFunctionsSpaceSymbol}_{\loc}}}}%
    \BoundedVariationFunctionsSpacelocSymboltag%
}
\NewDocumentCommand{\TARGETBoundedVariationFunctionsSpacelocSymbol}{o}%
{%
    \IfNoValueTF{#1}%
        {\newtarget{OFF_global}{\BoundedVariationFunctionsSpacelocSymbol}}%
        {\newtarget{OFF_global}{#1}}%
}
\NewDocumentCommand{\BoundedVariationFunctionsSpaceSymbol}{s}{%
    \providecommand*{\BoundedVariationFunctionsSpaceSymboltag}{}%
    \IfBooleanTF{#1}
    {\renewcommand*{\BoundedVariationFunctionsSpaceSymboltag}{BV}}%
    {\renewcommand*{\BoundedVariationFunctionsSpaceSymboltag}{\newlink{BoundedVariationFunctionsSpaceSymbol_global}{BV}}}%
    \BoundedVariationFunctionsSpaceSymboltag%
}
\NewDocumentCommand{\TARGETBoundedVariationFunctionsSpaceSymbol}{o}%
{%
    \IfNoValueTF{#1}%
        {\newtarget{BoundedVariationFunctionsSpaceSymbol_global}{\BoundedVariationFunctionsSpaceSymbol}}%
        {\newtarget{BoundedVariationFunctionsSpaceSymbol_global}{#1}}%
}
\NewDocumentCommand{\CharacteristicFunctionSymbol}{s}{%
    \providecommand*{\CharacteristicFunctionSymboltag}{}%
    \IfBooleanTF{#1}
    {\renewcommand*{\CharacteristicFunctionSymboltag}{\mathds{1}}}%
    {\renewcommand*{\CharacteristicFunctionSymboltag}{\newlink{CharacteristicFunctionSymbol_global}{\mathds{1}}}}%
    \CharacteristicFunctionSymboltag%
}
\NewDocumentCommand{\TARGETCharacteristicFunctionSymbol}{o}%
{%
    \IfNoValueTF{#1}%
        {\newtarget{CharacteristicFunctionSymbol_global}{\CharacteristicFunctionSymbol}}%
        {\newtarget{CharacteristicFunctionSymbol_global}{#1}}%
}
\NewDocumentCommand{\ComplexNumbersSymbol}{s}{%
    \providecommand*{\ComplexNumbersSymboltag}{}%
    \IfBooleanTF{#1}
    {\renewcommand*{\ComplexNumbersSymboltag}{\mathbb{C}}}%
    {\renewcommand*{\ComplexNumbersSymboltag}{\newlink{ComplexNumbersSymbol_global}{\mathbb{C}}}}%
    \ComplexNumbersSymboltag%
}
\NewDocumentCommand{\TARGETComplexNumbersSymbol}{o}%
{%
    \IfNoValueTF{#1}%
        {\newtarget{ComplexNumbersSymbol_global}{\ComplexNumbersSymbol}}%
        {\newtarget{ComplexNumbersSymbol_global}{#1}}%
}
\NewDocumentCommand{\ConjugateExponentSymbol}{s}{%
    \providecommand*{\ConjugateExponentSymboltag}{}%
    \IfBooleanTF{#1}
    {\renewcommand*{\ConjugateExponentSymboltag}{\prime}}%
    {\renewcommand*{\ConjugateExponentSymboltag}{\newlink{ConjugateExponentSymbol_global}{\prime}}}%
    \ConjugateExponentSymboltag%
}
\NewDocumentCommand{\TARGETConjugateExponentSymbol}{o}%
{%
    \IfNoValueTF{#1}%
        {\newtarget{ConjugateExponentSymbol_global}{\ConjugateExponentSymbol}}%
        {\newtarget{ConjugateExponentSymbol_global}{#1}}%
}
\NewDocumentCommand{\crM}{s}{%
    \providecommand*{\crMtag}{}%
    \IfBooleanTF{#1}
    {\renewcommand*{\crMtag}{c_{\r,\M}}}%
    {\renewcommand*{\crMtag}{\newlink{OFF_global}{c_{\r,\M}}}}%
    \crMtag%
}
\NewDocumentCommand{\TARGETcrM}{o}%
{%
    \IfNoValueTF{#1}%
        {\newtarget{OFF_global}{\crM}}%
        {\newtarget{OFF_global}{#1}}%
}
\NewDocumentCommand{\czero}{s}{%
    \providecommand*{\czerotag}{}%
    \IfBooleanTF{#1}
    {\renewcommand*{\czerotag}{c_{0}}}%
    {\renewcommand*{\czerotag}{\newlink{OFF_global}{c_{0}}}}%
    \czerotag%
}
\NewDocumentCommand{\TARGETczero}{o}%
{%
    \IfNoValueTF{#1}%
        {\newtarget{OFF_global}{\czero}}%
        {\newtarget{OFF_global}{#1}}%
}
\NewDocumentCommand{\d}{s}{%
    \providecommand*{\dtag}{}%
    \IfBooleanTF{#1}
    {\renewcommand*{\dtag}{d}}%
    {\renewcommand*{\dtag}{\newlink{d_global}{d}}}%
    \dtag%
}
\NewDocumentCommand{\TARGETd}{o}%
{%
    \IfNoValueTF{#1}%
        {\newtarget{d_global}{\d}}%
        {\newtarget{d_global}{#1}}%
}
\NewDocumentCommand{\delta}{s}{%
    \providecommand*{\deltatag}{}%
    \IfBooleanTF{#1}
    {\renewcommand*{\deltatag}{\OLDdelta}}%
    {\renewcommand*{\deltatag}{\newlink{OFF_global}{\OLDdelta}}}%
    \deltatag%
}
\NewDocumentCommand{\TARGETdelta}{o}%
{%
    \IfNoValueTF{#1}%
        {\newtarget{OFF_global}{\delta}}%
        {\newtarget{OFF_global}{#1}}%
}
\NewDocumentCommand{\Deltah}{s}{%
    \providecommand*{\Deltahtag}{}%
    \IfBooleanTF{#1}
    {\renewcommand*{\Deltahtag}{\ForwardDifferenceOperatorSymbol_{\h}}}%
    {\renewcommand*{\Deltahtag}{\newlink{OFF_global}{\ForwardDifferenceOperatorSymbol_{\h}}}}%
    \Deltahtag%
}
\NewDocumentCommand{\TARGETDeltah}{o}%
{%
    \IfNoValueTF{#1}%
        {\newtarget{OFF_global}{\Deltah}}%
        {\newtarget{OFF_global}{#1}}%
}
\NewDocumentCommand{\Deltahu}{s o}{%
    \providecommand*{\Deltahutag}{}%
    \IfBooleanTF{#1}
    {\renewcommand*{\Deltahutag}{\Deltah\u}}%
    {\renewcommand*{\Deltahutag}{\newlink{OFF_global}{\Deltah\u}}}%
    \IfNoValueTF{#2}%
    {\Deltahutag}%
    {%
        {\Deltahutag}\tonde*{#2}%
    }%
}
\NewDocumentCommand{\TARGETDeltahu}{o}%
{%
    \IfNoValueTF{#1}%
        {\newtarget{OFF_global}{\Deltahu}}%
        {\newtarget{OFF_global}{#1}}%
}
\NewDocumentCommand{\Deltahueps}{s o}{%
    \providecommand*{\Deltahuepstag}{}%
    \IfBooleanTF{#1}
    {\renewcommand*{\Deltahuepstag}{\Deltah\ueps}}%
    {\renewcommand*{\Deltahuepstag}{\newlink{OFF_global}{\Deltah\ueps}}}%
    \IfNoValueTF{#2}%
    {\Deltahuepstag}%
    {%
        {\Deltahuepstag}\tonde*{#2}%
    }%
}
\NewDocumentCommand{\TARGETDeltahueps}{o}%
{%
    \IfNoValueTF{#1}%
        {\newtarget{OFF_global}{\Deltahueps}}%
        {\newtarget{OFF_global}{#1}}%
}
\NewDocumentCommand{\Deps}{s o}{%
    \providecommand*{\Depstag}{}%
    \IfBooleanTF{#1}
    {\renewcommand*{\Depstag}{D_{\eps}}}%
    {\renewcommand*{\Depstag}{\newlink{OFF_global}{D_{\eps}}}}%
    \IfNoValueTF{#2}%
    {\Depstag}%
    {%
        {\Depstag}\tonde*{#2}%
    }%
}
\NewDocumentCommand{\TARGETDeps}{o}%
{%
    \IfNoValueTF{#1}%
        {\newtarget{OFF_global}{\Deps}}%
        {\newtarget{OFF_global}{#1}}%
}
\NewDocumentCommand{\DFAMILY}{s}{%
    \providecommand*{\DFAMILYtag}{}%
    \IfBooleanTF{#1}
    {\renewcommand*{\DFAMILYtag}{\graffe*{{}D_{\eps}}_{\eps>0}}}%
    {\renewcommand*{\DFAMILYtag}{\newlink{OFF_global}{\graffe*{{}D_{\eps}}_{\eps>0}}}}%
    \DFAMILYtag%
}
\NewDocumentCommand{\TARGETDFAMILY}{o}%
{%
    \IfNoValueTF{#1}%
        {\newtarget{OFF_global}{\DFAMILY}}%
        {\newtarget{OFF_global}{#1}}%
}
\NewDocumentCommand{\DistributionalDerivativeSymbol}{s}{%
    \providecommand*{\DistributionalDerivativeSymboltag}{}%
    \IfBooleanTF{#1}
    {\renewcommand*{\DistributionalDerivativeSymboltag}{D}}%
    {\renewcommand*{\DistributionalDerivativeSymboltag}{\newlink{DistributionalDerivativeSymbol_global}{D}}}%
    \DistributionalDerivativeSymboltag%
}
\NewDocumentCommand{\TARGETDistributionalDerivativeSymbol}{o}%
{%
    \IfNoValueTF{#1}%
        {\newtarget{DistributionalDerivativeSymbol_global}{\DistributionalDerivativeSymbol}}%
        {\newtarget{DistributionalDerivativeSymbol_global}{#1}}%
}
\NewDocumentCommand{\E}{s}{%
    \providecommand*{\Etag}{}%
    \IfBooleanTF{#1}
    {\renewcommand*{\Etag}{E}}%
    {\renewcommand*{\Etag}{\newlink{OFF_global}{E}}}%
    \Etag%
}
\NewDocumentCommand{\TARGETE}{o}%
{%
    \IfNoValueTF{#1}%
        {\newtarget{OFF_global}{\E}}%
        {\newtarget{OFF_global}{#1}}%
}
\NewDocumentCommand{\epsr}{s}{%
    \providecommand*{\epsrtag}{}%
    \IfBooleanTF{#1}
    {\renewcommand*{\epsrtag}{\varepsilon_{\r}}}%
    {\renewcommand*{\epsrtag}{\newlink{OFF_global}{\varepsilon_{\r}}}}%
    \epsrtag%
}
\NewDocumentCommand{\TARGETepsr}{o}%
{%
    \IfNoValueTF{#1}%
        {\newtarget{OFF_global}{\epsr}}%
        {\newtarget{OFF_global}{#1}}%
}
\NewDocumentCommand{\epszero}{s}{%
    \providecommand*{\epszerotag}{}%
    \IfBooleanTF{#1}
    {\renewcommand*{\epszerotag}{\varepsilon_{0}}}%
    {\renewcommand*{\epszerotag}{\newlink{OFF_global}{\varepsilon_{0}}}}%
    \epszerotag%
}
\NewDocumentCommand{\TARGETepszero}{o}%
{%
    \IfNoValueTF{#1}%
        {\newtarget{OFF_global}{\epszero}}%
        {\newtarget{OFF_global}{#1}}%
}
\NewDocumentCommand{\f}{s o}{%
    \providecommand*{\ftag}{}%
    \IfBooleanTF{#1}
    {\renewcommand*{\ftag}{f}}%
    {\renewcommand*{\ftag}{\newlink{OFF_global}{f}}}%
    \IfNoValueTF{#2}%
    {\ftag}%
    {%
        {\ftag}\tonde*{#2}%
    }%
}
\NewDocumentCommand{\TARGETf}{o}%
{%
    \IfNoValueTF{#1}%
        {\newtarget{OFF_global}{\f}}%
        {\newtarget{OFF_global}{#1}}%
}
\NewDocumentCommand{\F}{s}{%
    \providecommand*{\Ftag}{}%
    \IfBooleanTF{#1}
    {\renewcommand*{\Ftag}{F}}%
    {\renewcommand*{\Ftag}{\newlink{OFF_global}{F}}}%
    \Ftag%
}
\NewDocumentCommand{\TARGETF}{o}%
{%
    \IfNoValueTF{#1}%
        {\newtarget{OFF_global}{\F}}%
        {\newtarget{OFF_global}{#1}}%
}
\NewDocumentCommand{\ForwardDifferenceOperatorSymbol}{s}{%
    \providecommand*{\ForwardDifferenceOperatorSymboltag}{}%
    \IfBooleanTF{#1}
    {\renewcommand*{\ForwardDifferenceOperatorSymboltag}{\OLDDelta}}%
    {\renewcommand*{\ForwardDifferenceOperatorSymboltag}{\newlink{ForwardDifferenceOperatorSymbol_global}{\OLDDelta}}}%
    \ForwardDifferenceOperatorSymboltag%
}
\NewDocumentCommand{\TARGETForwardDifferenceOperatorSymbol}{o}%
{%
    \IfNoValueTF{#1}%
        {\newtarget{ForwardDifferenceOperatorSymbol_global}{\ForwardDifferenceOperatorSymbol}}%
        {\newtarget{ForwardDifferenceOperatorSymbol_global}{#1}}%
}
\NewDocumentCommand{\fr}{s o}{%
    \providecommand*{\frtag}{}%
    \IfBooleanTF{#1}
    {\renewcommand*{\frtag}{f_{\r}}}%
    {\renewcommand*{\frtag}{\newlink{OFF_global}{f_{\r}}}}%
    \IfNoValueTF{#2}%
    {\frtag}%
    {%
        {\frtag}\tonde*{#2}%
    }%
}
\NewDocumentCommand{\TARGETfr}{o}%
{%
    \IfNoValueTF{#1}%
        {\newtarget{OFF_global}{\fr}}%
        {\newtarget{OFF_global}{#1}}%
}
\NewDocumentCommand{\G}{s o}{%
    \providecommand*{\Gtag}{}%
    \IfBooleanTF{#1}
    {\renewcommand*{\Gtag}{G}}%
    {\renewcommand*{\Gtag}{\newlink{G_global}{G}}}%
    \IfNoValueTF{#2}%
    {\Gtag}%
    {%
        {\Gtag}\tonde*{#2}%
    }%
}
\NewDocumentCommand{\TARGETG}{o}%
{%
    \IfNoValueTF{#1}%
        {\newtarget{G_global}{\G}}%
        {\newtarget{G_global}{#1}}%
}
\NewDocumentCommand{\gamma}{s}{%
    \providecommand*{\gammatag}{}%
    \IfBooleanTF{#1}
    {\renewcommand*{\gammatag}{\OLDgamma}}%
    {\renewcommand*{\gammatag}{\newlink{OFF_global}{\OLDgamma}}}%
    \gammatag%
}
\NewDocumentCommand{\TARGETgamma}{o}%
{%
    \IfNoValueTF{#1}%
        {\newtarget{OFF_global}{\gamma}}%
        {\newtarget{OFF_global}{#1}}%
}
\NewDocumentCommand{\GradientVectorSymbol}{s}{%
    \providecommand*{\GradientVectorSymboltag}{}%
    \IfBooleanTF{#1}
    {\renewcommand*{\GradientVectorSymboltag}{\nabla}}%
    {\renewcommand*{\GradientVectorSymboltag}{\newlink{GradientVectorSymbol_global}{\nabla}}}%
    \GradientVectorSymboltag%
}
\NewDocumentCommand{\TARGETGradientVectorSymbol}{o}%
{%
    \IfNoValueTF{#1}%
        {\newtarget{GradientVectorSymbol_global}{\GradientVectorSymbol}}%
        {\newtarget{GradientVectorSymbol_global}{#1}}%
}
\NewDocumentCommand{\h}{s o}{%
    \providecommand*{\htag}{}%
    \IfBooleanTF{#1}
    {\renewcommand*{\htag}{h}}%
    {\renewcommand*{\htag}{\newlink{OFF_global}{h}}}%
    \IfNoValueTF{#2}%
    {\htag}%
    {%
        {\htag}_{#2}%
    }%
}
\NewDocumentCommand{\TARGETh}{o}%
{%
    \IfNoValueTF{#1}%
        {\newtarget{OFF_global}{\h}}%
        {\newtarget{OFF_global}{#1}}%
}
\NewDocumentCommand{\H}{s o}{%
    \providecommand*{\Htag}{}%
    \IfBooleanTF{#1}
    {\renewcommand*{\Htag}{H}}%
    {\renewcommand*{\Htag}{\newlink{H_global}{H}}}%
    \IfNoValueTF{#2}%
    {\Htag}%
    {%
        {\Htag}\tonde*{#2}%
    }%
}
\NewDocumentCommand{\TARGETH}{o}%
{%
    \IfNoValueTF{#1}%
        {\newtarget{H_global}{\H}}%
        {\newtarget{H_global}{#1}}%
}
\NewDocumentCommand{\HausdorffMeasureSymbol}{s}{%
    \providecommand*{\HausdorffMeasureSymboltag}{}%
    \IfBooleanTF{#1}
    {\renewcommand*{\HausdorffMeasureSymboltag}{\mathcal{H}}}%
    {\renewcommand*{\HausdorffMeasureSymboltag}{\newlink{OFF_global}{\mathcal{H}}}}%
    \HausdorffMeasureSymboltag%
}
\NewDocumentCommand{\TARGETHausdorffMeasureSymbol}{o}%
{%
    \IfNoValueTF{#1}%
        {\newtarget{OFF_global}{\HausdorffMeasureSymbol}}%
        {\newtarget{OFF_global}{#1}}%
}
\NewDocumentCommand{\HKbdryBr}{s o}{%
    \providecommand*{\HKbdryBrtag}{}%
    \IfBooleanTF{#1}
    {\renewcommand*{\HKbdryBrtag}{\NonlocalCurvatureSymbol_{\K,\bdry\Br}}}%
    {\renewcommand*{\HKbdryBrtag}{\newlink{OFF_global}{\NonlocalCurvatureSymbol_{\K,\bdry\Br}}}}%
    \IfNoValueTF{#2}%
    {\HKbdryBrtag}%
    {%
        {\HKbdryBrtag}\tonde*{#2}%
    }%
}
\NewDocumentCommand{\TARGETHKbdryBr}{o}%
{%
    \IfNoValueTF{#1}%
        {\newtarget{OFF_global}{\HKbdryBr}}%
        {\newtarget{OFF_global}{#1}}%
}
\NewDocumentCommand{\HKbdryE}{s o}{%
    \providecommand*{\HKbdryEtag}{}%
    \IfBooleanTF{#1}
    {\renewcommand*{\HKbdryEtag}{\NonlocalCurvatureSymbol_{\K,\bdry\E}}}%
    {\renewcommand*{\HKbdryEtag}{\newlink{OFF_global}{\NonlocalCurvatureSymbol_{\K,\bdry\E}}}}%
    \IfNoValueTF{#2}%
    {\HKbdryEtag}%
    {%
        {\HKbdryEtag}\tonde*{#2}%
    }%
}
\NewDocumentCommand{\TARGETHKbdryE}{o}%
{%
    \IfNoValueTF{#1}%
        {\newtarget{OFF_global}{\HKbdryE}}%
        {\newtarget{OFF_global}{#1}}%
}
\NewDocumentCommand{\HKepsbdryBr}{s o}{%
    \providecommand*{\HKepsbdryBrtag}{}%
    \IfBooleanTF{#1}
    {\renewcommand*{\HKepsbdryBrtag}{\NonlocalCurvatureSymbol_{\Keps,\bdry\Br}}}%
    {\renewcommand*{\HKepsbdryBrtag}{\newlink{OFF_global}{\NonlocalCurvatureSymbol_{\Keps,\bdry\Br}}}}%
    \IfNoValueTF{#2}%
    {\HKepsbdryBrtag}%
    {%
        {\HKepsbdryBrtag}\tonde*{#2}%
    }%
}
\NewDocumentCommand{\TARGETHKepsbdryBr}{o}%
{%
    \IfNoValueTF{#1}%
        {\newtarget{OFF_global}{\HKepsbdryBr}}%
        {\newtarget{OFF_global}{#1}}%
}
\NewDocumentCommand{\HKepsbdryE}{s o}{%
    \providecommand*{\HKepsbdryEtag}{}%
    \IfBooleanTF{#1}
    {\renewcommand*{\HKepsbdryEtag}{\NonlocalCurvatureSymbol_{\Keps,\bdry\E}}}%
    {\renewcommand*{\HKepsbdryEtag}{\newlink{OFF_global}{\NonlocalCurvatureSymbol_{\Keps,\bdry\E}}}}%
    \IfNoValueTF{#2}%
    {\HKepsbdryEtag}%
    {%
        {\HKepsbdryEtag}\tonde*{#2}%
    }%
}
\NewDocumentCommand{\TARGETHKepsbdryE}{o}%
{%
    \IfNoValueTF{#1}%
        {\newtarget{OFF_global}{\HKepsbdryE}}%
        {\newtarget{OFF_global}{#1}}%
}
\NewDocumentCommand{\Hmeno}{s}{%
    \providecommand*{\Hmenotag}{}%
    \IfBooleanTF{#1}
    {\renewcommand*{\Hmenotag}{H^{-}}}%
    {\renewcommand*{\Hmenotag}{\newlink{OFF_global}{H^{-}}}}%
    \Hmenotag%
}
\NewDocumentCommand{\TARGETHmeno}{o}%
{%
    \IfNoValueTF{#1}%
        {\newtarget{OFF_global}{\Hmeno}}%
        {\newtarget{OFF_global}{#1}}%
}
\NewDocumentCommand{\Hpiu}{s}{%
    \providecommand*{\Hpiutag}{}%
    \IfBooleanTF{#1}
    {\renewcommand*{\Hpiutag}{H^{+}}}%
    {\renewcommand*{\Hpiutag}{\newlink{OFF_global}{H^{+}}}}%
    \Hpiutag%
}
\NewDocumentCommand{\TARGETHpiu}{o}%
{%
    \IfNoValueTF{#1}%
        {\newtarget{OFF_global}{\Hpiu}}%
        {\newtarget{OFF_global}{#1}}%
}
\NewDocumentCommand{\hzero}{s o}{%
    \providecommand*{\hzerotag}{}%
    \IfBooleanTF{#1}
    {\renewcommand*{\hzerotag}{h_{0}}}%
    {\renewcommand*{\hzerotag}{\newlink{OFF_global}{h_{0}}}}%
    \IfNoValueTF{#2}%
    {\hzerotag}%
    {%
        {\hzerotag}_{#2}%
    }%
}
\NewDocumentCommand{\TARGEThzero}{o}%
{%
    \IfNoValueTF{#1}%
        {\newtarget{OFF_global}{\hzero}}%
        {\newtarget{OFF_global}{#1}}%
}
\NewDocumentCommand{\IG}{s o}{%
    \providecommand*{\IGtag}{}%
    \IfBooleanTF{#1}
    {\renewcommand*{\IGtag}{\mathcal{I}_{\G}}}%
    {\renewcommand*{\IGtag}{\newlink{IG_global}{\mathcal{I}_{\G}}}}%
    \IfNoValueTF{#2}%
    {\IGtag}%
    {%
        {\IGtag}\tonde*{#2}%
    }%
}
\NewDocumentCommand{\TARGETIG}{o}%
{%
    \IfNoValueTF{#1}%
        {\newtarget{IG_global}{\IG}}%
        {\newtarget{IG_global}{#1}}%
}
\NewDocumentCommand{\IK}{s o}{%
    \providecommand*{\IKtag}{}%
    \IfBooleanTF{#1}
    {\renewcommand*{\IKtag}{\mathcal{I}_{\K}}}%
    {\renewcommand*{\IKtag}{\newlink{IK_global}{\mathcal{I}_{\K}}}}%
    \IfNoValueTF{#2}%
    {\IKtag}%
    {%
        {\IKtag}\tonde*{#2}%
    }%
}
\NewDocumentCommand{\TARGETIK}{o}%
{%
    \IfNoValueTF{#1}%
        {\newtarget{IK_global}{\IK}}%
        {\newtarget{IK_global}{#1}}%
}
\NewDocumentCommand{\IKsym}{s o}{%
    \providecommand*{\IKsymtag}{}%
    \IfBooleanTF{#1}
    {\renewcommand*{\IKsymtag}{\mathcal{I}_{\Ksym}}}%
    {\renewcommand*{\IKsymtag}{\newlink{IKsym_global}{\mathcal{I}_{\Ksym}}}}%
    \IfNoValueTF{#2}%
    {\IKsymtag}%
    {%
        {\IKsymtag}\tonde*{#2}%
    }%
}
\NewDocumentCommand{\TARGETIKsym}{o}%
{%
    \IfNoValueTF{#1}%
        {\newtarget{IKsym_global}{\IKsym}}%
        {\newtarget{IKsym_global}{#1}}%
}
\NewDocumentCommand{\IntegerNumbersSymbol}{s}{%
    \providecommand*{\IntegerNumbersSymboltag}{}%
    \IfBooleanTF{#1}
    {\renewcommand*{\IntegerNumbersSymboltag}{\mathbb{Z}}}%
    {\renewcommand*{\IntegerNumbersSymboltag}{\newlink{IntegerNumbersSymbol_global}{\mathbb{Z}}}}%
    \IntegerNumbersSymboltag%
}
\NewDocumentCommand{\TARGETIntegerNumbersSymbol}{o}%
{%
    \IfNoValueTF{#1}%
        {\newtarget{IntegerNumbersSymbol_global}{\IntegerNumbersSymbol}}%
        {\newtarget{IntegerNumbersSymbol_global}{#1}}%
}
\NewDocumentCommand{\JacobianMatrixSymbol}{s}{%
    \providecommand*{\JacobianMatrixSymboltag}{}%
    \IfBooleanTF{#1}
    {\renewcommand*{\JacobianMatrixSymboltag}{D}}%
    {\renewcommand*{\JacobianMatrixSymboltag}{\newlink{JacobianMatrixSymbol_global}{D}}}%
    \JacobianMatrixSymboltag%
}
\NewDocumentCommand{\TARGETJacobianMatrixSymbol}{o}%
{%
    \IfNoValueTF{#1}%
        {\newtarget{JacobianMatrixSymbol_global}{\JacobianMatrixSymbol}}%
        {\newtarget{JacobianMatrixSymbol_global}{#1}}%
}
\NewDocumentCommand{\k}{s}{%
    \providecommand*{\ktag}{}%
    \IfBooleanTF{#1}
    {\renewcommand*{\ktag}{k}}%
    {\renewcommand*{\ktag}{\newlink{OFF_global}{k}}}%
    \ktag%
}
\NewDocumentCommand{\TARGETk}{o}%
{%
    \IfNoValueTF{#1}%
        {\newtarget{OFF_global}{\k}}%
        {\newtarget{OFF_global}{#1}}%
}
\NewDocumentCommand{\K}{s o}{%
    \providecommand*{\Ktag}{}%
    \IfBooleanTF{#1}
    {\renewcommand*{\Ktag}{K}}%
    {\renewcommand*{\Ktag}{\newlink{K_global}{K}}}%
    \IfNoValueTF{#2}%
    {\Ktag}%
    {%
        {\Ktag}\tonde*{#2}%
    }%
}
\NewDocumentCommand{\TARGETK}{o}%
{%
    \IfNoValueTF{#1}%
        {\newtarget{K_global}{\K}}%
        {\newtarget{K_global}{#1}}%
}
\NewDocumentCommand{\Kdelta}{s o}{%
    \providecommand*{\Kdeltatag}{}%
    \IfBooleanTF{#1}
    {\renewcommand*{\Kdeltatag}{K_{\delta}}}%
    {\renewcommand*{\Kdeltatag}{\newlink{Kdelta_global}{K_{\delta}}}}%
    \IfNoValueTF{#2}%
    {\Kdeltatag}%
    {%
        {\Kdeltatag}\tonde*{#2}%
    }%
}
\NewDocumentCommand{\TARGETKdelta}{o}%
{%
    \IfNoValueTF{#1}%
        {\newtarget{Kdelta_global}{\Kdelta}}%
        {\newtarget{Kdelta_global}{#1}}%
}
\NewDocumentCommand{\Keps}{s o}{%
    \providecommand*{\Kepstag}{}%
    \IfBooleanTF{#1}
    {\renewcommand*{\Kepstag}{K_{\eps}}}%
    {\renewcommand*{\Kepstag}{\newlink{OFF_global}{K_{\eps}}}}%
    \IfNoValueTF{#2}%
    {\Kepstag}%
    {%
        {\Kepstag}\tonde*{#2}%
    }%
}
\NewDocumentCommand{\TARGETKeps}{o}%
{%
    \IfNoValueTF{#1}%
        {\newtarget{OFF_global}{\Keps}}%
        {\newtarget{OFF_global}{#1}}%
}
\NewDocumentCommand{\KFAMILY}{s}{%
    \providecommand*{\KFAMILYtag}{}%
    \IfBooleanTF{#1}
    {\renewcommand*{\KFAMILYtag}{\graffe*{{}K_{\eps}}_{\eps>0}}}%
    {\renewcommand*{\KFAMILYtag}{\newlink{OFF_global}{\graffe*{{}K_{\eps}}_{\eps>0}}}}%
    \KFAMILYtag%
}
\NewDocumentCommand{\TARGETKFAMILY}{o}%
{%
    \IfNoValueTF{#1}%
        {\newtarget{OFF_global}{\KFAMILY}}%
        {\newtarget{OFF_global}{#1}}%
}
\NewDocumentCommand{\Kk}{s o}{%
    \providecommand*{\Kktag}{}%
    \IfBooleanTF{#1}
    {\renewcommand*{\Kktag}{K_{\k}}}%
    {\renewcommand*{\Kktag}{\newlink{OFF_global}{K_{\k}}}}%
    \IfNoValueTF{#2}%
    {\Kktag}%
    {%
        {\Kktag}\tonde*{#2}%
    }%
}
\NewDocumentCommand{\TARGETKk}{o}%
{%
    \IfNoValueTF{#1}%
        {\newtarget{OFF_global}{\Kk}}%
        {\newtarget{OFF_global}{#1}}%
}
\NewDocumentCommand{\Kn}{s o}{%
    \providecommand*{\Kntag}{}%
    \IfBooleanTF{#1}
    {\renewcommand*{\Kntag}{K_{\n}}}%
    {\renewcommand*{\Kntag}{\newlink{OFF_global}{K_{\n}}}}%
    \IfNoValueTF{#2}%
    {\Kntag}%
    {%
        {\Kntag}\tonde*{#2}%
    }%
}
\NewDocumentCommand{\TARGETKn}{o}%
{%
    \IfNoValueTF{#1}%
        {\newtarget{OFF_global}{\Kn}}%
        {\newtarget{OFF_global}{#1}}%
}
\NewDocumentCommand{\KpiuKsymMezzi}{s o}{%
    \providecommand*{\KpiuKsymMezzitag}{}%
    \IfBooleanTF{#1}
    {\renewcommand*{\KpiuKsymMezzitag}{\tonde{\K+\Ksym}/2}}%
    {\renewcommand*{\KpiuKsymMezzitag}{\newlink{KpiuKsymMezzi_global}{\tonde{\K+\Ksym}/2}}}%
    \IfNoValueTF{#2}%
    {\KpiuKsymMezzitag}%
    {%
        {\KpiuKsymMezzitag}\tonde*{#2}%
    }%
}
\NewDocumentCommand{\TARGETKpiuKsymMezzi}{o}%
{%
    \IfNoValueTF{#1}%
        {\newtarget{KpiuKsymMezzi_global}{\KpiuKsymMezzi}}%
        {\newtarget{KpiuKsymMezzi_global}{#1}}%
}
\NewDocumentCommand{\Ks}{s o}{%
    \providecommand*{\Kstag}{}%
    \IfBooleanTF{#1}
    {\renewcommand*{\Kstag}{K_{\s}}}%
    {\renewcommand*{\Kstag}{\newlink{Ks_global}{K_{\s}}}}%
    \IfNoValueTF{#2}%
    {\Kstag}%
    {%
        {\Kstag}\tonde*{#2}%
    }%
}
\NewDocumentCommand{\TARGETKs}{o}%
{%
    \IfNoValueTF{#1}%
        {\newtarget{Ks_global}{\Ks}}%
        {\newtarget{Ks_global}{#1}}%
}
\NewDocumentCommand{\KSEQUENCE}{s}{%
    \providecommand*{\KSEQUENCEtag}{}%
    \IfBooleanTF{#1}
    {\renewcommand*{\KSEQUENCEtag}{\graffe*{{{}K}_{\k}}_{\k\in\N}}}%
    {\renewcommand*{\KSEQUENCEtag}{\newlink{OFF_global}{\graffe*{{{}K}_{\k}}_{\k\in\N}}}}%
    \KSEQUENCEtag%
}
\NewDocumentCommand{\TARGETKSEQUENCE}{o}%
{%
    \IfNoValueTF{#1}%
        {\newtarget{OFF_global}{\KSEQUENCE}}%
        {\newtarget{OFF_global}{#1}}%
}
\NewDocumentCommand{\Ksym}{s o}{%
    \providecommand*{\Ksymtag}{}%
    \IfBooleanTF{#1}
    {\renewcommand*{\Ksymtag}{K_{\text{sym}}}}%
    {\renewcommand*{\Ksymtag}{\newlink{Ksym_global}{K_{\text{sym}}}}}%
    \IfNoValueTF{#2}%
    {\Ksymtag}%
    {%
        {\Ksymtag}\tonde*{#2}%
    }%
}
\NewDocumentCommand{\TARGETKsym}{o}%
{%
    \IfNoValueTF{#1}%
        {\newtarget{Ksym_global}{\Ksym}}%
        {\newtarget{Ksym_global}{#1}}%
}
\NewDocumentCommand{\lambda}{s}{%
    \providecommand*{\lambdatag}{}%
    \IfBooleanTF{#1}
    {\renewcommand*{\lambdatag}{\OLDlambda}}%
    {\renewcommand*{\lambdatag}{\newlink{OFF_global}{\OLDlambda}}}%
    \lambdatag%
}
\NewDocumentCommand{\TARGETlambda}{o}%
{%
    \IfNoValueTF{#1}%
        {\newtarget{OFF_global}{\lambda}}%
        {\newtarget{OFF_global}{#1}}%
}
\NewDocumentCommand{\LebesgueSpacelocSymbol}{s}{%
    \providecommand*{\LebesgueSpacelocSymboltag}{}%
    \IfBooleanTF{#1}
    {\renewcommand*{\LebesgueSpacelocSymboltag}{{\LebesgueSpaceSymbol}_{\loc}}}%
    {\renewcommand*{\LebesgueSpacelocSymboltag}{\newlink{OFF_global}{{\LebesgueSpaceSymbol}_{\loc}}}}%
    \LebesgueSpacelocSymboltag%
}
\NewDocumentCommand{\TARGETLebesgueSpacelocSymbol}{o}%
{%
    \IfNoValueTF{#1}%
        {\newtarget{OFF_global}{\LebesgueSpacelocSymbol}}%
        {\newtarget{OFF_global}{#1}}%
}
\NewDocumentCommand{\LebesgueSpaceSymbol}{s}{%
    \providecommand*{\LebesgueSpaceSymboltag}{}%
    \IfBooleanTF{#1}
    {\renewcommand*{\LebesgueSpaceSymboltag}{L}}%
    {\renewcommand*{\LebesgueSpaceSymboltag}{\newlink{OFF_global}{L}}}%
    \LebesgueSpaceSymboltag%
}
\NewDocumentCommand{\TARGETLebesgueSpaceSymbol}{o}%
{%
    \IfNoValueTF{#1}%
        {\newtarget{OFF_global}{\LebesgueSpaceSymbol}}%
        {\newtarget{OFF_global}{#1}}%
}
\NewDocumentCommand{\LipschitzSymbol}{s}{%
    \providecommand*{\LipschitzSymboltag}{}%
    \IfBooleanTF{#1}
    {\renewcommand*{\LipschitzSymboltag}{\operatorname{Lip}}}%
    {\renewcommand*{\LipschitzSymboltag}{\newlink{LipschitzSymbol_global}{\operatorname{Lip}}}}%
    \LipschitzSymboltag%
}
\NewDocumentCommand{\TARGETLipschitzSymbol}{o}%
{%
    \IfNoValueTF{#1}%
        {\newtarget{LipschitzSymbol_global}{\LipschitzSymbol}}%
        {\newtarget{LipschitzSymbol_global}{#1}}%
}
\NewDocumentCommand{\m}{s}{%
    \providecommand*{\mtag}{}%
    \IfBooleanTF{#1}
    {\renewcommand*{\mtag}{m}}%
    {\renewcommand*{\mtag}{\newlink{OFF_global}{m}}}%
    \mtag%
}
\NewDocumentCommand{\TARGETm}{o}%
{%
    \IfNoValueTF{#1}%
        {\newtarget{OFF_global}{\m}}%
        {\newtarget{OFF_global}{#1}}%
}
\NewDocumentCommand{\M}{s}{%
    \providecommand*{\Mtag}{}%
    \IfBooleanTF{#1}
    {\renewcommand*{\Mtag}{M}}%
    {\renewcommand*{\Mtag}{\newlink{OFF_global}{M}}}%
    \Mtag%
}
\NewDocumentCommand{\TARGETM}{o}%
{%
    \IfNoValueTF{#1}%
        {\newtarget{OFF_global}{\M}}%
        {\newtarget{OFF_global}{#1}}%
}
\NewDocumentCommand{\ModulusOfDeterminantOFJacobianMatrixSymbol}{s}{%
    \providecommand*{\ModulusOfDeterminantOFJacobianMatrixSymboltag}{}%
    \IfBooleanTF{#1}
    {\renewcommand*{\ModulusOfDeterminantOFJacobianMatrixSymboltag}{J}}%
    {\renewcommand*{\ModulusOfDeterminantOFJacobianMatrixSymboltag}{\newlink{ModulusOfDeterminantOFJacobianMatrixSymbol_global}{J}}}%
    \ModulusOfDeterminantOFJacobianMatrixSymboltag%
}
\NewDocumentCommand{\TARGETModulusOfDeterminantOFJacobianMatrixSymbol}{o}%
{%
    \IfNoValueTF{#1}%
        {\newtarget{ModulusOfDeterminantOFJacobianMatrixSymbol_global}{\ModulusOfDeterminantOFJacobianMatrixSymbol}}%
        {\newtarget{ModulusOfDeterminantOFJacobianMatrixSymbol_global}{#1}}%
}
\NewDocumentCommand{\mr}{s}{%
    \providecommand*{\mrtag}{}%
    \IfBooleanTF{#1}
    {\renewcommand*{\mrtag}{\m_{\r}}}%
    {\renewcommand*{\mrtag}{\newlink{OFF_global}{\m_{\r}}}}%
    \mrtag%
}
\NewDocumentCommand{\TARGETmr}{o}%
{%
    \IfNoValueTF{#1}%
        {\newtarget{OFF_global}{\mr}}%
        {\newtarget{OFF_global}{#1}}%
}
\NewDocumentCommand{\n}{s}{%
    \providecommand*{\ntag}{}%
    \IfBooleanTF{#1}
    {\renewcommand*{\ntag}{n}}%
    {\renewcommand*{\ntag}{\newlink{OFF_global}{n}}}%
    \ntag%
}
\NewDocumentCommand{\TARGETn}{o}%
{%
    \IfNoValueTF{#1}%
        {\newtarget{OFF_global}{\n}}%
        {\newtarget{OFF_global}{#1}}%
}
\NewDocumentCommand{\NaturalNumbersSymbol}{s}{%
    \providecommand*{\NaturalNumbersSymboltag}{}%
    \IfBooleanTF{#1}
    {\renewcommand*{\NaturalNumbersSymboltag}{\mathbb{N}}}%
    {\renewcommand*{\NaturalNumbersSymboltag}{\newlink{NaturalNumbersSymbol_global}{\mathbb{N}}}}%
    \NaturalNumbersSymboltag%
}
\NewDocumentCommand{\TARGETNaturalNumbersSymbol}{o}%
{%
    \IfNoValueTF{#1}%
        {\newtarget{NaturalNumbersSymbol_global}{\NaturalNumbersSymbol}}%
        {\newtarget{NaturalNumbersSymbol_global}{#1}}%
}
\NewDocumentCommand{\NonlocalCurvatureSymbol}{s}{%
    \providecommand*{\NonlocalCurvatureSymboltag}{}%
    \IfBooleanTF{#1}
    {\renewcommand*{\NonlocalCurvatureSymboltag}{H}}%
    {\renewcommand*{\NonlocalCurvatureSymboltag}{\newlink{NonlocalCurvatureSymbol_global}{H}}}%
    \NonlocalCurvatureSymboltag%
}
\NewDocumentCommand{\TARGETNonlocalCurvatureSymbol}{o}%
{%
    \IfNoValueTF{#1}%
        {\newtarget{NonlocalCurvatureSymbol_global}{\NonlocalCurvatureSymbol}}%
        {\newtarget{NonlocalCurvatureSymbol_global}{#1}}%
}
\NewDocumentCommand{\NonlocalFunctionalSymbol}{s}{%
    \providecommand*{\NonlocalFunctionalSymboltag}{}%
    \IfBooleanTF{#1}
    {\renewcommand*{\NonlocalFunctionalSymboltag}{\mathcal{V}}}%
    {\renewcommand*{\NonlocalFunctionalSymboltag}{\newlink{NonlocalFunctionalSymbol_global}{\mathcal{V}}}}%
    \NonlocalFunctionalSymboltag%
}
\NewDocumentCommand{\TARGETNonlocalFunctionalSymbol}{o}%
{%
    \IfNoValueTF{#1}%
        {\newtarget{NonlocalFunctionalSymbol_global}{\NonlocalFunctionalSymbol}}%
        {\newtarget{NonlocalFunctionalSymbol_global}{#1}}%
}
\NewDocumentCommand{\NonlocalIsoPBSymbol}{s}{%
    \providecommand*{\NonlocalIsoPBSymboltag}{}%
    \IfBooleanTF{#1}
    {\renewcommand*{\NonlocalIsoPBSymboltag}{P}}%
    {\renewcommand*{\NonlocalIsoPBSymboltag}{\newlink{OFF_global}{P}}}%
    \NonlocalIsoPBSymboltag%
}
\NewDocumentCommand{\TARGETNonlocalIsoPBSymbol}{o}%
{%
    \IfNoValueTF{#1}%
        {\newtarget{OFF_global}{\NonlocalIsoPBSymbol}}%
        {\newtarget{OFF_global}{#1}}%
}
\NewDocumentCommand{\NonlocalMaxPBSymbol}{s}{%
    \providecommand*{\NonlocalMaxPBSymboltag}{}%
    \IfBooleanTF{#1}
    {\renewcommand*{\NonlocalMaxPBSymboltag}{P'}}%
    {\renewcommand*{\NonlocalMaxPBSymboltag}{\newlink{OFF_global}{P'}}}%
    \NonlocalMaxPBSymboltag%
}
\NewDocumentCommand{\TARGETNonlocalMaxPBSymbol}{o}%
{%
    \IfNoValueTF{#1}%
        {\newtarget{OFF_global}{\NonlocalMaxPBSymbol}}%
        {\newtarget{OFF_global}{#1}}%
}
\NewDocumentCommand{\NonlocalPerimeterSymbol}{s}{%
    \providecommand*{\NonlocalPerimeterSymboltag}{}%
    \IfBooleanTF{#1}
    {\renewcommand*{\NonlocalPerimeterSymboltag}{\mathcal{P}}}%
    {\renewcommand*{\NonlocalPerimeterSymboltag}{\newlink{NonlocalPerimeterSymbol_global}{\mathcal{P}}}}%
    \NonlocalPerimeterSymboltag%
}
\NewDocumentCommand{\TARGETNonlocalPerimeterSymbol}{o}%
{%
    \IfNoValueTF{#1}%
        {\newtarget{NonlocalPerimeterSymbol_global}{\NonlocalPerimeterSymbol}}%
        {\newtarget{NonlocalPerimeterSymbol_global}{#1}}%
}
\NewDocumentCommand{\nuBr}{s o}{%
    \providecommand*{\nuBrtag}{}%
    \IfBooleanTF{#1}
    {\renewcommand*{\nuBrtag}{\OLDnu_{\Br}}}%
    {\renewcommand*{\nuBrtag}{\newlink{OFF_global}{\OLDnu_{\Br}}}}%
    \IfNoValueTF{#2}%
    {\nuBrtag}%
    {%
        {\nuBrtag}\tonde*{#2}%
    }%
}
\NewDocumentCommand{\TARGETnuBr}{o}%
{%
    \IfNoValueTF{#1}%
        {\newtarget{OFF_global}{\nuBr}}%
        {\newtarget{OFF_global}{#1}}%
}
\NewDocumentCommand{\Om}{s}{%
    \providecommand*{\Omtag}{}%
    \IfBooleanTF{#1}
    {\renewcommand*{\Omtag}{\Omega}}%
    {\renewcommand*{\Omtag}{\newlink{OFF_global}{\Omega}}}%
    \Omtag%
}
\NewDocumentCommand{\TARGETOm}{o}%
{%
    \IfNoValueTF{#1}%
        {\newtarget{OFF_global}{\Om}}%
        {\newtarget{OFF_global}{#1}}%
}
\NewDocumentCommand{\OmDUE}{s}{%
    \providecommand*{\OmDUEtag}{}%
    \IfBooleanTF{#1}
    {\renewcommand*{\OmDUEtag}{\Om_{2}}}%
    {\renewcommand*{\OmDUEtag}{\newlink{OFF_global}{\Om_{2}}}}%
    \OmDUEtag%
}
\NewDocumentCommand{\TARGETOmDUE}{o}%
{%
    \IfNoValueTF{#1}%
        {\newtarget{OFF_global}{\OmDUE}}%
        {\newtarget{OFF_global}{#1}}%
}
\NewDocumentCommand{\OmUNO}{s}{%
    \providecommand*{\OmUNOtag}{}%
    \IfBooleanTF{#1}
    {\renewcommand*{\OmUNOtag}{\Om_{1}}}%
    {\renewcommand*{\OmUNOtag}{\newlink{OFF_global}{\Om_{1}}}}%
    \OmUNOtag%
}
\NewDocumentCommand{\TARGETOmUNO}{o}%
{%
    \IfNoValueTF{#1}%
        {\newtarget{OFF_global}{\OmUNO}}%
        {\newtarget{OFF_global}{#1}}%
}
\NewDocumentCommand{\p}{s}{%
    \providecommand*{\ptag}{}%
    \IfBooleanTF{#1}
    {\renewcommand*{\ptag}{p}}%
    {\renewcommand*{\ptag}{\newlink{OFF_global}{p}}}%
    \ptag%
}
\NewDocumentCommand{\TARGETp}{o}%
{%
    \IfNoValueTF{#1}%
        {\newtarget{OFF_global}{\p}}%
        {\newtarget{OFF_global}{#1}}%
}
\NewDocumentCommand{\PartialDerivativeSymbol}{s}{%
    \providecommand*{\PartialDerivativeSymboltag}{}%
    \IfBooleanTF{#1}
    {\renewcommand*{\PartialDerivativeSymboltag}{\partial}}%
    {\renewcommand*{\PartialDerivativeSymboltag}{\newlink{OFF_global}{\partial}}}%
    \PartialDerivativeSymboltag%
}
\NewDocumentCommand{\TARGETPartialDerivativeSymbol}{o}%
{%
    \IfNoValueTF{#1}%
        {\newtarget{OFF_global}{\PartialDerivativeSymbol}}%
        {\newtarget{OFF_global}{#1}}%
}
\NewDocumentCommand{\phi}{s o}{%
    \providecommand*{\phitag}{}%
    \IfBooleanTF{#1}
    {\renewcommand*{\phitag}{\OLDphi}}%
    {\renewcommand*{\phitag}{\newlink{OFF_global}{\OLDphi}}}%
    \IfNoValueTF{#2}%
    {\phitag}%
    {%
        {\phitag}\tonde*{#2}%
    }%
}
\NewDocumentCommand{\TARGETphi}{o}%
{%
    \IfNoValueTF{#1}%
        {\newtarget{OFF_global}{\phi}}%
        {\newtarget{OFF_global}{#1}}%
}
\NewDocumentCommand{\PhiFAMILY}{s}{%
    \providecommand*{\PhiFAMILYtag}{}%
    \IfBooleanTF{#1}
    {\renewcommand*{\PhiFAMILYtag}{\graffe*{{}\OLDPhi_{\t}}_{\abs{\t}<\epszero}}}%
    {\renewcommand*{\PhiFAMILYtag}{\newlink{PhiFAMILY_global}{\graffe*{{}\OLDPhi_{\t}}_{\abs{\t}<\epszero}}}}%
    \PhiFAMILYtag%
}
\NewDocumentCommand{\TARGETPhiFAMILY}{o}%
{%
    \IfNoValueTF{#1}%
        {\newtarget{PhiFAMILY_global}{\PhiFAMILY}}%
        {\newtarget{PhiFAMILY_global}{#1}}%
}
\NewDocumentCommand{\PhiinvFAMILY}{s}{%
    \providecommand*{\PhiinvFAMILYtag}{}%
    \IfBooleanTF{#1}
    {\renewcommand*{\PhiinvFAMILYtag}{\graffe*{{}{\OLDPhi}^{-1}_{\t}}_{\abs{\t}<\epszero}}}%
    {\renewcommand*{\PhiinvFAMILYtag}{\newlink{PhiFAMILY_global}{\graffe*{{}{\OLDPhi}^{-1}_{\t}}_{\abs{\t}<\epszero}}}}%
    \PhiinvFAMILYtag%
}
\NewDocumentCommand{\TARGETPhiinvFAMILY}{o}%
{%
    \IfNoValueTF{#1}%
        {\newtarget{PhiFAMILY_global}{\PhiinvFAMILY}}%
        {\newtarget{PhiFAMILY_global}{#1}}%
}
\NewDocumentCommand{\Phiinvt}{s o}{%
    \providecommand*{\Phiinvttag}{}%
    \IfBooleanTF{#1}
    {\renewcommand*{\Phiinvttag}{{\OLDPhi}^{-1}_{\t}}}%
    {\renewcommand*{\Phiinvttag}{\newlink{PhiFAMILY_global}{{\OLDPhi}^{-1}_{\t}}}}%
    \IfNoValueTF{#2}%
    {\Phiinvttag}%
    {%
        {\Phiinvttag}\tonde*{#2}%
    }%
}
\NewDocumentCommand{\TARGETPhiinvt}{o}%
{%
    \IfNoValueTF{#1}%
        {\newtarget{PhiFAMILY_global}{\Phiinvt}}%
        {\newtarget{PhiFAMILY_global}{#1}}%
}
\NewDocumentCommand{\Phit}{s o}{%
    \providecommand*{\Phittag}{}%
    \IfBooleanTF{#1}
    {\renewcommand*{\Phittag}{\OLDPhi_{\t}}}%
    {\renewcommand*{\Phittag}{\newlink{PhiFAMILY_global}{\OLDPhi_{\t}}}}%
    \IfNoValueTF{#2}%
    {\Phittag}%
    {%
        {\Phittag}\tonde*{#2}%
    }%
}
\NewDocumentCommand{\TARGETPhit}{o}%
{%
    \IfNoValueTF{#1}%
        {\newtarget{PhiFAMILY_global}{\Phit}}%
        {\newtarget{PhiFAMILY_global}{#1}}%
}
\NewDocumentCommand{\pK}{s o}{%
    \providecommand*{\pKtag}{}%
    \IfBooleanTF{#1}
    {\renewcommand*{\pKtag}{p_{\K}}}%
    {\renewcommand*{\pKtag}{\newlink{OFF_global}{p_{\K}}}}%
    \IfNoValueTF{#2}%
    {\pKtag}%
    {%
        {\pKtag}\tonde*{#2}%
    }%
}
\NewDocumentCommand{\TARGETpK}{o}%
{%
    \IfNoValueTF{#1}%
        {\newtarget{OFF_global}{\pK}}%
        {\newtarget{OFF_global}{#1}}%
}
\NewDocumentCommand{\PK}{s o}{%
    \providecommand*{\PKtag}{}%
    \IfBooleanTF{#1}
    {\renewcommand*{\PKtag}{\NonlocalPerimeterSymbol_{\K}}}%
    {\renewcommand*{\PKtag}{\newlink{OFF_global}{\NonlocalPerimeterSymbol_{\K}}}}%
    \IfNoValueTF{#2}%
    {\PKtag}%
    {%
        {\PKtag}\tonde*{#2}%
    }%
}
\NewDocumentCommand{\TARGETPK}{o}%
{%
    \IfNoValueTF{#1}%
        {\newtarget{OFF_global}{\PK}}%
        {\newtarget{OFF_global}{#1}}%
}
\NewDocumentCommand{\PKk}{s o}{%
    \providecommand*{\PKktag}{}%
    \IfBooleanTF{#1}
    {\renewcommand*{\PKktag}{\NonlocalPerimeterSymbol_{\Kk}}}%
    {\renewcommand*{\PKktag}{\newlink{OFF_global}{\NonlocalPerimeterSymbol_{\Kk}}}}%
    \IfNoValueTF{#2}%
    {\PKktag}%
    {%
        {\PKktag}\tonde*{#2}%
    }%
}
\NewDocumentCommand{\TARGETPKk}{o}%
{%
    \IfNoValueTF{#1}%
        {\newtarget{OFF_global}{\PKk}}%
        {\newtarget{OFF_global}{#1}}%
}
\NewDocumentCommand{\PKsym}{s o}{%
    \providecommand*{\PKsymtag}{}%
    \IfBooleanTF{#1}
    {\renewcommand*{\PKsymtag}{\NonlocalPerimeterSymbol_{\Ksym}}}%
    {\renewcommand*{\PKsymtag}{\newlink{OFF_global}{\NonlocalPerimeterSymbol_{\Ksym}}}}%
    \IfNoValueTF{#2}%
    {\PKsymtag}%
    {%
        {\PKsymtag}\tonde*{#2}%
    }%
}
\NewDocumentCommand{\TARGETPKsym}{o}%
{%
    \IfNoValueTF{#1}%
        {\newtarget{OFF_global}{\PKsym}}%
        {\newtarget{OFF_global}{#1}}%
}
\NewDocumentCommand{\PKs}{s o}{%
    \providecommand*{\PKstag}{}%
    \IfBooleanTF{#1}
    {\renewcommand*{\PKstag}{\NonlocalPerimeterSymbol_{\Ks}}}%
    {\renewcommand*{\PKstag}{\newlink{OFF_global}{\NonlocalPerimeterSymbol_{\Ks}}}}%
    \IfNoValueTF{#2}%
    {\PKstag}%
    {%
        {\PKstag}\tonde*{#2}%
    }%
}
\NewDocumentCommand{\TARGETPKs}{o}%
{%
    \IfNoValueTF{#1}%
        {\newtarget{OFF_global}{\PKs}}%
        {\newtarget{OFF_global}{#1}}%
}
\NewDocumentCommand{\r}{s}{%
    \providecommand*{\rtag}{}%
    \IfBooleanTF{#1}
    {\renewcommand*{\rtag}{r}}%
    {\renewcommand*{\rtag}{\newlink{OFF_global}{r}}}%
    \rtag%
}
\NewDocumentCommand{\TARGETr}{o}%
{%
    \IfNoValueTF{#1}%
        {\newtarget{OFF_global}{\r}}%
        {\newtarget{OFF_global}{#1}}%
}
\NewDocumentCommand{\RealNumbersSymbol}{s}{%
    \providecommand*{\RealNumbersSymboltag}{}%
    \IfBooleanTF{#1}
    {\renewcommand*{\RealNumbersSymboltag}{\mathbb{R}}}%
    {\renewcommand*{\RealNumbersSymboltag}{\newlink{OFF_global}{\mathbb{R}}}}%
    \RealNumbersSymboltag%
}
\NewDocumentCommand{\TARGETRealNumbersSymbol}{o}%
{%
    \IfNoValueTF{#1}%
        {\newtarget{OFF_global}{\RealNumbersSymbol}}%
        {\newtarget{OFF_global}{#1}}%
}
\NewDocumentCommand{\rhoeps}{s o}{%
    \providecommand*{\rhoepstag}{}%
    \IfBooleanTF{#1}
    {\renewcommand*{\rhoepstag}{\OLDrho_{\eps}}}%
    {\renewcommand*{\rhoepstag}{\newlink{OFF_global}{\OLDrho_{\eps}}}}%
    \IfNoValueTF{#2}%
    {\rhoepstag}%
    {%
        {\rhoepstag}\tonde*{#2}%
    }%
}
\NewDocumentCommand{\TARGETrhoeps}{o}%
{%
    \IfNoValueTF{#1}%
        {\newtarget{OFF_global}{\rhoeps}}%
        {\newtarget{OFF_global}{#1}}%
}
\NewDocumentCommand{\rhoFAMILY}{s}{%
    \providecommand*{\rhoFAMILYtag}{}%
    \IfBooleanTF{#1}
    {\renewcommand*{\rhoFAMILYtag}{\graffe*{{}\OLDrho_{\eps}}_{\eps>0}}}%
    {\renewcommand*{\rhoFAMILYtag}{\newlink{OFF_global}{\graffe*{{}\OLDrho_{\eps}}_{\eps>0}}}}%
    \rhoFAMILYtag%
}
\NewDocumentCommand{\TARGETrhoFAMILY}{o}%
{%
    \IfNoValueTF{#1}%
        {\newtarget{OFF_global}{\rhoFAMILY}}%
        {\newtarget{OFF_global}{#1}}%
}
\NewDocumentCommand{\rp}{s}{%
    \providecommand*{\rptag}{}%
    \IfBooleanTF{#1}
    {\renewcommand*{\rptag}{r'}}%
    {\renewcommand*{\rptag}{\newlink{OFF_global}{r'}}}%
    \rptag%
}
\NewDocumentCommand{\TARGETrp}{o}%
{%
    \IfNoValueTF{#1}%
        {\newtarget{OFF_global}{\rp}}%
        {\newtarget{OFF_global}{#1}}%
}
\NewDocumentCommand{\Rpos}{s}{%
    \providecommand*{\Rpostag}{}%
    \IfBooleanTF{#1}
    {\renewcommand*{\Rpostag}{R}}%
    {\renewcommand*{\Rpostag}{\newlink{OFF_global}{R}}}%
    \Rpostag%
}
\NewDocumentCommand{\TARGETRpos}{o}%
{%
    \IfNoValueTF{#1}%
        {\newtarget{OFF_global}{\Rpos}}%
        {\newtarget{OFF_global}{#1}}%
}
\NewDocumentCommand{\s}{s}{%
    \providecommand*{\stag}{}%
    \IfBooleanTF{#1}
    {\renewcommand*{\stag}{s}}%
    {\renewcommand*{\stag}{\newlink{s_global}{s}}}%
    \stag%
}
\NewDocumentCommand{\TARGETs}{o}%
{%
    \IfNoValueTF{#1}%
        {\newtarget{s_global}{\s}}%
        {\newtarget{s_global}{#1}}%
}
\NewDocumentCommand{\SobolevSpacelocSymbol}{s}{%
    \providecommand*{\SobolevSpacelocSymboltag}{}%
    \IfBooleanTF{#1}
    {\renewcommand*{\SobolevSpacelocSymboltag}{{\SobolevSpaceSymbol}_{\loc}}}%
    {\renewcommand*{\SobolevSpacelocSymboltag}{\newlink{OFF_global}{{\SobolevSpaceSymbol}_{\loc}}}}%
    \SobolevSpacelocSymboltag%
}
\NewDocumentCommand{\TARGETSobolevSpacelocSymbol}{o}%
{%
    \IfNoValueTF{#1}%
        {\newtarget{OFF_global}{\SobolevSpacelocSymbol}}%
        {\newtarget{OFF_global}{#1}}%
}
\NewDocumentCommand{\SobolevSpaceSymbol}{s}{%
    \providecommand*{\SobolevSpaceSymboltag}{}%
    \IfBooleanTF{#1}
    {\renewcommand*{\SobolevSpaceSymboltag}{W}}%
    {\renewcommand*{\SobolevSpaceSymboltag}{\newlink{OFF_global}{W}}}%
    \SobolevSpaceSymboltag%
}
\NewDocumentCommand{\TARGETSobolevSpaceSymbol}{o}%
{%
    \IfNoValueTF{#1}%
        {\newtarget{OFF_global}{\SobolevSpaceSymbol}}%
        {\newtarget{OFF_global}{#1}}%
}
\NewDocumentCommand{\sqcup}{s}{%
    \providecommand*{\sqcuptag}{}%
    \IfBooleanTF{#1}
    {\renewcommand*{\sqcuptag}{\OLDsqcup}}%
    {\renewcommand*{\sqcuptag}{\newlink{sqcup_global}{\OLDsqcup}}}%
    \sqcuptag%
}
\NewDocumentCommand{\TARGETsqcup}{o}%
{%
    \IfNoValueTF{#1}%
        {\newtarget{sqcup_global}{\sqcup}}%
        {\newtarget{sqcup_global}{#1}}%
}
\NewDocumentCommand{\SubdifferentialSymbol}{s}{%
    \providecommand*{\SubdifferentialSymboltag}{}%
    \IfBooleanTF{#1}
    {\renewcommand*{\SubdifferentialSymboltag}{\partial}}%
    {\renewcommand*{\SubdifferentialSymboltag}{\newlink{SubdifferentialSymbol_global}{\partial}}}%
    \SubdifferentialSymboltag%
}
\NewDocumentCommand{\TARGETSubdifferentialSymbol}{o}%
{%
    \IfNoValueTF{#1}%
        {\newtarget{SubdifferentialSymbol_global}{\SubdifferentialSymbol}}%
        {\newtarget{SubdifferentialSymbol_global}{#1}}%
}
\NewDocumentCommand{\SupportSymbol}{s}{%
    \providecommand*{\SupportSymboltag}{}%
    \IfBooleanTF{#1}
    {\renewcommand*{\SupportSymboltag}{\operatorname{supp}}}%
    {\renewcommand*{\SupportSymboltag}{\newlink{SupportSymbol_global}{\operatorname{supp}}}}%
    \SupportSymboltag%
}
\NewDocumentCommand{\TARGETSupportSymbol}{o}%
{%
    \IfNoValueTF{#1}%
        {\newtarget{SupportSymbol_global}{\SupportSymbol}}%
        {\newtarget{SupportSymbol_global}{#1}}%
}
\NewDocumentCommand{\t}{s}{%
    \providecommand*{\ttag}{}%
    \IfBooleanTF{#1}
    {\renewcommand*{\ttag}{t}}%
    {\renewcommand*{\ttag}{\newlink{OFF_global}{t}}}%
    \ttag%
}
\NewDocumentCommand{\TARGETt}{o}%
{%
    \IfNoValueTF{#1}%
        {\newtarget{OFF_global}{\t}}%
        {\newtarget{OFF_global}{#1}}%
}
\NewDocumentCommand{\TopologicalBoundarySymbol}{s}{%
    \providecommand*{\TopologicalBoundarySymboltag}{}%
    \IfBooleanTF{#1}
    {\renewcommand*{\TopologicalBoundarySymboltag}{\partial}}%
    {\renewcommand*{\TopologicalBoundarySymboltag}{\newlink{TopologicalBoundarySymbol_global}{\partial}}}%
    \TopologicalBoundarySymboltag%
}
\NewDocumentCommand{\TARGETTopologicalBoundarySymbol}{o}%
{%
    \IfNoValueTF{#1}%
        {\newtarget{TopologicalBoundarySymbol_global}{\TopologicalBoundarySymbol}}%
        {\newtarget{TopologicalBoundarySymbol_global}{#1}}%
}
\NewDocumentCommand{\u}{s o}{%
    \providecommand*{\utag}{}%
    \IfBooleanTF{#1}
    {\renewcommand*{\utag}{u}}%
    {\renewcommand*{\utag}{\newlink{OFF_global}{u}}}%
    \IfNoValueTF{#2}%
    {\utag}%
    {%
        {\utag}\tonde*{#2}%
    }%
}
\NewDocumentCommand{\TARGETu}{o}%
{%
    \IfNoValueTF{#1}%
        {\newtarget{OFF_global}{\u}}%
        {\newtarget{OFF_global}{#1}}%
}
\NewDocumentCommand{\U}{s o}{%
    \providecommand*{\Utag}{}%
    \IfBooleanTF{#1}
    {\renewcommand*{\Utag}{U}}%
    {\renewcommand*{\Utag}{\newlink{OFF_global}{U}}}%
    \IfNoValueTF{#2}%
    {\Utag}%
    {%
        {\Utag}\tonde*{#2}%
    }%
}
\NewDocumentCommand{\TARGETU}{o}%
{%
    \IfNoValueTF{#1}%
        {\newtarget{OFF_global}{\U}}%
        {\newtarget{OFF_global}{#1}}%
}
\NewDocumentCommand{\ueps}{s o}{%
    \providecommand*{\uepstag}{}%
    \IfBooleanTF{#1}
    {\renewcommand*{\uepstag}{u_{\eps}}}%
    {\renewcommand*{\uepstag}{\newlink{OFF_global}{u_{\eps}}}}%
    \IfNoValueTF{#2}%
    {\uepstag}%
    {%
        {\uepstag}\tonde*{#2}%
    }%
}
\NewDocumentCommand{\TARGETueps}{o}%
{%
    \IfNoValueTF{#1}%
        {\newtarget{OFF_global}{\ueps}}%
        {\newtarget{OFF_global}{#1}}%
}
\NewDocumentCommand{\Ueps}{s o}{%
    \providecommand*{\Uepstag}{}%
    \IfBooleanTF{#1}
    {\renewcommand*{\Uepstag}{U_{\eps}}}%
    {\renewcommand*{\Uepstag}{\newlink{OFF_global}{U_{\eps}}}}%
    \IfNoValueTF{#2}%
    {\Uepstag}%
    {%
        {\Uepstag}\tonde*{#2}%
    }%
}
\NewDocumentCommand{\TARGETUeps}{o}%
{%
    \IfNoValueTF{#1}%
        {\newtarget{OFF_global}{\Ueps}}%
        {\newtarget{OFF_global}{#1}}%
}
\NewDocumentCommand{\uFAMILY}{s}{%
    \providecommand*{\uFAMILYtag}{}%
    \IfBooleanTF{#1}
    {\renewcommand*{\uFAMILYtag}{\graffe*{{}u_{\eps}}_{\eps>0}}}%
    {\renewcommand*{\uFAMILYtag}{\newlink{OFF_global}{\graffe*{{}u_{\eps}}_{\eps>0}}}}%
    \uFAMILYtag%
}
\NewDocumentCommand{\TARGETuFAMILY}{o}%
{%
    \IfNoValueTF{#1}%
        {\newtarget{OFF_global}{\uFAMILY}}%
        {\newtarget{OFF_global}{#1}}%
}
\NewDocumentCommand{\UFAMILY}{s}{%
    \providecommand*{\UFAMILYtag}{}%
    \IfBooleanTF{#1}
    {\renewcommand*{\UFAMILYtag}{\graffe*{{}U_{\eps}}_{\eps>0}}}%
    {\renewcommand*{\UFAMILYtag}{\newlink{OFF_global}{\graffe*{{}U_{\eps}}_{\eps>0}}}}%
    \UFAMILYtag%
}
\NewDocumentCommand{\TARGETUFAMILY}{o}%
{%
    \IfNoValueTF{#1}%
        {\newtarget{OFF_global}{\UFAMILY}}%
        {\newtarget{OFF_global}{#1}}%
}
\NewDocumentCommand{\uk}{s o}{%
    \providecommand*{\uktag}{}%
    \IfBooleanTF{#1}
    {\renewcommand*{\uktag}{u_{\k}}}%
    {\renewcommand*{\uktag}{\newlink{OFF_global}{u_{\k}}}}%
    \IfNoValueTF{#2}%
    {\uktag}%
    {%
        {\uktag}\tonde*{#2}%
    }%
}
\NewDocumentCommand{\TARGETuk}{o}%
{%
    \IfNoValueTF{#1}%
        {\newtarget{OFF_global}{\uk}}%
        {\newtarget{OFF_global}{#1}}%
}
\NewDocumentCommand{\un}{s o}{%
    \providecommand*{\untag}{}%
    \IfBooleanTF{#1}
    {\renewcommand*{\untag}{u_{\n}}}%
    {\renewcommand*{\untag}{\newlink{OFF_global}{u_{\n}}}}%
    \IfNoValueTF{#2}%
    {\untag}%
    {%
        {\untag}\tonde*{#2}%
    }%
}
\NewDocumentCommand{\TARGETun}{o}%
{%
    \IfNoValueTF{#1}%
        {\newtarget{OFF_global}{\un}}%
        {\newtarget{OFF_global}{#1}}%
}
\NewDocumentCommand{\uOm}{s}{%
    \providecommand*{\uOmtag}{}%
    \IfBooleanTF{#1}
    {\renewcommand*{\uOmtag}{{\u}_{\Om}}}%
    {\renewcommand*{\uOmtag}{\newlink{OFF_global}{{\u}_{\Om}}}}%
    \uOmtag%
}
\NewDocumentCommand{\TARGETuOm}{o}%
{%
    \IfNoValueTF{#1}%
        {\newtarget{OFF_global}{\uOm}}%
        {\newtarget{OFF_global}{#1}}%
}
\NewDocumentCommand{\uSEQUENCE}{s}{%
    \providecommand*{\uSEQUENCEtag}{}%
    \IfBooleanTF{#1}
    {\renewcommand*{\uSEQUENCEtag}{\graffe*{{{}u}_{\k}}_{\k\in\N}}}%
    {\renewcommand*{\uSEQUENCEtag}{\newlink{OFF_global}{\graffe*{{{}u}_{\k}}_{\k\in\N}}}}%
    \uSEQUENCEtag%
}
\NewDocumentCommand{\TARGETuSEQUENCE}{o}%
{%
    \IfNoValueTF{#1}%
        {\newtarget{OFF_global}{\uSEQUENCE}}%
        {\newtarget{OFF_global}{#1}}%
}
\NewDocumentCommand{\varphi}{s o}{%
    \providecommand*{\varphitag}{}%
    \IfBooleanTF{#1}
    {\renewcommand*{\varphitag}{\OLDvarphi}}%
    {\renewcommand*{\varphitag}{\newlink{OFF_global}{\OLDvarphi}}}%
    \IfNoValueTF{#2}%
    {\varphitag}%
    {%
        {\varphitag}\tonde*{#2}%
    }%
}
\NewDocumentCommand{\TARGETvarphi}{o}%
{%
    \IfNoValueTF{#1}%
        {\newtarget{OFF_global}{\varphi}}%
        {\newtarget{OFF_global}{#1}}%
}
\NewDocumentCommand{\varphir}{s o}{%
    \providecommand*{\varphirtag}{}%
    \IfBooleanTF{#1}
    {\renewcommand*{\varphirtag}{\OLDvarphi_{\r}}}%
    {\renewcommand*{\varphirtag}{\newlink{OFF_global}{\OLDvarphi_{\r}}}}%
    \IfNoValueTF{#2}%
    {\varphirtag}%
    {%
        {\varphirtag}\tonde*{#2}%
    }%
}
\NewDocumentCommand{\TARGETvarphir}{o}%
{%
    \IfNoValueTF{#1}%
        {\newtarget{OFF_global}{\varphir}}%
        {\newtarget{OFF_global}{#1}}%
}
\NewDocumentCommand{\varphiR}{s o}{%
    \providecommand*{\varphiRtag}{}%
    \IfBooleanTF{#1}
    {\renewcommand*{\varphiRtag}{\OLDvarphi_{\Rpos}}}%
    {\renewcommand*{\varphiRtag}{\newlink{OFF_global}{\OLDvarphi_{\Rpos}}}}%
    \IfNoValueTF{#2}%
    {\varphiRtag}%
    {%
        {\varphiRtag}\tonde*{#2}%
    }%
}
\NewDocumentCommand{\TARGETvarphiR}{o}%
{%
    \IfNoValueTF{#1}%
        {\newtarget{OFF_global}{\varphiR}}%
        {\newtarget{OFF_global}{#1}}%
}
\NewDocumentCommand{\VK}{s o}{%
    \providecommand*{\VKtag}{}%
    \IfBooleanTF{#1}
    {\renewcommand*{\VKtag}{\NonlocalFunctionalSymbol_{\K}}}%
    {\renewcommand*{\VKtag}{\newlink{OFF_global}{\NonlocalFunctionalSymbol_{\K}}}}%
    \IfNoValueTF{#2}%
    {\VKtag}%
    {%
        {\VKtag}\tonde*{#2}%
    }%
}
\NewDocumentCommand{\TARGETVK}{o}%
{%
    \IfNoValueTF{#1}%
        {\newtarget{OFF_global}{\VK}}%
        {\newtarget{OFF_global}{#1}}%
}
\NewDocumentCommand{\VKdelta}{s o}{%
    \providecommand*{\VKdeltatag}{}%
    \IfBooleanTF{#1}
    {\renewcommand*{\VKdeltatag}{\NonlocalFunctionalSymbol_{\Kdelta}}}%
    {\renewcommand*{\VKdeltatag}{\newlink{OFF_global}{\NonlocalFunctionalSymbol_{\Kdelta}}}}%
    \IfNoValueTF{#2}%
    {\VKdeltatag}%
    {%
        {\VKdeltatag}\tonde*{#2}%
    }%
}
\NewDocumentCommand{\TARGETVKdelta}{o}%
{%
    \IfNoValueTF{#1}%
        {\newtarget{OFF_global}{\VKdelta}}%
        {\newtarget{OFF_global}{#1}}%
}
\NewDocumentCommand{\x}{s o}{%
    \providecommand*{\xtag}{}%
    \IfBooleanTF{#1}
    {\renewcommand*{\xtag}{x}}%
    {\renewcommand*{\xtag}{\newlink{OFF_global}{x}}}%
    \IfNoValueTF{#2}%
    {\xtag}%
    {%
        {\xtag}_{#2}%
    }%
}
\NewDocumentCommand{\TARGETx}{o}%
{%
    \IfNoValueTF{#1}%
        {\newtarget{OFF_global}{\x}}%
        {\newtarget{OFF_global}{#1}}%
}
\NewDocumentCommand{\X}{s o}{%
    \providecommand*{\Xtag}{}%
    \IfBooleanTF{#1}
    {\renewcommand*{\Xtag}{X}}%
    {\renewcommand*{\Xtag}{\newlink{X_global}{X}}}%
    \IfNoValueTF{#2}%
    {\Xtag}%
    {%
        {\Xtag}\tonde*{#2}%
    }%
}
\NewDocumentCommand{\TARGETX}{o}%
{%
    \IfNoValueTF{#1}%
        {\newtarget{X_global}{\X}}%
        {\newtarget{X_global}{#1}}%
}
\NewDocumentCommand{\xp}{s o}{%
    \providecommand*{\xptag}{}%
    \IfBooleanTF{#1}
    {\renewcommand*{\xptag}{x'}}%
    {\renewcommand*{\xptag}{\newlink{OFF_global}{x'}}}%
    \IfNoValueTF{#2}%
    {\xptag}%
    {%
        {\xptag}_{#2}%
    }%
}
\NewDocumentCommand{\TARGETxp}{o}%
{%
    \IfNoValueTF{#1}%
        {\newtarget{OFF_global}{\xp}}%
        {\newtarget{OFF_global}{#1}}%
}
\NewDocumentCommand{\xzero}{s o}{%
    \providecommand*{\xzerotag}{}%
    \IfBooleanTF{#1}
    {\renewcommand*{\xzerotag}{x_{0}}}%
    {\renewcommand*{\xzerotag}{\newlink{OFF_global}{x_{0}}}}%
    \IfNoValueTF{#2}%
    {\xzerotag}%
    {%
        {\xzerotag}_{#2}%
    }%
}
\NewDocumentCommand{\TARGETxzero}{o}%
{%
    \IfNoValueTF{#1}%
        {\newtarget{OFF_global}{\xzero}}%
        {\newtarget{OFF_global}{#1}}%
}
\NewDocumentCommand{\y}{s o}{%
    \providecommand*{\ytag}{}%
    \IfBooleanTF{#1}
    {\renewcommand*{\ytag}{y}}%
    {\renewcommand*{\ytag}{\newlink{OFF_global}{y}}}%
    \IfNoValueTF{#2}%
    {\ytag}%
    {%
        {\ytag}_{#2}%
    }%
}
\NewDocumentCommand{\TARGETy}{o}%
{%
    \IfNoValueTF{#1}%
        {\newtarget{OFF_global}{\y}}%
        {\newtarget{OFF_global}{#1}}%
}
\NewDocumentCommand{\z}{s o}{%
    \providecommand*{\ztag}{}%
    \IfBooleanTF{#1}
    {\renewcommand*{\ztag}{z}}%
    {\renewcommand*{\ztag}{\newlink{OFF_global}{z}}}%
    \IfNoValueTF{#2}%
    {\ztag}%
    {%
        {\ztag}_{#2}%
    }%
}
\NewDocumentCommand{\TARGETz}{o}%
{%
    \IfNoValueTF{#1}%
        {\newtarget{OFF_global}{\z}}%
        {\newtarget{OFF_global}{#1}}%
}
\begin{document}
\title{A note on non-local Sobolev spaces and non-local perimeters}
	
\author[K.~Bessas]{Konstantinos Bessas}
\address[K.~Bessas]{Dipartimento di Matematica,
Università di Pavia, Via Adolfo Ferrata 5, 27100 Pavia, Italy}
\email{konstantinos.bessas@unipv.it}

\author[G. C.~Brusca]{Giuseppe Cosma Brusca}
\address[G. C.~Brusca]{SISSA, Via Bonomea 265, 34136 Trieste, Italy}
\email{gbrusca@sissa.it}

\date{\today}

\keywords{Non-local Sobolev spaces, Functions of non-local bounded variation, non-local interactions, non-local perimeters, isoperimetric problems.}

\subjclass[2020]{49Q20, 26B30, 46E35.}

\thanks{\textit{Acknowledgements}. 
KB has been supported by Fondazione Cariplo, grant n° 2023-0873.\\
KB is member of the INdAM-GNAMPA and his work is partially supported
by the INdAM -GNAMPA 2024 Project "Proprietà geometriche e regolarità 
in problemi variazionali 
locali e non locali", codice CUP E53C23001670001. GCB is member of the INdAM-GNAMPA}

\begin{abstract}
    We investigate the space of non-local Sobolev functions
    associated with an integral kernel.
    We prove an extension result, Sobolev and Poincaré inequalities
    and an isoperimetric inequality for the non-local perimeter restricted to a set.
    Finally, we remark on non-local isoperimetric problems,
    even when the underlying kernel is not necessarily radially symmetric.
\end{abstract}

\maketitle

\tableofcontents    

\section{Introduction}\label{sec:intro}
The aim of this note is presenting a result of continuous extension for non-local Sobolev spaces in which interactions 
are weighted by a non-negative kernel $K$. 
Letting $\TARGETK$ be a kernel, i.e., a measurable function from $\Rd$ to $\intervallo{[]}{0}{+\infty}$
such that $K\not\equiv0$ up to negligible sets, and letting
$\Om$ denote an open subset of $\Rd$, we consider the seminorm
\begin{equation}\label{eq:seminormWKp}
    \seminormWKpOm[\u]
    \coloneqq
    \tonde*{
        \intOmOm
        \abs{\u[\x]-\u[\y]}^{\p}
        \K[\x-\y]
        \integralde\x\de\y
    }^{\frac{1}{\p}}\in\intervallo{[]}{0}{+\infty},
\end{equation}
for $\p\in\unoinfinitoescluso$ and $\u:\Rd\to\intervallo{[]}{-\infty}{+\infty}$ measurable and finite a.e.,
and introduce the Banach space
\begin{equation*}
    \WKpOm:=\graffe{\u\in\LpOm:\seminormWKpOm[\u]<+\infty}
\end{equation*}
equipped with the norm $\|u\|_{W^{K,p}(\Omega)}:=\|u\|_{L^p(\Omega)}+[u]_{W^{K,p}(\Omega)}$.

We prove that, under fairly mild assumptions on the kernel,
$W^{K,p}(\Omega)$ is continuously embedded in $W^{K,p}(\R^d)$ through an extension operator,
provided that $\Omega$ is a bounded, open set with Lipschitz boundary.
The proof of this result is obtained by adapting the arguments employed in \cite{DiNPalVal12-MR2944369} for fractional Sobolev spaces,
i.e., in the case $K(z)=|z|^{-d-sp}$ for $s\in(0,1)$.
In order to treat a rather general class of kernels,
we detect a set of assumptions that are needed to us, but that may possibly be weakened,
in order to carry out our analysis. Some of these hypotheses have been discussed by the first author and Stefani in \cite{BesSte25-MR4845992}
for studying the properties of the above function space when $\Omega=\R^d$. In that work, it is remarked that certain assumptions are necessary
in order to obtain a non-trivial function space: as an example, a simple computation shows that if $K\in L^1(\R^d)$, then $W^{K,p}(\R^d)=L^p(\R^d)$,
and, for this reason, we may always assume that the kernel is \emph{non-integrable},
\begin{equation}\label{H:Nint}
\tag{Nint}    K \notin L^1(\R^d).
\end{equation}
In this work, we prove that another natural assumption is the
integrability of the kernel \emph{far from the origin}, i.e.,
\begin{equation}
    \label{H:Far}
    \tag{Far}
    \text{
    $\K\in\Lspace[1][\Rd\setminus\Br]$ for all $r>0$,
    }
\end{equation}
where $B_r$ denotes the euclidean ball centred at the origin with radius $r$. Indeed, we show that, if the above is not in force, then $\WKp=\graffe{0}$.

The main difficulties in mimicking the arguments presented in \cite{DiNPalVal12-MR2944369} are due to the general structure of the kernel $K$, as radial symmetry and homogeneity are usually lacking. Here, we present the main assumptions that we need, additionally to the above mentioned $(\text{Nint})$ and $(\text{Far})$, in order to obtain our result. 

We let $|\cdot|_*$ denote a generic norm on $\R^d$, and, for the sake of clarity, we let $|\cdot|_2$ denote the standard euclidean norm on $\R^d$. Given a measurable function $K\colon \R^d\to [0,+\infty]$, we will consider the following conditions:
\begin{equation}\label{H:Dec}
    \tag{$\mathrm{Dec}(c_0,|\cdot|_*)$}
    \begin{split}
      &  \text{ there exists } c_0\in(0,1] \text{ such that } \\
      & \quad  |x|_*\leq |y|_* \implies K(x)\geq c_0 K(y);
    \end{split}  
\end{equation}
\begin{equation}
\tag{$\mathrm{Dou}(D,|\cdot|_*)$}
\begin{split}
    & \text{ there exist positive constants } D \text{ and } C_D \text{ such that } \\ \label{H:Dou}
& \quad |y|_*=2|x|_*, |x|_*\leq D \implies K(x)\leq C_DK(y);  
\end{split}
\end{equation}
\begin{equation}\label{H:Nts}
\tag{$\mathrm{Nts}_p$}    \int_{\R^d}K(z)\min\{1, \EucNorm{\z}^{\p}\}\,\de\z <+\infty.
\end{equation}
We comment on the above hypotheses.
As for the assumption \eqref{H:Dec}, we observe that if $c_0=1$, then the kernel is of the form $K(z)=\kappa(|z|_*)$ for some $\kappa\colon [0,+\infty) \to [0,+\infty]$ 
non-increasing. Our analysis takes into account also kernels which are not necessarily of this form, indeed if $c_0\in(0,1)$, such assumption allows to deal with kernels that are not described by a one-dimensional, non-increasing profile. For instance, we may consider 
\begin{equation*}
    \kappa \colon [0,+\infty)\to [0,+\infty]
\end{equation*}
a non-increasing function, and
\begin{equation*}
    \phi \colon \R^d\to[\alpha,\beta]
\end{equation*}
a measurable function with $0<\alpha<\beta$.
Then, the kernel 
\begin{equation*}
K(z)=\kappa(|z|_*)\phi(z),
\end{equation*}
satisfies \eqref{H:Dec}
 with $c_0=\alpha/\beta$.

The doubling assumption \eqref{H:Dou} gives quantitative information on the growth of the kernel close to the origin. We will take advantage of this assumption, combined with \eqref{H:Dec}, in order to overcome the lack of homogeneity of $K$.

Finally, the assumption \eqref{H:Nts} is stronger than \eqref{H:Far} and it implies that the usual local Sobolev space $W^{1,p}(\R^d)$ is contained in $W^{K,p}(\R^d)$, with continuous injection. We point out that, as a consequence of the equivalence of the norms on $\R^d$, \eqref{H:Nts} could be equivalently stated in terms of the generic norm $|\cdot|_*$.

\smallskip

A prototypical kernel that satisfies all the above assumptions is the one piecewise defined by fractional kernels: 
\begin{equation*}
K(z)=
\begin{cases}
\alpha_1|z|_*^{-d-s_1p} & \text{ if } |z|_*\leq R_1, \\
\alpha_k|z|_*^{-d-s_kp} & \text{ if } R_{k-1}<|z|_*\leq R_{k},\quad k\in\{2,...,M-1\},
\\
\alpha_M|z|^{-d-s_Mp}_* & \text{ if } |z|_* > R_{M-1},
\end{cases}
\end{equation*}
where $M\in \mathbb{N}^+$ is fixed and it holds that $\alpha_1\geq ...\geq \alpha_M\geq0$, $1\leq R_1<...<R_M$, and $0<s_1\leq...\leq s_M<1$.

 \noindent Also mixed fractional-logarithmic kernels can be taken into account in our analysis. For instance, the kernel
 \begin{equation*}
     K(z)= 
     \begin{cases}
         (1-\log(|z|_*))^{-\alpha}|z|_*^{-d-s} & \text{ if } |z|_*\leq 1, \\
         0 & \text{ if } |z|_*>1
     \end{cases}
 \end{equation*}
always satisfies \eqref{H:Far}, \eqref{H:Dou} and \NtsOne{}; while \eqref{H:Nint} holds for 
\begin{equation*}
    (s,\alpha)\in (0,1)\times \R \cup \{0\}\times(-\infty, 1],
\end{equation*}
and \eqref{H:Dec} holds for every $c_0\in\intervallo{(]}{0}{1}$ whenever $\alpha\in(-\infty, 0]$.

\noindent Finally, we observe that assumption \eqref{H:Dec} allows to consider oscillating kernels when $c_0\in(0,1)$. Indeed, an example of admissible kernel is obtained starting from the one-dimensional profile
\begin{equation*}
    \kappa(t)=
    \begin{cases}
\beta t^{-d-s} & \text{ if } t\in(0,1], \\
\alpha\sin(2\pi t)+\beta & \text{ if } t\in (1, M], \\
\beta (t-M+1)^{-d-s} & \text{ if } t\in(M,+\infty),
    \end{cases}
\end{equation*}
where $M\in \mathbb{N}^+, 0<\alpha<\beta$, and it is given by
\begin{equation*}
    K(z)=\kappa(|z|_*),
\end{equation*}
which satisfies \eqref{H:Dec} with $c_0=\frac{\beta-\alpha}{\alpha+\beta}$.

We point out that an extension result is also stated (but not proved) in \cite[Theorem 3.78]{Fog20-phd} under the further assumption that the kernel is radial, without requiring a doubling property. In our proof, radial symmetry of the kernel is not required, but, in order to employ the arguments of \cite{DiNPalVal12-MR2944369}, we also rely on \eqref{H:Dou}. 

\smallskip

As an application of the extension result, we prove continuous and compact embeddings of $W^{K,p}(\Omega)$ in $L^q(\Omega)$ 
for some $q$ possibly strictly larger than $p$. At that end, we resort to some results that are already present in the literature, 
see again \cite{BesSte25-MR4845992} or \cite{CesNov18-MR3732175} by Cesaroni and Novaga.
To some extent, the reason that leads us to study the extension property of non-local Sobolev spaces
is that it allows us to prove Poincaré-Wirtinger and Sobolev inequalities, which, in turn, can be used to obtain
a non-local isoperimetric inequality relative to Lipschitz domains that may be employed for the study of (non-local) isoperimetric problems on the whole $\R^d$.

\subsection{Non-local geometric variational problems}
Several non-local geometric variational problems can be
studied applying the theory of non-local Sobolev functions. 

A first example is contained in \cite{BesSte25-MR4845992},
where the space of functions of non-local bounded variation (which coincides with $\WKone$)
was investigated and its theory was applied to
generalize results for a denoising model with $\Lspace[1]$ fidelity
and a non-local total variation associated to a singular kernel.
These results are related to the fidelity
of the model and were first proved in \cite{Bes22-MR4419012} for
the fractional kernel. 
In recent years there has been interest for non-local image denoising problems,
see for instance 
\cite{AntDiaJinSch24-MR4712400,MazSolTol22-MR4451904,NovOno23-MR4645235}. 

Another class of geometric variational problems 
involving the fractional kernel can be found in
\cite{BesNovOno23-MR4674821,CesNov17-MR3640534}
where the main object is a model to describe the
shape of a liquid under the action of a bulk energy.
In these models the structure of the fractional kernel
seems to play a significant role, due to the strong
(although partial) regularity theory in this context, which is
not clear for more general kernels.\
We also recall recent non-local models dealing with the study of liquids 
subjected to non-local interactions
\cite{MagVal17-MR3717439,DipMagVal17-MR3707346,DipMagVal22-MR4404780,DeLDipVal24-MR4721780,indrei2024nonlocalalmgrenproblem}.

Finally, another application is related to the study of the non-local isoperimetric problem,
which will be discussed in more detail in \cref{sec:final}.
We anticipate that under strong symmetry and monotonicity assumptions on the kernel,
the theory is well-known. Nevertheless, as soon as one drops these hypotheses, the situation
becomes significantly more complex. Some partial results are contained in \cite{CesNov18-MR3732175}
for integrable kernels and in
\cite{Lud14-MR3161386,Kre21-MR4214043}
for the fractional anisotropic perimeters.
\smallskip

\subsection{Plan of the paper}

The plan of the paper is the following.
In \cref{sec:notation} we first fix some notations and recall some preliminary results.

In \cref{sec:first_properties} we prove some properties of the space of non-local Sobolev functions.
First, the density of smooth functions in non-local Sobolev spaces is proved in \cref{res:BVKDense},
then the triviality of such spaces if \eqref{H:Far}
is not in force is proved in \cref{res:NonFarBVKtrivial}.

\cref{sec:extension} is devoted to the proof of the extension result (\cref{thm:extension}) and also contains, as an immediate Corollary, a density result of functions that are smooth up to the boundary of the regular domain $\Omega$.

In \cref{sec:SobolevPoincare} we recall some known facts about continuous and compact embeddings for $W^{K,p}(\R^d)$ and apply our extension theorem to obtain analogous results for $W^{K,p}(\Omega)$, see \cref{cor:compactembedding} and
\cref{cor:continuousembedding}, together with a Poincaré-Wirtinger inequality (\cref{prop:poincaré}).

In \cref{sec:final}, the monotonicity with respect to the radius of the non-local perimeter of euclidean balls is addressed in \cref{res:PKBR}.
An explicit formula to compute, in dimension one, the non-local perimeter of balls (i.e., intervals) is the object of \cref{res:formulaPKinter}.
An explicit example showing that in general euclidean balls are not solutions of the non-local isoperimetric problem with volume constraint
(even if the kernel is radially symmetric) is contained in \cref{res:BallsNotMinimizers}.
As a consequence of the previously obtained Poincaré-Wirtinger inequality, in \cref{cor:localisoperimetric} we furnish a non-local isoperimetric inequality relative to a regular domain $\Omega$.

\section{Notation and preliminary results}\label{sec:notation}

We fix some basic notations and recall some well-known facts, whose proof can be found, for instance, in \cite{BesSte25-MR4845992}.

Let $\TARGETd\geq1$. 
We put $\newtarget{complement_global}{\comp{\E}\coloneqq\Rd\setminus\E}$ for every $\E\subset\Rd$. For $\E\subset\Rd$ we denote by $\TARGETCharacteristicFunctionSymbol[\CharFun{\E}]$ the characteristic
function of $\E$, which is defined by $\CharFun{\E}[\x]\coloneqq1$ if $\x\in\E$ and
$\CharFun{\E}[\x]\coloneqq0$ if $\x\in\comp{\E}$. For any $\E,\F\subset\Rd$, we put $\distance\tonde{\E,\F}\coloneqq\inf\graffe*{\EucNorm{\x-\y}:\x\in\E,\y\in\F}$.

We let the symbol $\newtarget{lebesgue_global}{\lebesgue{\d}}$ denote the Lebesgue measure of $\Rd$ and, unless otherwise specified,
the term \emph{measurable} is used as a shorthand of $\lebesgue{\d}$-measurable
and \emph{a.e.} as a shorthand of $\lebesgue{\d}$-almost everywhere. If $\E\subset\Rd$ is a measurable set
we denote $\lebd{\E}\coloneqq\lebesgue{\d}[\E]$. $\Hdmenouno$ denotes the Hausdorff measure of dimension $\d-1$.

We write $\TARGETbigsqcup[\E\TARGETsqcup\F]$ to denote $\E\cup\F$ whenever $\E,\F\subset\Rd$
are measurable sets such that $\lebd{\E\cap\F}=0$.

We denote by $\TARGETBallSymbol[\BallRadiusCenter{\r}{\x}]\coloneqq\graffe*{\y\in\Rd:\EucNorm{\y-\x}<\r}$ the open euclidean ball
of radius $\r>0$ centred at $\x\in\Rd$ and $\Br\coloneq\BallRadiusCenter{\r}{0}$.
Moreover, we let $\BallVolumeCenter{\m}{\x}\coloneqq\BallRadiusCenter{\r\tonde{\m}}{\x}$,
where $\r\tonde{\m}=\tonde*{{\m}/{\lebd{\BallRadius{1}}}}^{1/\d}$, be the open euclidean ball
of volume $\m>0$ centred at $\x\in\Rd$ and $\Bm\coloneq\BallVolumeCenter{\m}{0}$.

We let the symbol $\TARGETTopologicalBoundarySymbol[\bdry{\E}]$ denote the topological boundary of
a set $\E\subset\Rd$.

If $\Om\subset\Rd$ is a measurable set of finite measure
we put $\rearr{\Om}=\BallVolume{\lebd{\Om}}$.
If $\u:\Rd\to\intervallo{[]}{-\infty}{+\infty}$ is a measurable function such that
$\lebd{\graffe*{\abs{\u}>\t}}<+\infty$ for all $\t>0$, 
we define $\rearr{\u}$, the \emph{symmetric-decreasing rearrangement} of $\u$, 
by
\begin{equation*}
    \rearr{\u}\tonde{\x}
    \coloneqq
    \int_0^{+\infty}
        \CharFun{\rearr{\graffe*{\abs{\u}>\t}}}(x)
    \integralde\t,
\end{equation*}
for every $\x\in\Rd$.

We define the forward difference operator $\Deltah:\LonelocRd\to\LonelocRd$ as,
\begin{equation*}
    \TARGETForwardDifferenceOperatorSymbol[\Deltahu[\x]]\coloneqq\u[\x+\h]-\u[\x],
\end{equation*}
for every $\x,\h\in\Rd$.

We then observe that, if $\K$ is a kernel,
\begin{equation}\label{eq:seminormWKpalt}
    \seminormWKp[\u]
    =
    \tonde*{
    \intRd
    \normLpRd{\Deltahu}^{\p}
    \K[\h]
    \integralde\h
    }^{\frac{1}{\p}},
\end{equation}
for all $\p\in\intervallo{[)}{1}{+\infty}$. 

We say that $\K$ is \emph{symmetric}  if
\begin{equation}
    \label{H:Sym}
    \tag{Sym}
    \text{
    $\K[\x]=\K[-\x]$ for all $\x\in\Rd$,
    }
\end{equation}
$\K$ is \emph{strictly positive} if
\begin{equation}
    \label{H:Pos}
    \tag{Pos}
    \text{
    $\K[\x]>0$ for all $\x\in\Rd$,
    }
\end{equation}
$\K$ has \emph{positive infimum around the origin} if
\begin{equation}
    \label{H:Inf}
    \tag{Inf}
    \text{
    there exist $\r,\mu>0$ such that $\K[\x]\geq\mu$ for all $\x\in\Br$.
    }
\end{equation}

\begin{remark}[][res:KsymSubstK]
    Let $\K$ be a kernel, $\p\in\unoinfinitoescluso$ and $\Om\subset\Rd$ open.
    We define $\TARGETKsym[\Ksym[\x]]\coloneqq\tonde*{\K[\x]+\K[-\x]}/{2}$ for every $\x\in\Rd$
    and we observe that $\Ksym$ is a kernel satisfying \eqref{H:Sym}, and
    \begin{equation*}
    \seminormWKpOm[\u]
    =
    \seminorm{\u}_{\nonlocalSobolevspace[\Ksym,\p][\Om]},
    \end{equation*}
for every $\u:\Om\to\intervallo{[]}{-\infty}{+\infty}$ measurable and finite a.e.. 
Furthermore, passing from $\K$ to $\Ksym$, the assumptions \eqref{H:Far},
\eqref{H:Nint}, \eqref{H:Nts}, \eqref{H:Pos} and \eqref{H:Inf} are preserved.
\end{remark}
In view of \cref{res:KsymSubstK},
we observe that the hypothesis \eqref{H:Sym}
is not restrictive if one is interested in computing \eqref{eq:seminormWKp},
provided that $\Ksym$ preserves all the initial assumptions satisfied by $\K$,
up to replace $\K$ with the kernel $\Ksym$ defined in \cref{res:KsymSubstK}.

\section{Properties of non-local Sobolev functions}\label{sec:first_properties}

In this section we first prove a density result for non-local Sobolev functions
and then show that \eqref{H:Far} is essential to make the spaces $\WKp$ non-trivial.

\begin{proposition}[][res:BVKDense]
    $\CinftyRd\cap\WKp$ is dense in $\WKp$
    for every $\p\in\unoinfinitoescluso$.
\end{proposition}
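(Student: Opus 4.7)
The plan is to prove density by standard mollification, exploiting the translation-invariant formulation \eqref{eq:seminormWKpalt} of the seminorm. Fix $u\in W^{K,p}(\R^d)$ and let $\{\rho_\eps\}_{\eps>0}$ be a standard family of Friedrichs mollifiers on $\R^d$. Set $u_\eps\coloneqq u\ast\rho_\eps\in C^\infty(\R^d)$. The goal is to show $u_\eps\to u$ in $L^p(\R^d)$ and $[u_\eps-u]_{W^{K,p}(\R^d)}\to 0$, which together with the inclusion $u_\eps\in W^{K,p}(\R^d)$ (verified below) yield density.

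Convergence in $L^p(\R^d)$ is classical. For the seminorm, the key observation is that the forward difference operator $\Delta_h$ commutes with convolution: $\Delta_h u_\eps = (\Delta_h u)\ast\rho_\eps$. Hence, by Young's inequality with $\|\rho_\eps\|_{L^1(\R^d)}=1$,
\begin{equation*}
\|\Delta_h u_\eps\|_{L^p(\R^d)}\le \|\Delta_h u\|_{L^p(\R^d)},
\end{equation*}
which via \eqref{eq:seminormWKpalt} gives $[u_\eps]_{W^{K,p}(\R^d)}\le [u]_{W^{K,p}(\R^d)}<+\infty$, so $u_\eps\in W^{K,p}(\R^d)$. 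The same estimate applied to $u_\eps-u$ yields the pointwise-in-$h$ domination
\begin{equation*}
\|\Delta_h(u_\eps-u)\|_{L^p(\R^d)}^p\, K(h)\le 2^p\,\|\Delta_h u\|_{L^p(\R^d)}^p\,K(h),
\end{equation*}
and the right-hand side is integrable in $h$ over $\R^d$ by the assumption $u\in W^{K,p}(\R^d)$ and formula \eqref{eq:seminormWKpalt}.

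It remains to check that for each fixed $h\in\R^d$ the integrand tends to $0$ as $\eps\to 0^+$. Since $\Delta_h u\in L^p(\R^d)$ (translations being isometries of $L^p$), the standard mollification theorem gives $(\Delta_h u)\ast\rho_\eps\to \Delta_h u$ in $L^p(\R^d)$, that is $\|\Delta_h(u_\eps-u)\|_{L^p(\R^d)}\to 0$. Thus Lebesgue's dominated convergence theorem applied to the outer integral over $h\in\R^d$ yields $[u_\eps-u]_{W^{K,p}(\R^d)}\to 0$, completing the proof.

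No step is genuinely hard here; the only thing to be careful about is the dominating function, which requires both that $\Delta_h u\in L^p(\R^d)$ for every $h$ (to make Young's inequality applicable pointwise in $h$) and that the resulting majorant $h\mapsto 2^p\|\Delta_h u\|_{L^p}^p K(h)$ is $K$-weighted integrable — both of which are automatic from $u\in W^{K,p}(\R^d)$ together with \eqref{eq:seminormWKpalt}. No assumption on $K$ beyond being a kernel is needed for this density statement.
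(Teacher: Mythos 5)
Your proof is correct and follows essentially the same route as the paper's: mollification, the commutation $\Delta_h(\rho_\eps\ast u)=\rho_\eps\ast\Delta_h u$ with Young's inequality to get the domination $\|\Delta_h(u_\eps-u)\|_{L^p}^p K(h)\le 2^p\|\Delta_h u\|_{L^p}^p K(h)$, and dominated convergence in the $h$-integral. No discrepancies worth noting.
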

\begin{proof}

    We follow the same strategy as in \cite[Theorem 6.62]{Leo23-MR4567945}.

    Let $\K$ be a kernel and
    let $\p\in\unoinfinitoescluso$.
    Let $\TARGETrhoeps[\rhoFAMILY]$ be a family of standard mollifiers and for every
    $\eps>0$ define $\ueps\coloneqq\rhoeps\convolution\u$.
    We claim that for every $\eps>0$,
    \begin{equation}\label{eq:claimUNO}
        \seminormWKp[\ueps]\leq\seminormWKp[\u],
    \end{equation}
    and that
    \begin{equation}\label{eq:claimDUE}
        \limepstozeropiu\seminormWKp[\ueps-\u]=0.
    \end{equation}
    We define $\Deps[\h]\coloneqq\normLpRd{\Deltahueps-\Deltahu}^{\p}$, $\U[\h]\coloneqq\normLpRd{\Deltahu}^{\p}$ and
    $\Ueps[\h]\coloneqq\normLpRd{\Deltahueps}^{\p}$
    for every $\h\in\Rd$ and $\eps>0$.
    Thanks to \eqref{eq:seminormWKpalt}, for every $\eps>0$ we get,
    \begin{equation}\label{eq:intermedio}
        \seminormWKp[\ueps-\u]^{\p}
        =
        \intRd
        \Deps[\h]
        \K[\h]
        \integralde\h.
    \end{equation}
    We note that $\Deltahueps=\rhoeps\convolution\Deltahu$ for every $\h\in\Rd$ and $\eps>0$,
    which implies that $\Ueps[\h]\leq\U[\h]$ for every $\eps>0$.
    Thus, invoking \eqref{eq:seminormWKpalt} again, \eqref{eq:claimUNO} immediately follows.
    Moreover, 
    $\limepstozeropiu\Deps[\h]=0$ 
    and $\Deps[\h]\leq2^{\p-1}\tonde*{\Ueps[\h]+\U[\h]}\leq2^{\p}\U[\h]$ for every $\h\in\Rd$.
    Since $2^{\p}\K\U\in L^1(\R^d)$, because of \eqref{eq:seminormWKpalt}, it is a domination
    for the family $\graffe*{\K\Deps}_{\eps>0}$ and so we can apply
    Lebesgue dominated convergence theorem to pass to the limit in \eqref{eq:intermedio}
    to get \eqref{eq:claimDUE}.
\end{proof}

If a kernel does not satisfy \eqref{H:Far}, it necessarily develops
singularities away from the origin
(at finite distances or at infinity), as explained in the following:

\begin{lemma}[][res:NonFar]
Let $\K$ be a kernel. 
$\K$ does not satisfy assumption \eqref{H:Far} if and only if
either 
\begin{equation}\label{eq:KintInfty2}
    \integral{\comp{\Br}}\K[\h]\integralde\h
    =
    +\infty,
\end{equation}
for every $\r>0$, or
there exists $\hzero\in\RdmenoOrigine$ such that
\begin{equation}\label{eq:KintInfty}
    \integral{\Om}\K[\h]\integralde\h
    =
    +\infty,
\end{equation}
for every $\Om\subset\Rd$ neighborhood of $\hzero$.
   
In particular, if $\K$ satisfies \eqref{H:Far}, then $\K\in\Llocspace[1][\RdmenoOrigine]$.  
\end{lemma}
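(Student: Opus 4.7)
The plan is to handle the \emph{if} direction via inclusions of sets and the \emph{only if} direction via a compactness argument. For the \emph{if} direction, I observe that if \eqref{eq:KintInfty2} holds for every $r>0$, then \eqref{H:Far} fails trivially; alternatively, should there exist $\hzero\in\RdmenoOrigine$ such that every neighbourhood of $\hzero$ carries infinite $\K$-mass, then choosing any $r<\EucNorm{\hzero}$ produces an open ball around $\hzero$ contained in $\comp{\Br}$, so $\int_{\comp{\Br}}\K=+\infty$ and \eqref{H:Far} fails again.

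For the \emph{only if} direction, I assume $\K$ does not satisfy \eqref{H:Far}, so that there exists $r_{0}>0$ with $\int_{\comp{B_{r_{0}}}}\K=+\infty$. If \eqref{eq:KintInfty2} holds for every $r>0$, I am done. Otherwise, by monotonicity of $r\mapsto\int_{\comp{\Br}}\K$, there exists $r^{*}>r_{0}$ with $\int_{\comp{B_{r^{*}}}}\K<+\infty$; subtracting, the closed annulus $A\coloneqq\graffe{\x\in\Rd:r_{0}\leq\EucNorm{\x}\leq r^{*}}$ satisfies $\int_{A}\K=+\infty$.

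The decisive step is to extract a point $\hzero\in A$ at which $\K$ fails to be locally integrable. I will argue by contradiction: were $\K$ locally integrable at every point of $A$, then each point would admit an open neighbourhood with finite $\K$-integral, and compactness of $A$ would yield a finite subcover producing $\int_{A}\K<+\infty$, contradicting what was just shown. Any such $\hzero$ lies in $A$, hence $\hzero\neq0$, and every one of its neighbourhoods has infinite $\K$-integral, which is precisely \eqref{eq:KintInfty}.

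The ``in particular'' clause then follows at once from the dichotomy just proved, since any compact $C\subset\RdmenoOrigine$ is contained in $\comp{\Br}$ for any $r<\distance\tonde{\graffe{0},C}$, so \eqref{H:Far} forces $\int_{C}\K<+\infty$. The main point requiring care is identifying which of the two alternatives must hold once \eqref{H:Far} fails but \eqref{eq:KintInfty2} does not; this is exactly where the finite-subcover argument intervenes, while the rest reduces to monotonicity of the radial integrals and elementary set inclusions.
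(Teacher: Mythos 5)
Your proof is correct and follows essentially the same route as the paper's: the trivial implication when \eqref{eq:KintInfty2} holds for every $r$, reduction to an annulus of infinite $K$-mass when it does not, a finite-subcover/compactness argument on that closed annulus to produce the bad point $\hzero$, and elementary set inclusions for the converse and the ``in particular'' clause. The only (cosmetic) differences are that you justify the annulus reduction explicitly via monotonicity and subtraction of the finite tail, and in the converse you take $r<\EucNorm{\hzero}$ so that a whole ball around $\hzero$ sits inside $\comp{\Br}$, which is if anything slightly cleaner than the paper's choice.
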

\begin{proof}
    
Let $\K$ be a kernel which does not satisfy assumption \eqref{H:Far}.

If $\K\notin\Lspace[1][\Rd\setminus\Br]$ for every $\r>0$,
then \eqref{eq:KintInfty2} holds. 
Therefore, we can assume that there exist $0<\r<\rp$
s.t. $\K\notin\Lspace[1][\Brp\setminus\Br]$. 
By contradiction, assume that for every $\hzero\in\closure{\Brp\setminus\Br}$
there exists $\Om$ neighborhood of $\hzero$
such that \eqref{eq:KintInfty} does not hold. 
By compactness of $\closure{\Brp\setminus\Br}$ we can find a finite open
cover of $\Brp\setminus\Br$ such that the integral of $\K$ on each 
element of the cover is finite. This contradicts the fact that $\K$ is not
integrable on $\Brp\setminus\Br$. 
Therefore, the thesis holds for some
$\hzero\in\closure{\Brp\setminus\Br}\subset\RdmenoOrigine$.

Conversely, if there exists $\hzero\in\RdmenoOrigine$ such that
\eqref{eq:KintInfty} holds for every $\Om\subset\Rd$ neighborhood of $\hzero$,
we put $\r\coloneqq\abs{\hzero}>0$ and observe 
that $\K\not\in\Lspace[1][\Rd\setminus\Br]$.
To conclude we simply recall that if we assume \eqref{eq:KintInfty2},
then \eqref{H:Far} cannot hold by definition. 
\end{proof}

\begin{proposition}[][res:NonFarBVKtrivial]
Let $\K$ be a kernel which does not satisfy assumption \eqref{H:Far}.
Then, $\WKp=\graffe{0}$ for every $\p\in\unoinfinitoescluso$.     
\end{proposition}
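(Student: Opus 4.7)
The plan is to fix $u \in \WKp$ and show $u = 0$ using the representation \eqref{eq:seminormWKpalt},
\[
[u]_{\WKp}^p = \int_{\Rd} \|\Delta_h u\|_{L^p(\Rd)}^p\, K(h)\, dh,
\]
combined with the dichotomy supplied by \cref{res:NonFar}: either (a) $\int_{\Rd \setminus B_r} K = +\infty$ for every $r > 0$, or (b) there exists $h_0 \in \Rd \setminus \{0\}$ such that $\int_V K = +\infty$ on every neighborhood $V$ of $h_0$. In each case I will exhibit a set of $h$-values on which $K$ is non-integrable and on which $\|\Delta_h u\|_{L^p(\Rd)}$ is bounded below by a positive constant whenever $u \not\equiv 0$, thereby forcing $[u]_{\WKp} = +\infty$ and contradicting $u \in \WKp$.

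In case (a) I will invoke the classical large-translation asymptotic
\[
\lim_{|h|\to +\infty}\|u(\cdot + h) - u(\cdot)\|_{L^p(\Rd)}^p = 2\|u\|_{L^p(\Rd)}^p,
\]
proved by truncating $u$ to $u_R = u\,\chi_{B_R}$ and observing that the supports of $u_R$ and of $u_R(\cdot + h)$ are disjoint once $|h| > 2R$ (so the norm of the difference equals $2^{1/p}\|u_R\|_{L^p}$ exactly), then sending $R\to+\infty$ via the triangle inequality. If $u \not\equiv 0$, this yields $R_0$ with $\|\Delta_h u\|_{L^p}^p \geq \|u\|_{L^p(\Rd)}^p$ for $|h|\geq R_0$, so $[u]_{\WKp}^p \geq \|u\|_{L^p(\Rd)}^p \int_{\Rd \setminus B_{R_0}} K = +\infty$.

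Case (b) is slightly more delicate. First I will establish that a function $u \in L^p(\Rd)$ with $p < +\infty$ satisfying $u(\cdot + h_0) = u(\cdot)$ a.e.\ is necessarily zero: choosing coordinates so that $h_0$ aligns with the first axis and invoking Fubini, the $p$-power of a nontrivial $h_0$-periodic function has infinite integral along any fiber parallel to $h_0$ on which it does not vanish, contradicting $u \in L^p(\Rd)$. Consequently, when $u \neq 0$ we have $\|\Delta_{h_0} u\|_{L^p(\Rd)} > 0$; combined with the strong continuity of translations in $L^p$, this produces a neighborhood $V$ of $h_0$ and a constant $c > 0$ with $\|\Delta_h u\|_{L^p(\Rd)} \geq c$ on $V$, whence $[u]_{\WKp}^p \geq c^p \int_V K = +\infty$.

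The main obstacle is case (b): unlike case (a), here $\|\Delta_h u\|_{L^p}$ is not automatically bounded away from zero at the chosen $h_0$ (it could a priori vanish, if $u$ were $h_0$-periodic), so the periodicity-exclusion step is essential and is the only place where the hypothesis $p < +\infty$ truly enters. Once that is in hand, continuity of translations upgrades the pointwise lower bound at $h_0$ to a uniform one on a full neighborhood, which is what is needed to exploit the divergence of $\int_V K$.
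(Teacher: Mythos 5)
Your proof is correct, but it takes a genuinely different route from the paper's. The paper first treats the case $u\in C^0(\R^d)\cap W^{K,p}(\R^d)$: it sets $U(h)=\|\Delta_h u\|_{L^p(\R^d)}^p$ together with an analogous quantity $U(\infty)$, uses Fatou's lemma to get lower semicontinuity, deduces $U(h_0)=0$ (resp.\ $U(\infty)=0$) from the local (resp.\ at-infinity) non-integrability of $K$ given by \cref{res:NonFar}, and then exploits the pointwise continuity of $u$ (a continuous $h_0$-periodic $L^p$ function vanishes; a pointwise $\liminf$ argument at infinity); the general case is then reduced to the continuous one via the density result of \cref{res:BVKDense}. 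You instead argue directly for an arbitrary $u\in L^p(\R^d)$ with finite seminorm: in the at-infinity alternative you invoke the classical limit $\|u(\cdot+h)-u\|_{L^p}^p\to 2\|u\|_{L^p}^p$ as $|h|\to\infty$, which gives a uniform positive lower bound on $\|\Delta_h u\|_{L^p}$ outside a large ball; in the finite-singularity alternative you combine a Fubini argument (a nonzero measurable $h_0$-periodic function cannot lie in $L^p(\R^d)$ for $p<\infty$) with the strong continuity of translations in $L^p$ to get a uniform lower bound on a full neighborhood of $h_0$. Both arguments rest on the dichotomy of \cref{res:NonFar}, but yours is self-contained at the level of $L^p$, needs neither the density proposition nor pointwise continuity of $u$, and replaces the paper's somewhat delicate pointwise $\liminf$ reasoning at infinity with a clean norm limit; the paper's version, in exchange, recycles machinery it has already established. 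Your closing remark correctly isolates the only point where $p<+\infty$ is genuinely needed.
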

\begin{proof}
    
Let $\K$ be a kernel which does not satisfy assumption \eqref{H:Far} and
let $\p\in\unoinfinitoescluso$.

\textit{Case 1.}
Let $\u\in\Cspace[0][\Rd]\cap\WKp$.
Having in mind 
\eqref{eq:seminormWKpalt},
we define $\U[\h]\coloneqq\normLpRd{\Deltahu}^{\p}$ for every $\h\in\Rd$
and 
$\U[\infty]\coloneqq\intRd\liminf_{\h\to\infty}\abs{\Deltahu[\x]}^{\p}\integralde\x$
, so that

\begin{equation*}
    \seminormWKp[\u]^{\p}
    =
    \intRd
    \U[\h]
    \K[\h]
    \integralde\h.
\end{equation*}

Observe that $0\leq U(h) \leq 2^p\|u\|^p_{L^p(\R^d)}$ for all $h\in\R^d\cup \{\infty\}$ and thanks to Fatou's Lemma it is lower semicontinuous.

By \cref{res:NonFar} either there exists $\hzero\in\RdmenoOrigine$ 
such that \eqref{eq:KintInfty} holds for every $\Om$ neighborhood of $\hzero$,
or \eqref{eq:KintInfty2} holds for every $\r>0$.
In the first case, we claim that $\U[\hzero]=0$.
Indeed, if $\U[\hzero]>0$, by lower semicontinuity, we can find a neighborhood $\Om$ of $\hzero$ 
such that $\U>\U[\hzero]/2$ on $\Om$ and consequently obtain
\begin{equation*}
    +\infty
    >
    2\seminormWKp[\u]^{\p}
    \geq
    \integral{\Om}\U[\h]\K[\h]\integralde\h
    \geq
    \frac{\U[\hzero]}{2}\integral{\Om}\K[\h]\integralde\h,
\end{equation*}
which contradicts \eqref{eq:KintInfty}. This implies that $\U[\hzero]=0$.
In the second case, we claim that $\U[\infty]=0$.
Indeed, if $\U[\infty]>0$, by lower semicontinuity, we can find $\r>0$
such that $\U>\U[\infty]/2$ on $\comp{\Br}$ and consequently obtain
\begin{equation*}
    +\infty
    >
    2\seminormWKp[\u]^{\p}
    \geq
    \integral{\comp{\Br}}\U[\h]\K[\h]\integralde\h
    \geq
    \frac{\U[\infty]}{2}\integral{\comp{\Br}}\K[\h]\integralde\h,
\end{equation*}
which contradicts \eqref{eq:KintInfty2}

In the first case, the condition
$\U[\hzero]=0$ implies that $\u[\x+\hzero]=\u[\x]$ for every $\x\in\Rd$,
in other words $\u$ is periodic with respect to translations of vectors in $\hzero\Z$.
Since $\u\in\LpRd\cap\Cspace[0][\Rd]$, $\u\equiv0$.

In the second case, the condition
$\U[\infty]=0$ implies that $\liminf_{\h\to\infty}\abs{\Deltahu[\x]}=0$
for $\almostev$ $\x\in\Rd$. If we assume that $\u\not\equiv0$ then, thanks to the
continuity of $\u$ we can find $\xzero\in\Rd$ such that
$\u[\xzero]\neq0$
and $\liminf_{\h\to\infty}\abs{\u[\xzero+\h]-\u[\xzero]}=0$.
In particular, $\liminf_{\h\to\infty}\abs{\u[\xzero+\h]}\geq\abs{\u[\xzero]}>0$, which contradicts 
the fact that $\u[.+\xzero]\in\LpRd$. 

\textit{Case 2.}
Let $\u\in\WKp$.
Thanks to \cref{res:BVKDense} we can find a sequence $\uSEQUENCE\subset\Cspace[\infty][\Rd]\cap\WKp$
such that $\uk\to\u$ in $\WKp$.
Since by the previous case we know that $\uk\equiv0$ for all $\k\in\N$, we deduce that $\u\equiv0$ $\almostev$ in $\Rd$.
\end{proof}

\section{An extension result}\label{sec:extension}

In this section we prove the extension result; that is, the continuous embedding of $W^{K,p}(\Omega)$ in $W^{K,p}(\R^d)$, provided that the boundary of $\Omega$ is regular enough and that the kernel $K$ satisfies suitable assumptions. Our proof is obtained by adapting the arguments contained in Section $5$ of \cite{DiNPalVal12-MR2944369} and follows by three preliminary lemmas. The first one furnishes the natural extension of a function in $W^{K,p}(\Omega)$ having compact support in $\Omega$. In the following, we will let $C$ denote a positive constant that may change from line to line.

\begin{lemma}\label{lemma:vanishing} Let $K$ be a kernel satisfying \eqref{H:Far}, $\Omega \subset \R^d$ an open set, and $u\in W^{K,p}(\Omega)$ with $p\in[1,+\infty)$. Assume that there exists a compact set $V\subset \Omega$ such that $u=0$ a.e. in $\Omega\setminus V$, then the function $\widetilde{u}$ defined as
\begin{equation}\label{extensionvanishing}
\widetilde{u}(x):=
\begin{cases}
u(x) & \text{ if } x\in\Omega, \\[3pt]
0 & \text{ if } x\in \Omega^c
\end{cases}    
\end{equation}
belongs to $W^{K,p}(\R^d)$ and there exists a positive constant $C=C(K, V, \Omega)$ such that 
\begin{equation*}
    \|\widetilde{u}\|_{W^{K,p}(\R^d)} \leq C\|u\|_{W^{K,p}(\Omega)}. 
\end{equation*}
\end{lemma}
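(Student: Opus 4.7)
The plan is a direct decomposition of the double integral defining $[\widetilde{u}]_{W^{K,p}(\R^d)}^p$ according to the four pieces of $\R^d\times\R^d$ obtained by placing each variable in $\Omega$ or in $\Omega^c$. Since $\widetilde{u}\equiv 0$ on $\Omega^c$, the contribution from $\Omega^c\times\Omega^c$ vanishes, the contribution from $\Omega\times\Omega$ equals exactly $[u]_{W^{K,p}(\Omega)}^p$, and the two mixed pieces read
\[
I_1:=\int_{\Omega}\int_{\Omega^c}|u(x)|^p K(x-y)\,dy\,dx,\qquad I_2:=\int_{\Omega^c}\int_{\Omega}|u(y)|^p K(x-y)\,dy\,dx.
\]

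Since $u$ vanishes on $\Omega\setminus V$, in $I_1$ the outer integral is actually restricted to $V$, and analogously for $I_2$. The key observation is that
\[
d_0:=\mathrm{dist}(V,\Omega^c)>0,
\]
because $V$ is compact and $\Omega^c$ is closed and disjoint from $V$. For $x\in V$ and $y\in\Omega^c$ we therefore have $|x-y|\ge d_0$, so the translation invariance $z=y-x$ of Lebesgue measure yields
\[
\int_{\Omega^c}K(x-y)\,dy\le\int_{\R^d\setminus B_{d_0}}K(z)\,dz=:C_0<+\infty,
\]
where finiteness is precisely assumption \eqref{H:Far}. The same bound holds for the inner integral of $I_2$ (after swapping the roles of $x$ and $y$).

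Plugging these two estimates in yields
\[
[\widetilde{u}]_{W^{K,p}(\R^d)}^p\le [u]_{W^{K,p}(\Omega)}^p+2\,C_0\,\|u\|_{L^p(\Omega)}^p,
\]
which, together with the trivial identity $\|\widetilde{u}\|_{L^p(\R^d)}=\|u\|_{L^p(\Omega)}$ and the elementary inequality $(a^p+b^p)^{1/p}\le a+b$, produces $\|\widetilde{u}\|_{W^{K,p}(\R^d)}\le C\,\|u\|_{W^{K,p}(\Omega)}$ with $C=C(K,V,\Omega)$ depending only on $d_0$ and on $\int_{\R^d\setminus B_{d_0}}K$. I do not anticipate any genuine obstacle here: the only input needed on the kernel is \eqref{H:Far}, which is exactly the integrability-at-infinity statement that controls the two cross terms. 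Neither the doubling hypothesis \eqref{H:Dou}, nor the decay hypothesis \eqref{H:Dec}, nor symmetry of $K$ enters the argument at this stage; they will only become relevant later, when one has to extend functions that do not vanish near $\partial\Omega$.
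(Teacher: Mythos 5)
Your argument is correct and is essentially the same as the paper's: the same four-way decomposition of the double integral, the same restriction of the mixed terms to $V$, and the same use of $\distance(V,\Omega^c)>0$ together with \eqref{H:Far} to bound the inner integrals by $\int_{\R^d\setminus B_{d_0}}K$. You even make explicit (via compactness of $V$) the positivity of the distance that the paper leaves implicit, so nothing is missing.
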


\begin{proof}
It holds that $\|\widetilde{u}\|_{L^p(\R^d)}=\|u\|_{L^p(\Omega)}$. As for the seminorm, we have
\begin{align*}
    \int_{\R^d}\int_{\R^d} |\widetilde{u}(x)-\widetilde{u}(y)|^p K(x-y)\,\de\x\de\y & = \int_{\Omega}\int_{\Omega} |u(x)-u(y)|^p K(x-y)\,\de\x\de\y \\
    & \,\,\quad  + \int_{\Omega}|u(x)|^p\Bigl\{\int_{\Omega^c}  K(x-y)\,\de\y\Bigr\}\,\de\x \\
     & \,\,\quad  + \int_{\Omega}|u(y)|^p\Bigl\{\int_{\Omega^c}  K(x-y)\,\de\x\Bigr\}\,\de\y \\
     & = [u]^p_{W^{K,p}(\Omega)} + \int_V |u(x)|^p  \Bigl\{\int_{x-\Omega^c} K(z)\,\de\z\Bigr\}\,\de\x \\
     & \,\,\quad  + \int_V |u(y)|^p  \Bigl\{\int_{\Omega^c-y}  K(z)\,\de\z\Bigr\}\,\de\y \\
     & \leq [u]^p_{W^{K,p}(\Omega)} + 2 \|u\|^p_{L^p(\Omega)} \int_{B^c_r(0)} K(z)\,\de\z,
\end{align*}
where we performed two times the change of variables $z:=x-y$, and the last inequality follows by the fact that there exists $r>0$ such that the sets $x-\Omega^c$ and $\Omega^c-y$ are contained in $B^c_r(0)$ for all $x,y\in V$. Finally, the thesis follows by \eqref{H:Far}.
\end{proof}

The following lemma allows us to extend functions to a larger set $\Omega$ provided that this can be obtained by reflecting the initial domain with respect to a hyperplane.

\begin{lemma}\label{lemma:reflection} Let $\Omega \subset \R^d$ be an open set symmetric with respect to the coordinate $x_d$ and let $K$ be a kernel satisfying \eqref{H:Far}, \eqref{H:Dec}, and \eqref{H:Dou}. Consider the sets $\Omega_+:=\{x\in\Omega : x_d >0\}$ and $\Omega_-:=\Omega\setminus \Omega_+$, and let $u\in W^{K,p}(\Omega_+)$ with $p\in[1,+\infty)$. Then the function $\overline{u}$ defined as
\begin{equation}\label{extensionreflection}
\overline{u}(x):=
\begin{cases}
u(x', x_d) & \text{ if } x\in\Omega_+ \\[3pt]
u(x',-x_d) & \text{ if } x\in \Omega_-
\end{cases},    
\end{equation}
belongs to $W^{K,p}(\Omega)$ and there exists a positive constant $C=C(c_0, C_D,|\cdot|_*, K)$ such that 
\begin{equation*}
    \|\widetilde{u}\|_{W^{K,p}(\Omega)} \leq C\|u\|_{W^{K,p}(\Omega_+)}. 
\end{equation*}
\end{lemma}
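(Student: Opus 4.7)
The plan is to begin with the trivial identity $\|\overline{u}\|_{L^p(\Omega)}^p = 2\|u\|_{L^p(\Omega_+)}^p$ and decompose the seminorm as
\begin{equation*}
[\overline{u}]_{W^{K,p}(\Omega)}^p = I_{++} + I_{--} + 2\,I_{+-},
\end{equation*}
where $I_{\sigma\tau}$ denotes the double integral over $\Omega_\sigma\times\Omega_\tau$ of $|\overline{u}(x)-\overline{u}(y)|^p K(x-y)$. The first piece is immediately $[u]_{W^{K,p}(\Omega_+)}^p$. In the other two I would change variables through the reflection $R\colon (z',z_d)\mapsto(z',-z_d)$, which is measure-preserving and sends $\Omega_-$ onto $\Omega_+$, rewriting them as integrals on $\Omega_+\times\Omega_+$ of $|u(x)-u(y)|^p$ weighted by $K(R(x-y))$ and $K(x-Ry)$, respectively. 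The whole problem reduces to a pointwise comparison of these reflected kernels with $K(x-y)$, up to a remainder controlled via \eqref{H:Far}.

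The geometric inputs are that $|Rz|_2 = |z|_2$ and that $|x-y|_2 \leq |x-Ry|_2$ whenever $x_d,y_d>0$. Equivalence of norms on $\R^d$ then yields a constant $\alpha = \alpha(|\cdot|_*) \geq 1$ such that $|Rz|_* \leq \alpha|z|_*$ and $|x-y|_* \leq \alpha|x-Ry|_*$. Setting $k := \lceil \log_2 \alpha \rceil$, $R_0 := D/2^{k-1}$, and $C := c_0^{-1} C_D^k$, the key pointwise estimate reads
\begin{equation*}
K(Rz) \leq C\,K(z) \text{ for } |Rz|_* \leq R_0, \qquad K(x-Ry) \leq C\,K(x-y) \text{ for } |x-Ry|_* \leq R_0.
\end{equation*}
To prove it, I would iterate \eqref{H:Dou} at most $k$ times (the bound $\leq R_0$ keeps every intermediate norm inside its range of validity) to get $K(\cdot) \leq C_D^k K(v)$ for a point $v$ of $|\cdot|_*$-norm equal to $2^k$ times the original; since $2^k \geq \alpha$, this norm is at least $|z|_*$ (resp.\ $|x-y|_*$), so \eqref{H:Dec} gives $K(v) \leq c_0^{-1}K(z)$ (resp.\ $K(v) \leq c_0^{-1}K(x-y)$), closing the estimate.

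With the pointwise bound at hand, I would split each transformed integral into a \emph{near} part (reflected-kernel argument of $|\cdot|_*$-norm $\leq R_0$) and a \emph{far} part (of $|\cdot|_*$-norm $>R_0$). The near part is bounded by $C\,[u]_{W^{K,p}(\Omega_+)}^p$ through the pointwise estimate. On the far part I would use $|u(x)-u(y)|^p\leq 2^{p-1}(|u(x)|^p + |u(y)|^p)$, Fubini, and the measure-preserving change of variable $w=R(x-y)$ or $w=x-Ry$ to reduce the inner integral to $\int_{\{|w|_*>R_0\}} K(w)\,\de w$, which is finite by \eqref{H:Far}; this contributes at most $C\|u\|_{L^p(\Omega_+)}^p$. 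Summing all pieces yields $\|\overline{u}\|_{W^{K,p}(\Omega)} \leq C\,\|u\|_{W^{K,p}(\Omega_+)}$.

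The main obstacle is that for a generic norm $|\cdot|_*$ the reflection $R$ is not an isometry, so the simple observation $K\circ R = K$ used for the Euclidean fractional kernel in \cite{DiNPalVal12-MR2944369} is no longer available. The doubling assumption \eqref{H:Dou} is exactly what bridges this gap: it lets us inflate a point whose $|\cdot|_*$-norm is slightly below the target to one of comparable or larger norm, at a bounded multiplicative cost, after which \eqref{H:Dec} supplies the required pointwise domination.
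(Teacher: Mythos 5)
Your proposal is correct and follows essentially the same route as the paper: the same decomposition of the seminorm over $\Omega_\pm\times\Omega_\pm$, reflection changes of variables, the Euclidean inequality $|x-Ry|_2\geq|x-y|_2$ transferred to $|\cdot|_*$ by equivalence of norms, an iterated application of \eqref{H:Dou} followed by \eqref{H:Dec} to dominate the reflected kernel at short range, and \eqref{H:Far} for the long-range remainder. The only slip is writing the cross contribution as $2I_{+-}$, which presupposes $I_{+-}=I_{-+}$ and hence a symmetric kernel; since $K$ is not assumed symmetric here you should keep $I_{+-}$ and $I_{-+}$ as separate terms, but both are estimated by the identical argument, so nothing substantive is lost.
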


\begin{proof}
    It is immediate to observe that $\|u\|^p_{L^p(\Omega)}= 2 \|u\|^p_{L^p(\Omega_+)}$. As for the seminorms, it holds
    \begin{align*}
    & \int_{\Omega}\int_{\Omega} |\overline{u}(x)-\overline{u}(y)|^p K(x-y)\,\de\x\de\y \\
    & = [u]^p_{W^{K,p}(\Omega_+)} \\
    & \,\, \quad + \int_{\Omega_+}\int_{\Omega_-} |u(x)-u(y',-y_d)|^pK(x-y)\,\de\x\de\y  \\
    & \,\, \quad + \int_{\Omega_-}\int_{\Omega_+} |u(x',-x_d)-u(y)|^pK(x-y)\,\de\x\de\y \\
    & \,\, \quad  + \int_{\Omega_-}\int_{\Omega_-} |u(x',-x_d)-u(y',-y_d)|^p K(x-y)\,\de\x\de\y.
    \end{align*}
We estimate separately the last three addends.

By the change of variables $z:=(y', -y_d)$, we have
\begin{align*}
    & \int_{\Omega_+}\int_{\Omega_-} |u(x)-u(y',-y_d)|^pK(x-y)\,\de\x\de\y \\
    & = \int_{\Omega_+}\int_{\Omega_+} |u(x)-u(z)|^pK((x_1-z_1,...,x_{d-1}-z_{d-1}, x_d+z_d))\,\de\x\de\z.
\end{align*}
Note that
\begin{equation*}
    \EucNorm{\x-\y}=\EucNorm{(x_1-z_1,...,x_{d-1}-z_{d-1}, x_d+z_d)}\geq \EucNorm{\x-\z},
\end{equation*}
hence, by the equivalence of the norms, there exists a positive constant $c_*$ such that
\begin{equation*}
    c_*|(x_1-z_1,...,x_{d-1}-z_{d-1}, x_d+z_d)|_*\geq |x-z|_*,
\end{equation*}
and then, by \eqref{H:Dec}, we obtain
\begin{align*}
    & \int_{\Omega_+}\int_{\Omega_+} |u(x)-u(z)|^pK((x_1-z_1,...,x_{d-1}-z_{d-1}, x_d+z_d))\,\de\x\de\z \\
    & \leq \frac{1}{c_0}\int_{\Omega_+}\int_{\Omega_+} |u(x)-u(z)|^pK\Bigl(\frac{x-z}{c_*}\Bigr)\,\de\x\de\z. 
\end{align*}
Now we argue differently for short-range and long-range interactions. Let $m\in \mathbb{N}^+$ be such that 
\begin{equation*}
\frac{2^m}{c_*}\geq1.
\end{equation*}
Applying $m$ times \eqref{H:Dou}, and then applying \eqref{H:Dec}, we obtain
\begin{align}
    & \notag \iint_{\Omega_+^2\cap \{|x-z|_*\leq \frac{c_*D}{2^{m-1}}\}} |u(x)-u(z)|^pK\Bigl(\frac{x-z}{c_*}\Bigr)\,\de\x\de\z \\ \notag
    & \leq C_D^m \int_{\Omega_+}\int_{\Omega_+} |u(x)-u(z)|^pK\Bigl(\frac{2^m(x-z)}{c_*}\Bigr)\,\de\x\de\z \\ \label{reflection0.1}
    & \leq \frac{C_D^m}{c_0} \int_{\Omega_+}\int_{\Omega_+} |u(x)-u(z)|^pK(x-z)\,\de\x\de\z, 
\end{align}
where the iterated application of \eqref{H:Dou} is made possible by the fact that
\begin{equation*}
    \frac{2^k}{c_*}|x-z|_*\leq D
\end{equation*}
for all $k\in\{0,...,m-1\}$. As for long-range interactions, by the change of variables $w:=\frac{x-z}{c_*}$, we have
\begin{align}
    & \notag \iint_{\Omega_+^2\cap \{|x-z|_*>\frac{c_*D}{2^{m-1}}\}} |u(x)-u(z)|^pK\Bigl(\frac{x-z}{c_*}\Bigr)\,\de\x\de\z \\ \label{reflection0.2}
    & \leq 2^{p-1}\|u\|^p_{L^p(\Omega_+)}c_*^d\int_{\{w\in\R^d : |w|_* > \frac{D}{2^{m-1}}\}} K(w)\,\de w.
\end{align}
Gathering \eqref{reflection0.1} and \eqref{reflection0.2}, and using \eqref{H:Far} together with the equivalence of the norms, we obtain
\begin{equation}\label{reflection1}
        \int_{\Omega_+}\int_{\Omega_-} |u(x)-u(y',-y_d)|^pK(x-y)\,\de\x\de\y \leq C \|u\|_{W^{K,p}(\Omega_+)},
    \end{equation}
$C$ being a positive constant depending on $K, c_*, C_D$, and $c_0$.

With the same argument, we get
    \begin{equation}\label{reflection2}
        \int_{\Omega_-}\int_{\Omega_+} |u(x',-x_d)-u(y)|^pK(x-y)\,\de\x\de\y \leq C\|u\|_{W^{K,p}(\Omega_+)}.
    \end{equation}

Finally, we treat the last summand. By the changes of variables $z:=(x',-x_d)$ and $w:=(y', -y_d)$, we have
\begin{align*}
    & \int_{\Omega_-}\int_{\Omega_-} |u(x',-x_d)-u(y',-y_d)|^pK(x-y)\,\de\x\de\y \\
    & = \int_{\Omega_+}\int_{\Omega_+} |u(z)-u(w)|^pK((z_1-w_1,...,z_{d-1}-w_{d-1}, -z_d+w_d))\,\de\z\de w.
\end{align*}
As before, we obtain that
\begin{equation*}
    c_*|(z_1-w_1,...,z_{d-1}-w_{d-1}, -z_d+w_d)|_*\geq |z-w|_*,
\end{equation*}
and then, by \eqref{H:Dec}, we get
\begin{align*}
    & \int_{\Omega_+}\int_{\Omega_+} |u(z)-u(w)|^pK((z_1-w_1,...,z_{d-1}-w_{d-1}, -z_d+w_d))\,\de\z\de w \\
    & \leq \frac{1}{c_0}\int_{\Omega_+}\int_{\Omega_+} |u(z)-u(w)|^pK\Bigl(\frac{z-w}{c_*}\Bigl)\,\de\z\de w. 
\end{align*}
Reasoning as for the first two addends, we obtain
\begin{equation}\label{reflection3}
        \int_{\Omega_-}\int_{\Omega_-} |u(x',-x_d)-u(y',-y_d)|^pK(x-y)\,\de\x\de\y \leq C \|u\|_{W^{K,p}(\Omega_+)},
    \end{equation}
$C$ being a positive constant depending on $K, c_*, C_D$, and $c_0$. Putting together \eqref{reflection1}, \eqref{reflection2}, and \eqref{reflection3} we obtain the thesis.
    \end{proof}

\begin{remark}
    If the norm $|\cdot|_*$ is symmetric with respect to the hyperplane $\{x_d=0\}$ in the sense that 
    \begin{equation}\label{symmetry}
        |(x_1,...,x_d)|_*=|(x_1,...,-x_d)|_*
    \end{equation}
    for all $x\in\R^d$, we do not need the doubling assumption in Lemma \ref{lemma:reflection}. Indeed, let $x\in \Omega_+, y\in \Omega_-$, and $z:=(y', -y_d)\in \Omega_+$. There exists $\lambda\in [0,1]$ such that 
    \begin{align*}
        z-x & =(y_1-x_1,...,y_{d-1}-x_{d-1},-y_d-x_d) \\
        & = (y_1-x_1,...,y_{d-1}-x_{d-1},\lambda(y_d-x_d)+(1-\lambda)(x_d-y_d));
    \end{align*}
    then, by the convexity of the norm, we obtain
    \begin{align*}
        |z-x|_* & \leq \lambda|(y_1-x_1,...,y_{d-1}-x_{d-1},y_d-x_d)|_*\\
        & \quad \, +(1-\lambda)|(y_1-x_1,...,y_{d-1}-x_{d-1},x_d-y_d)|_* \\
        & = |y-x|_*,
    \end{align*}
    where the last equality follows by \eqref{symmetry}. The above inequality allows to carry out the proof of the previous lemma without making use of \eqref{H:Dou}. Moreover, this observation simplifies the argument as it allows to not reasoning differently for short-range and long-range interactions. Note that, if the norm $|\cdot|_*$ is symmetric with respect to a different hyperplane $\Pi$, then the above lemma is still valid without requiring \eqref{H:Dou} provided that the set $\Omega$ can be obtained by reflecting the set $\Omega_+$ with respect to $\Pi$.
\end{remark}

The last lemma is needed to resort to an argument involving partition of unity that we will employ in the proof of the extension result.

\begin{lemma}\label{lemma:cutoff} Let $K$ be a kernel satisfying \eqref{H:Nts}, $\Omega \subset \R^d$ an open set, $u\in W^{K,p}(\Omega)$ with $p\in[1,+\infty)$, and $\psi$ a Lipschitz continuous function compactly supported in $\Omega$ such that $0\leq \psi\leq 1$. Then the function $\psi u$ belongs to $W^{K,p}(\Omega)$ and, letting $\text{Lip}\psi$ denote the Lipschitz constant of $\psi$, there exists a positive constant $C=C(\text{Lip}\psi,K)$ such that  
\begin{equation*}
    \|\psi u\|_{W^{K,p}(\Omega)} \leq  C\|u\|_{W^{K,p}(\Omega)}.
\end{equation*}
\end{lemma}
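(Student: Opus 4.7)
The plan is to estimate the $L^p$ norm and the seminorm separately. The $L^p$ bound is trivial: since $0\le\psi\le 1$ and $\supp\psi\subset\Omega$, one has $\|\psi u\|_{L^p(\Omega)}\le\|u\|_{L^p(\Omega)}$. For the seminorm, the starting point is the algebraic identity
\begin{equation*}
\psi(x)u(x)-\psi(y)u(y)=\psi(x)\bigl(u(x)-u(y)\bigr)+u(y)\bigl(\psi(x)-\psi(y)\bigr),
\end{equation*}
combined with the elementary inequality $|a+b|^p\le 2^{p-1}(|a|^p+|b|^p)$ and the fact that $|\psi(x)|\le 1$, giving pointwise
\begin{equation*}
|\psi(x)u(x)-\psi(y)u(y)|^p\le 2^{p-1}|u(x)-u(y)|^p+2^{p-1}|u(y)|^p|\psi(x)-\psi(y)|^p.
\end{equation*}

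Multiplying by $K(x-y)$ and integrating over $\Omega\times\Omega$, the first summand directly yields $2^{p-1}[u]_{W^{K,p}(\Omega)}^p$. For the second summand the key observation is that one can bound $|\psi(x)-\psi(y)|$ in two ways: by $\operatorname{Lip}\psi\cdot|x-y|_2$ (Lipschitz bound) and by $1$ (since $0\le\psi\le 1$). Taking the minimum we obtain
\begin{equation*}
|\psi(x)-\psi(y)|^p\le \max\{1,(\operatorname{Lip}\psi)^p\}\min\{1,|x-y|_2^p\}.
\end{equation*}

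Hence, performing the change of variables $z:=x-y$ in the inner integral,
\begin{equation*}
\int_\Omega\!\!\int_\Omega |u(y)|^p|\psi(x)-\psi(y)|^p K(x-y)\,\de x\de y
\le C_\psi \|u\|_{L^p(\Omega)}^p\int_{\R^d}K(z)\min\{1,|z|_2^p\}\,\de z,
\end{equation*}
with $C_\psi=\max\{1,(\operatorname{Lip}\psi)^p\}$, and the last integral is finite precisely by assumption \eqref{H:Nts}. Putting the two estimates together yields
\begin{equation*}
[\psi u]_{W^{K,p}(\Omega)}^p\le 2^{p-1}[u]_{W^{K,p}(\Omega)}^p+C\,\|u\|_{L^p(\Omega)}^p,
\end{equation*}
and combining with the $L^p$ bound gives $\|\psi u\|_{W^{K,p}(\Omega)}\le C\|u\|_{W^{K,p}(\Omega)}$ with $C$ depending only on $\operatorname{Lip}\psi$ and on $\int K(z)\min\{1,|z|_2^p\}\,\de z$, as required. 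There is no real obstacle here; the only delicate point is the choice of the two-sided estimate for $|\psi(x)-\psi(y)|$, which is tailored exactly to the quantity appearing in \eqref{H:Nts} and explains why this hypothesis, rather than mere integrability away from the origin, is the right one for the statement.
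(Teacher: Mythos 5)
Your proof is correct and follows essentially the same route as the paper: the same decomposition via adding and subtracting $\psi(x)u(y)$, the $2^{p-1}$ convexity inequality, and the two-sided bound on $|\psi(x)-\psi(y)|$ matched to \eqref{H:Nts}. The only cosmetic difference is that you package the short-range/long-range dichotomy into a single $\max\{1,(\operatorname{Lip}\psi)^p\}\min\{1,|x-y|_2^p\}$ estimate, whereas the paper splits the integral over $\{|x-y|_2\le 1\}$ and $\{|x-y|_2>1\}$ explicitly.
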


\begin{proof}
    Since $0\leq \psi \leq 1$, it holds that $\|\psi u\|_{L^p(\Omega)} \leq \| u\|_{L^p(\Omega)}$. By adding and subtracting $\psi(x)u(y)$, we evaluate the seminorm as follows,
    \begin{align*}
        & \int_{\Omega}\int_{\Omega} |\psi u(x)-\psi u(y)|^p K(x-y)\,\de\x\de\y \\
        & \leq 2^{p-1} \Bigl\{ \int_{\Omega}\int_{\Omega} |\psi(x)|^p |u(x)-u(y)|^p K(x-y)\,\de\x\de\y \\
        & \,\quad + \int_{\Omega}\int_{\Omega} |u(y)|^p |\psi(x)-\psi(y)|^p K(x-y)\,\de\x\de\y \Bigr\} \\
        & \leq 2^{p-1} [u]^p_{W^{K,p}(\Omega)} \\
        & \,\quad  + 2^{p-1} \int_{\Omega}\int_{\Omega} |u(y)|^p|\psi(x)-\psi(y)|^p K(x-y)\,\de\x\de\y. 
     \end{align*}
By treating differently short-range and long-range interactions, we obtain
\begin{align*}
    & \iint_{\Omega^2\cap\{\EucNorm{\x-\y}\leq 1\} } |u(y)|^p|\psi(x)-\psi(y)|^p K(x-y)\,\de\x\de\y \\
    & \leq (\text{Lip}\psi)^p\|u\|^p_{L^p(\Omega)} \int_{B_1(0)}K(z)
    \EucNorm{\z}^{\p}\,\de\z, 
\end{align*}
and
\begin{align*}
    \iint_{\Omega^2\cap\{\EucNorm{\x-\y}> 1\} } |u(y)|^p|\psi(x)-\psi(y)|^p K(x-y)\,\de\x\de\y  \leq 2^p\|u\|^p_{L^p(\Omega)}\int_{B^c_1(0)} K(z)\,\de\z.
\end{align*}
Putting together these estimates, and making use of  \eqref{H:Nts}, which implies \eqref{H:Far}, the proof is concluded.
\end{proof}

Finally, we prove our main result.

\begin{theorem}[][thm:extension] 
    Let $\K$ be a kernel satisfying \eqref{H:Dec}, \eqref{H:Dou}, and \eqref{H:Nts}, 
    $\Omega \subset \R^d$ a bounded, open set with Lipschitz boundary and $p\in[1,+\infty)$. 
    Then $W^{K,p}(\Omega)$ is continuously embedded in $W^{K,p}(\R^d)$; that is, for every $u\in W^{K,p}(\Omega)$ there exists $\widetilde{u}\in W^{K,p}(\R^d)$ such that the restriction of $\widetilde{u}$ in $\Omega$ equals $u$ and there exists a positive constant $C=C(c_0,C_D,K, \Omega, |\cdot|_*)$ such that  
\begin{equation*}
    \|\widetilde{u}\|_{W^{K,p}(\R^d)} \leq C  \|u\|_{W^{K,p}(\Omega)}. 
\end{equation*}
\end{theorem}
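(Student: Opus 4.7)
The plan is to mimic the classical local extension theorem for $W^{1,p}$, using the three preliminary lemmas as the non-local replacements for extension by zero, reflection across a hyperplane, and multiplication by a cutoff, respectively. The key observation is that, once boundary patches can be straightened by bi-Lipschitz charts, everything reduces to extending a compactly supported $W^{K,p}$-function across a flat boundary and then transporting back.

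First, exploiting the Lipschitz regularity of $\partial\Omega$, I would cover $\overline{\Omega}$ by finitely many open sets $U_0, U_1, \ldots, U_N$, with $U_0 \Subset \Omega$ and, for $j = 1, \ldots, N$, with $U_j$ a ball centred at a point of $\partial\Omega$ on which a bi-Lipschitz chart $T_j \colon U_j \to B \subset \R^d$ flattens $\partial\Omega \cap U_j$ onto $\{x_d = 0\}$, sending $\Omega \cap U_j$ onto $B_+ := B \cap \{x_d > 0\}$. Next I would choose a smooth partition of unity $\{\psi_j\}_{j=0}^{N}$ subordinate to this cover and write $u = \sum_{j=0}^{N} \psi_j u$ on $\Omega$. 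By \cref{lemma:cutoff} each piece lies in $W^{K,p}(\Omega)$ with norm controlled by $\|u\|_{W^{K,p}(\Omega)}$.

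The interior term $\psi_0 u$ has compact support in $\Omega$, so \cref{lemma:vanishing} extends it by zero to $\R^d$ with controlled norm. For each boundary term $\psi_j u$ with $j \geq 1$, I would push it to the flattened half-ball by setting $v_j := (\psi_j u) \circ T_j^{-1} \in W^{K,p}(B_+)$, apply \cref{lemma:reflection} to obtain $\overline{v_j} \in W^{K,p}(B)$, and pull back via $\widetilde{w}_j := \overline{v_j} \circ T_j \in W^{K,p}(U_j)$, which agrees with $\psi_j u$ on $U_j \cap \Omega$. Multiplying by an auxiliary Lipschitz cutoff supported in $U_j$ (harmless by \cref{lemma:cutoff}) ensures compact support in $\Omega$-neighbourhoods, after which \cref{lemma:vanishing} produces a global extension. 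The candidate extension is then $\widetilde{u} := \widetilde{w}_0 + \sum_{j=1}^{N} \widetilde{w}_j$.

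The main obstacle, and the place where all three structural assumptions on $K$ are genuinely used, is the change-of-variables estimate needed to justify that $v \mapsto v \circ T_j$ and $v \mapsto v \circ T_j^{-1}$ are bounded operators between the relevant non-local Sobolev spaces. A change of variables replaces the integrand $|v(x) - v(y)|^p K(x-y)$ by a term involving $K(T_j(x) - T_j(y))$ (times bounded Jacobian factors), and since $K$ is neither homogeneous nor radial one cannot simply rescale. The idea is to repeat the dichotomy used in the proof of \cref{lemma:reflection}: the bi-Lipschitz character of $T_j$ makes $|T_j(x) - T_j(y)|_*$ comparable to $|x-y|_*$ up to a constant $L$, so on the short-range regime $\{|x-y|_* \leq \delta\}$ applying \eqref{H:Dou} a bounded number of times (determined by $L$) followed by \eqref{H:Dec} yields $K(T_j(x) - T_j(y)) \leq C\,K(x-y)$, while on the long-range regime the factor $|v(x)-v(y)|^p$ is estimated crudely by $2^p|v|^p$ and the residual kernel integral is finite by \eqref{H:Nts} (which in particular implies \eqref{H:Far}). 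Summing the contributions over $j$ and using the uniform control provided by the finite partition of unity then yields the claimed bound $\|\widetilde{u}\|_{W^{K,p}(\R^d)} \leq C\,\|u\|_{W^{K,p}(\Omega)}$.
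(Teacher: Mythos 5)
Your proposal is correct and follows essentially the same route as the paper: a partition of unity subordinate to a cover of $\partial\Omega$ by chart balls, extension by zero for the interior piece (\cref{lemma:vanishing}), bi-Lipschitz flattening followed by reflection (\cref{lemma:reflection}) and pull-back for the boundary pieces, with the transport estimate handled exactly as in the paper via the short-range/long-range dichotomy (iterated \eqref{H:Dou} plus \eqref{H:Dec} near the diagonal, a crude $L^p$ bound plus \eqref{H:Far} far from it). The only cosmetic difference is that you multiply by $\psi_j$ before transporting and reflecting, whereas the paper transports $u|_{\Omega\cap B_j}$ first and applies the cutoff at the end; both orders work and yield the same constant dependencies.
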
    

\begin{proof}
  Consider $\{B_j\}_{j=1}^M$ a finite cover of $\partial \Omega$ made up of open euclidean balls, $\partial \Omega \subset \bigcup_{j=1}^M B_j$, and let $\{\psi_j\}_{j=0}^M$ be a smooth partition of unity such that $\text{supp }\psi_0\subset \R^d\setminus \partial \Omega$ and $\text{supp }\psi_j\subset B_j$ for every $j\in\{1,...,M\}$. 

  By Lemma \ref{lemma:cutoff}, $\psi_0 u\in W^{K,p}(\Omega)$ and, by Lemma \ref{lemma:vanishing}, the extension $\widetilde{\psi_0u}$ belongs to $W^{K,p}(\R^d)$ with 
  \begin{equation}\label{extension:0}
      \|\widetilde{\psi_0u}\|_{W^{K,p}(\R^d)} \leq C \|u\|_{W^{K,p}(\Omega)},
  \end{equation}
  where $C$ depends on $\psi_0, \Omega, K$.

Now, we consider the restriction of $u$ to $\Omega \cap B_j$. We consider a bi-Lipschitz change of variables $T_j : Q \to B_j$ mapping $Q_+$ into $\Omega\cap B_j$, where we set
  \begin{equation*}
      Q:=\Bigl(-\frac{1}{2}, \frac{1}{2}\Bigr)^d \quad \text{ and } \quad  Q_+:=Q \cap \{x_d>0\},
  \end{equation*}
  and we define $v_j(y):=u(T_j(y)), y\in Q_+$. We aim at proving that $v_j\in W^{K,p}(Q_+)$. By the change of variables $x:=T_j^{-1}(y)$, we have that $\|v_j\|_{L^p(Q_+)} \leq C \|u\|_{L^p(\Omega\cap B_j)}$, with $C$ depending on $T_j$. As for the seminorms, we observe that there exists $L_j>0$ such that
\begin{equation*}
    \frac{1}{L_j}\EucNorm{T_j^{-1}(x)-T_j^{-1}(y)}\geq \EucNorm{\x-\y},
\end{equation*}
that, by the equivalence of the norms, implies 
\begin{equation}\label{lipestimate}
  \frac{c_*}{L_j}|T_j^{-1}(x)-T_j^{-1}(y)|_*\geq |x-y|_*.
\end{equation}
The changes of variables $x:=T_j(\hat{x})$ and $y:=T_j(\hat{y})$, \eqref{lipestimate}, and \eqref{H:Dec}, lead to
  \begin{align*}
      & \int_{Q_+}\int_{Q_+} |v_j(\hat{x})-v_j(\hat{y})|^pK(\hat{x}-\hat{y})\,\de \hat{x}\de \hat{y} \\
      & = \int_{Q_+}\int_{Q_+} |u(T_j(\hat{x}))-u(T_j(\hat{y}))|^pK(\hat{x}-\hat{y})\,\de \hat{x}\de \hat{y} \\
      & = \int_{\Omega\cap B_j}\int_{\Omega\cap B_j} |u(x)-u(y)|^pK(T_j^{-1}(x)-T_j^{-1}(y))|\text{det} T_j^{-1}|^2\,\de\x\de\y \\
      & \leq   \frac{C}{c_0} \int_{\Omega \cap B_j}\int_{\Omega \cap B_j} |u(x)-u(y)|^pK\Bigl(\frac{L_j}{c_*}(x-y)\Bigr)\,\de\x\de\y,
\end{align*}
        for some positive constant $C$ depending on $T_j$. Now we argue similarly to the proof of Lemma \ref{lemma:reflection}. We fix $m_j\in\mathbb{N}$ such that 
\begin{equation*}
\frac{2^{m_j}L_j}{c_*}\geq1,
\end{equation*}
and we apply $m_j$ times assumption \eqref{H:Dou} to estimate the contributions of the short-range interactions by
\begin{align} \notag
    & \iint_{(\Omega \cap B_j)^2\cap \{|x-y|_*\leq \frac{c_*D}{L_j2^{m_j-1}}\}} |u(x)-u(y)|^pK\Bigl(\frac{L_j}{c_*}(x-y)\Bigr)\,\de\x\de\y \\ \notag
    & \leq C_D^{m_j} \iint_{(\Omega \cap B_j)^2\cap \{|x-y|_*\leq\frac{c_*D}{L_j2^{m_j-1}}\}} |u(x)-u(y)|^pK\Bigl(2^{m_j}\frac{L_j}{c_*}(x-y)\Bigr)\,\de\x\de\y \\ \notag
    & \leq \frac{C_D^{m_j}}{c_0} \iint_{(\Omega \cap B_j)^2\cap \{|x-y|_*\leq\frac{c_*D}{L_j2^{m_j-1}}\}} |u(x)-u(y)|^pK(x-y)\,\de\x\de\y \\ \label{extension:1}
    & \leq \frac{C_D^{m_j}}{c_0}[u]^p_{W^{K,p}(\Omega)}.
\end{align}
Analogously, long-range interactions can be estimated as follows:    
  \begin{align} \notag
      & \iint_{(\Omega \cap B_j)^2\cap \{|x-y|_*>\frac{c_*D}{L_j2^{m_j-1}}\}} |u(x)-u(y)|^pK\Bigl(\frac{L_j}{c_*}(x-y)\Bigr)\,\de\x\de\y \\ \notag
      & \leq 2^{p-1}\|u\|^p_{L^p(\Omega)}\int_{\{z\in \R^d : |z|_*> \frac{c_*D}{L_j2^{m_j-1}}\}} K\Bigl(\frac{L_j}{c_*}z\Bigr)\,\de\z \\ \label{extension:2}
      & =  2^{p-1}\Bigl(\frac{c_*}{L_j}\Bigr)^d\|u\|^p_{L^p(\Omega)}\int_{ \{z\in \R^d : |z|_*> \frac{D}{2^{m_j-1}}\} } K(z)\,\de\z.
  \end{align}
Gathering \eqref{extension:1} and \eqref{extension:2}, and using \eqref{H:Far}, we infer that $v_j\in W^{K,p}(Q_+)$ with 
\begin{equation}\label{extension:3}
   \|v_j\|_{W^{K,p}(Q_+)} \leq C \|u\|_{W^{K,p}(\Omega)},  
\end{equation}
where $C$ depends on $T_j, c_0, c_*$, $C_D$ and $K$. 

We are in position to apply Lemma \ref{lemma:reflection} in order to obtain a function $\overline{v}_j\in W^{K,p}(Q)$ with
\begin{equation}\label{extension:4}
\|\overline{v}_j\|_{W^{K,p}(Q)}\leq C\|v_j\|_{W^{K,p}(Q_+)}.    
\end{equation}
We define $w_j(x):=\overline{v}_j(T_j^{-1}(x))$ for $x\in B_j$ and, arguing as for the function $v_j$, we obtain that $w_j\in W^{K,p}(B_j)$, with
\begin{equation}\label{extension:5}
    \|w_j\|_{W^{K,p}(B_j)}\leq C\|\overline{v}_j\|_{W^{K,p}(Q)},    
\end{equation}
for some $C$ depending on $T_j, c_0, c_*$, $C_D$, and $K$.

\noindent Therefore, combining Lemma \ref{lemma:cutoff} and Lemma \ref{lemma:vanishing}, we have that $\widetilde{\psi_j w_j} \in W^{K,p}(\R^d)$ with
\begin{equation}\label{extension:6}
    \|\widetilde{\psi_j w_j}\|_{W^{K,p}(\R^d)} \leq C \|u\|_{W^{K,p}(\Omega)},
\end{equation}
with the constant $C$ that, also by \eqref{extension:3}, \eqref{extension:4} and \eqref{extension:5}, depends on $T_j, c_0, c_*, C_D$, $\text{Lip}\psi_j, K$, and $\Omega$. In conclusion, the function
\begin{equation*}
    \widetilde{u}:= \widetilde{\psi_0u}+ \sum_{j=1}^M \widetilde{\psi_jw_j}
\end{equation*}
  is the desired extension as it coincides with $u$ on $\Omega$ and, gathering \eqref{extension:0} and \eqref{extension:6}, it satisfies
  \begin{equation*}
       \|\widetilde{u}\|_{W^{K,p}(\R^d)} \leq C \|u\|_{W^{K,p}(\Omega)},
  \end{equation*}
  where the positive constant $C$ depends on $C_D, c_0, c_*$, the kernel $K$, and the set $\Omega$ through the Lipschitz continuous changes of variables $\{T_j\}_{j=1}^M$ and the smooth functions $\{\psi_j\}_{j=0}^M$.
  \end{proof}

As a first application of \cref{thm:extension}, we combine it with \cref{res:BVKDense} 
and immediately obtain the following result.

\begin{corollary}
    Let $\K$ be a kernel satisfying \eqref{H:Dec}, \eqref{H:Dou}, and \eqref{H:Nts}, 
    $\Om\subset \Rd$ a bounded, open set with Lipschitz boundary and $\p\in\unoinfinitoescluso$.
    Then,
    $\Cspace[\infty][\overline{\Om}]\cap\WKpOm$ is dense in $\WKpOm$
    for every $\p\in\unoinfinitoescluso$.
\end{corollary}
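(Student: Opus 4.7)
The plan is straightforward: combine the extension theorem with the global density result and then restrict. Let me spell out the steps.

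First, given $u\in W^{K,p}(\Omega)$, I would apply \cref{thm:extension} to obtain an extension $\widetilde{u}\in W^{K,p}(\R^d)$ with $\widetilde{u}|_\Omega = u$. Next, I would apply \cref{res:BVKDense} to $\widetilde{u}$ to produce a sequence $\{v_n\}_{n\in\N}\subset C^\infty(\R^d)\cap W^{K,p}(\R^d)$ with $v_n\to \widetilde{u}$ in $W^{K,p}(\R^d)$. Setting $u_n := v_n|_{\overline{\Omega}}$, each $u_n$ belongs to $C^\infty(\overline{\Omega})$, and in particular $u_n\in C^\infty(\overline{\Omega})\cap W^{K,p}(\Omega)$ once I verify the norm is finite.

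The key observation is that the restriction map from $W^{K,p}(\R^d)$ to $W^{K,p}(\Omega)$ is continuous with norm at most $1$. Indeed, for any $v\in W^{K,p}(\R^d)$,
\begin{equation*}
    \|v|_\Omega\|_{L^p(\Omega)} \leq \|v\|_{L^p(\R^d)},
    \qquad
    [v|_\Omega]^p_{W^{K,p}(\Omega)} = \int_{\Omega}\int_{\Omega}|v(x)-v(y)|^p K(x-y)\,\de x \de y \leq [v]^p_{W^{K,p}(\R^d)},
\end{equation*}
since $\Omega\times\Omega \subset \R^d\times\R^d$ and the integrand is non-negative. Applying this to the difference $v_n - \widetilde{u}$ gives
\begin{equation*}
    \|u_n - u\|_{W^{K,p}(\Omega)} = \|(v_n - \widetilde{u})|_\Omega\|_{W^{K,p}(\Omega)} \leq \|v_n - \widetilde{u}\|_{W^{K,p}(\R^d)} \xrightarrow{n\to\infty} 0,
\end{equation*}
which proves the density.

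There is no real obstacle here: all the work has already been done in \cref{thm:extension} and \cref{res:BVKDense}. The only point to double-check is the trivial continuity of the restriction operator, which follows immediately from the fact that the domain of integration shrinks. Note also that \eqref{H:Nts} implies \eqref{H:Far}, so \cref{res:BVKDense} applies in this setting.
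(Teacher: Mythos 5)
Your argument is exactly the one the paper intends: the corollary is stated as an immediate combination of \cref{thm:extension} and \cref{res:BVKDense}, namely extend, approximate globally by smooth functions, and restrict, using that the restriction operator has norm at most $1$. The proof is correct and complete (the aside about \eqref{H:Nts} implying \eqref{H:Far} is harmless but not needed, since \cref{res:BVKDense} holds for any kernel).
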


\section{Sobolev and Poincaré inequalities}\label{sec:SobolevPoincare}

In this section, we present a compactness result in $W^{K,p}(\Omega)$ together with some estimates in the spirit of Poincaré-Wirtinger and Sobolev inequalities on bounded, open, regular subsets of $\R^d$. At that end, we recall some results that are already present in the literature starting with a result of local compactness contained in \cite{BesSte25-MR4845992}. In the following, we will let $C$ denote a positive constant that may change from line to line.

\begin{theorem}\label{thm:compactness}  Let $K$ be a kernel satisfying \eqref{H:Nint} and \eqref{H:Far} and let $\{u_k\}_k\subset W^{K,p}(\R^d)$ with $p\in[1,+\infty)$ be a sequence such that
\begin{equation*}
    \sup_k\|u_k\|_{W^{k,p}(\R^d)}<+\infty.
\end{equation*}
Then there exist a subsequence $\{k_j\}_j$ and $u\in W^{K,p}(\R^d)$ such that 
\begin{equation*}
    u_{k_j}\to u \,\,\text{ in } L^p_{\text{loc}}(\R^d)\,\, \text{ as } j\to+\infty.
\end{equation*}
\end{theorem}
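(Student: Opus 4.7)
The plan is to verify compactness via a uniform approximation of $\{u_k\}$ by smooth functions that are locally precompact, followed by a diagonal extraction. The obstacle is that the seminorm $[\,\cdot\,]_{W^{K,p}(\Rd)}$ controls translation differences only in the weighted integral sense \eqref{eq:seminormWKpalt}, without any pointwise control in $h$, so Fr\'echet--Kolmogorov cannot be applied directly. My strategy is to design a mollifier built from $K$ itself, so that the seminorm bound can be pulled out of the approximation estimate uniformly in $k$.

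First I would observe that \eqref{H:Nint} and \eqref{H:Far} together force $\int_{B_\varepsilon}K(h)\,\de h=+\infty$ for every $\varepsilon>0$. For $M>0$, set
\begin{equation*}
    K_{\varepsilon,M}(h):=\min\{K(h),M\}\,\mathbf{1}_{B_\varepsilon}(h),\qquad I_M(\varepsilon):=\int_{\Rd}K_{\varepsilon,M}(h)\,\de h,
\end{equation*}
so that $K_{\varepsilon,M}\in L^1(\Rd)$ and $I_M(\varepsilon)\nearrow+\infty$ as $M\to+\infty$ by monotone convergence. Let $\bar\rho_{\varepsilon,M}:=K_{\varepsilon,M}/I_M(\varepsilon)$ be the resulting probability density supported in $B_\varepsilon$, and define $v_k^{\varepsilon,M}:=u_k*\bar\rho_{\varepsilon,M}$. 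Jensen's inequality applied to the probability measure $\bar\rho_{\varepsilon,M}(h)\,\de h$, combined with the pointwise bound $\bar\rho_{\varepsilon,M}\leq K/I_M(\varepsilon)$ and \eqref{eq:seminormWKpalt}, then gives
\begin{equation*}
    \|u_k-v_k^{\varepsilon,M}\|_{L^p(\Rd)}^p
    \leq\int_{\Rd}\bar\rho_{\varepsilon,M}(h)\,\|u_k(\cdot-h)-u_k\|_{L^p(\Rd)}^p\,\de h
    \leq\frac{[u_k]_{W^{K,p}(\Rd)}^p}{I_M(\varepsilon)},
\end{equation*}
which is uniformly small in $k$ once $M$ is chosen large. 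To upgrade $\bar\rho_{\varepsilon,M}$ to a smooth mollifier, I would convolve with a standard smooth kernel $\eta_\sigma$, obtaining $\rho^{(\varepsilon,M,\sigma)}:=\bar\rho_{\varepsilon,M}*\eta_\sigma\in C^{\infty}_c(\Rd)$, for which
\begin{equation*}
    \|v_k^{\varepsilon,M}-u_k*\rho^{(\varepsilon,M,\sigma)}\|_{L^p(\Rd)}
    \leq\|u_k\|_{L^p(\Rd)}\,\|\bar\rho_{\varepsilon,M}-\bar\rho_{\varepsilon,M}*\eta_\sigma\|_{L^1(\Rd)}
\end{equation*}
vanishes as $\sigma\to 0$, uniformly in $k$ by the uniform $L^p$-bound on $\{u_k\}$.

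For each fixed triple $(\varepsilon,M,\sigma)$, Young's inequality provides a uniform bound on $w_k^{(\varepsilon,M,\sigma)}:=u_k*\rho^{(\varepsilon,M,\sigma)}$ and its gradient in $L^{\infty}(\Rd)$, so this family is locally precompact in $L^p$ by Arzel\`a--Ascoli. I would then choose a sequence of parameters $(\varepsilon_n,M_n,\sigma_n)$ with $\sup_k\|u_k-w_k^{(n)}\|_{L^p(\Rd)}\leq 1/n$, extract a diagonal subsequence $\{u_{k_j}\}$ that is Cauchy in $L^p(B_R)$ for every $R>0$, and let $u$ be its $L^p_{\mathrm{loc}}$ limit. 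Passing to a further a.e.-convergent subsequence and applying Fatou's lemma separately to $\|u\|_{L^p(\Rd)}$ and to the double integral in \eqref{eq:seminormWKp} (with $\Om=\Rd$) would yield $u\in W^{K,p}(\Rd)$. The main obstacle is the construction in the second paragraph: because $K$ is non-integrable on each $B_\varepsilon$ it cannot serve directly as a mollifier, and the truncation must be arranged so that $\bar\rho_{\varepsilon,M}$ is a genuine probability density while $I_M(\varepsilon)$ can be driven to $+\infty$ independently of $k$; once this kernel-adapted mollifier is in hand, the smoothing, Young bounds, Arzel\`a--Ascoli, and diagonal extraction are standard.
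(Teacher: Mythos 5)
The paper does not actually prove this statement: \cref{thm:compactness} is recalled from \cite{BesSte25-MR4845992} and used as a black box, so there is no internal proof to compare against. Your argument is a correct, self-contained proof, and it rests on exactly the right mechanism: \eqref{H:Nint} together with \eqref{H:Far} forces $\int_{B_\eps}K\,\de h=+\infty$ for every $\eps>0$, so the normalized truncations $\bar\rho_{\eps,M}=K_{\eps,M}/I_M(\eps)$ are approximate identities supported in $B_\eps$ whose approximation error satisfies $\|u_k-u_k\convolution\bar\rho_{\eps,M}\|^p_{L^p(\R^d)}\le [u_k]^p_{W^{K,p}(\R^d)}/I_M(\eps)$ by Jensen and the pointwise bound $K_{\eps,M}\le K$; this converts the purely averaged translation control of \eqref{eq:seminormWKpalt} into a uniform $L^p$-approximation by a locally precompact family, which is the standard Riesz--Fr\'echet--Kolmogorov route for such kernels. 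Two minor observations: the smoothing with $\eta_\sigma$ and Arzel\`a--Ascoli can be bypassed, since once $\sup_k\|u_k-u_k\convolution\bar\rho_{\eps,M}\|_{L^p(\R^d)}\le\delta$ one has $\|u_k(\cdot+h)-u_k\|_{L^p(\R^d)}\le 2\delta+\|u_k\|_{L^p(\R^d)}\,\|\bar\rho_{\eps,M}(\cdot+h)-\bar\rho_{\eps,M}\|_{L^1(\R^d)}$, which verifies the Fr\'echet--Kolmogorov criterion directly by continuity of translations in $L^1$; and in the last step the a.e.-convergent subsequence must be extracted on all of $\R^d$ (a further diagonal over balls) before applying Fatou to the double integral defining $[u]_{W^{K,p}(\R^d)}$, as you indicate. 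Neither point is a gap.
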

Having at disposal our extension theorem, we prove the following.

\begin{corollary}[][cor:compactembedding]
     Let $K$ be a kernel satisfying \eqref{H:Nint}, \eqref{H:Dec}, \eqref{H:Dou}, and \eqref{H:Nts}, $\Omega \subset \R^d$ a bounded, open set with Lipschitz boundary, and $p\in[1,+\infty)$. Then $W^{K,p}(\Omega)$ is compactly embedded in $L^p(\Omega)$; that is, if $\{u_k\}_k\subset W^{K,p}(\Omega)$ is such that
    \begin{equation*}
        \sup_k\|u_k\|_{W^{K,p}(\Omega)}<+\infty,
    \end{equation*}
    there exist a subsequence $\{k_j\}_j$ and $u\in W^{K,p}(\Omega)$ such that 
\begin{equation*}
    u_{k_j}\to u \,\,\text{ in } L^p(\Omega)\,\, \text{ as } j\to+\infty.
\end{equation*}
\end{corollary}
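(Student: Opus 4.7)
The plan is to deduce this from the extension theorem (Theorem~\ref{thm:extension}) together with the local compactness result Theorem~\ref{thm:compactness}, which covers the global case on $\R^d$.

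First I would take a sequence $\{u_k\}_k \subset W^{K,p}(\Omega)$ with $\sup_k \|u_k\|_{W^{K,p}(\Omega)} < +\infty$ and apply the extension operator provided by Theorem~\ref{thm:extension}: this yields extensions $\widetilde{u}_k \in W^{K,p}(\R^d)$ satisfying
\begin{equation*}
    \|\widetilde{u}_k\|_{W^{K,p}(\R^d)} \leq C\,\|u_k\|_{W^{K,p}(\Omega)},
\end{equation*}
with $C$ depending only on $c_0, C_D, K, \Omega, |\cdot|_*$ and, crucially, independent of $k$. Hence $\{\widetilde{u}_k\}_k$ is bounded in $W^{K,p}(\R^d)$, and the hypotheses \eqref{H:Nint} and \eqref{H:Nts} (the latter being stronger than \eqref{H:Far}) are in force, so Theorem~\ref{thm:compactness} applies.

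Next I would invoke Theorem~\ref{thm:compactness} to extract a subsequence $\{\widetilde{u}_{k_j}\}_j$ and a limit $\widetilde{u} \in W^{K,p}(\R^d)$ such that $\widetilde{u}_{k_j} \to \widetilde{u}$ in $L^p_{\loc}(\R^d)$. Since $\Omega$ is bounded, its closure $\overline{\Omega}$ is a compact subset of $\R^d$, and therefore the local convergence implies $\widetilde{u}_{k_j} \to \widetilde{u}$ in $L^p(\Omega)$. Setting $u := \widetilde{u}|_\Omega$, the restrictions $\widetilde{u}_{k_j}|_\Omega = u_{k_j}$ converge to $u$ in $L^p(\Omega)$. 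Finally, $u$ belongs to $W^{K,p}(\Omega)$ because $\widetilde{u} \in W^{K,p}(\R^d)$ gives $u \in L^p(\Omega)$ and $[u]_{W^{K,p}(\Omega)} \leq [\widetilde{u}]_{W^{K,p}(\R^d)} < +\infty$, the latter by monotonicity of the double integral when restricting from $\R^d \times \R^d$ to $\Omega \times \Omega$.

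There is essentially no obstacle here: the real work has already been done in proving Theorem~\ref{thm:extension}, and the proof is a straightforward chain (extension, then global compactness, then restriction). The only mild subtlety to check is that $L^p_{\loc}(\R^d)$-convergence indeed gives $L^p(\Omega)$-convergence on the bounded set $\Omega$, which follows by testing against any open bounded neighborhood of $\overline{\Omega}$.
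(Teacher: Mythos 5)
Your proof is correct and follows exactly the paper's argument: extend via Theorem~\ref{thm:extension} to get a bounded sequence in $W^{K,p}(\R^d)$, apply Theorem~\ref{thm:compactness} to extract an $L^p_{\loc}$-convergent subsequence, and restrict to the bounded set $\Omega$. The extra details you supply (that the limit lies in $W^{K,p}(\Omega)$ by monotonicity of the seminorm under restriction) are fine and only make the argument more explicit.
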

\begin{proof}
 Consider a sequence $\{u_k\}_k$ which is bounded in $W^{K,p}(\Omega)$. First, we apply Theorem \ref{thm:extension} to obtain a sequence $\{\widetilde{u}_k\}_k\subset W^{K,p}(\R^d)$ such that
    \begin{equation*}
    \sup_k\|\widetilde{u}_k\|_{W^{K,p}(\R^d)}<+\infty,
\end{equation*}
then, by Theorem \ref{thm:compactness} we find a subsequence $\{k_j\}_j$ and $u\in W^{K,p}(\R^d)$ such that 
\begin{equation*}
    \widetilde{u}_{k_j}\to u \,\,\text{ in } L^p_{\text{loc}}(\R^d)\,\, \text{ as } j\to+\infty.
\end{equation*} 
This, in turn, implies $u_{k_j}\to u$ in $L^p(\Omega)$, which concludes the proof.
\end{proof}

We illustrate a Sobolev-type inequality that is presented in \cite{CesNov18-MR3732175} and we immediately deduce an analogous result on bounded, regular domains through the extension result. 

\begin{proposition}[][prop:sobolev]
Assume that there exist $q\in[1,+\infty)$ and a positive constant $C$ such that (recalling the notation in \cref{sec:notation}),
\begin{equation}\label{isoperimetricassumption}
    {\NonlocalPerimeterSymbol}_{\rearr{\K}}(\Bm) \geq \frac{m^{\frac{1}{q}}}{C}    \qquad
\text{ for every } m>0.
\end{equation}
Then for every $u\in W^{K,1}(\R^d)$ it holds that
\begin{equation*}
    \|u\|_{L^q(\R^d)} \leq C [u]_{W^{K,1}(\R^d)}.
\end{equation*}
\end{proposition}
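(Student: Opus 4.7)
The plan is to combine a layer-cake decomposition of the seminorm with a rearrangement inequality and the hypothesised isoperimetric estimate. It suffices to treat the case $u\geq 0$, since $\|u\|_{L^q(\R^d)}=\||u|\|_{L^q(\R^d)}$ while the triangle inequality gives $[|u|]_{W^{K,1}(\R^d)}\leq [u]_{W^{K,1}(\R^d)}$. Setting $E_t:=\{u>t\}$ for $t>0$, for nonnegative $u$ one has the pointwise identity $|u(x)-u(y)|=\int_0^\infty|\mathbf{1}_{E_t}(x)-\mathbf{1}_{E_t}(y)|\,\de t$, whence Tonelli's theorem yields
\[
[u]_{W^{K,1}(\R^d)}=\int_0^\infty\mathcal{P}_K(E_t)\,\de t,
\]
where, consistently with the paper's convention, $\mathcal{P}_K(E):=\iint_{\R^d\times\R^d}|\mathbf{1}_E(x)-\mathbf{1}_E(y)|\,K(x-y)\,\de x\de y$ denotes the non-local perimeter of $E$.

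Next, I would invoke a Pólya–Szegő-type rearrangement inequality for non-local perimeters: for every measurable $E\subset\R^d$ of finite measure, letting $E^*=B^{|E|}$ denote its symmetric decreasing rearrangement,
\[
\mathcal{P}_K(E)\geq \mathcal{P}_{K^*}\!\bigl(E^*\bigr)=\mathcal{P}_{K^*}\!\bigl(B^{|E|}\bigr).
\]
By the standing hypothesis \eqref{isoperimetricassumption} applied with $m=|E_t|$, this gives $\mathcal{P}_K(E_t)\geq |E_t|^{1/q}/C$. Integrating in $t$ and using Minkowski's integral inequality applied to $u=\int_0^\infty\mathbf{1}_{E_t}\,\de t$, one obtains
\[
\|u\|_{L^q(\R^d)}\;\leq\;\int_0^\infty\|\mathbf{1}_{E_t}\|_{L^q(\R^d)}\,\de t\;=\;\int_0^\infty|E_t|^{1/q}\,\de t\;\leq\;C\int_0^\infty\mathcal{P}_K(E_t)\,\de t\;=\;C\,[u]_{W^{K,1}(\R^d)},
\]
which is the claimed inequality.

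The main obstacle I expect is the justification of the rearrangement inequality $\mathcal{P}_K(E)\geq\mathcal{P}_{K^*}(E^*)$ under the weak integrability enforced by \eqref{H:Nint} and \eqref{H:Far}, since the natural expansion $|\mathbf{1}_E(x)-\mathbf{1}_E(y)|=\mathbf{1}_E(x)+\mathbf{1}_E(y)-2\mathbf{1}_E(x)\mathbf{1}_E(y)$ leads to a term $2|E|\|K\|_{L^1}$ which is infinite. A clean way around this is to use the layer-cake representation $K=\int_0^\infty\mathbf{1}_{\{K>s\}}\,\de s$ and Tonelli's theorem to reduce matters to the inequality
\[
\iint|\mathbf{1}_E(x)-\mathbf{1}_E(y)|\,\mathbf{1}_{\{K>s\}}(x-y)\,\de x\de y\;\geq\;\iint|\mathbf{1}_{E^*}(x)-\mathbf{1}_{E^*}(y)|\,\mathbf{1}_{\{K>s\}}^{*}(x-y)\,\de x\de y
\]
for each super-level set $\{K>s\}$, which is a direct consequence of the Riesz rearrangement inequality. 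Integration in $s$ then produces the rearrangement inequality for $\mathcal{P}_K$ with $K$ replaced by $K^*$ and avoids any appeal to the $L^1$ norm of $K$. Finally, one may want to first prove the inequality for $u\in C_c^\infty\cap W^{K,1}$, which is dense by \cref{res:BVKDense}, and conclude by Fatou's lemma.
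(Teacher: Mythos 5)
Your argument is correct. Note that the paper itself does not prove this proposition: it is quoted from \cite{CesNov18-MR3732175}, so there is no internal proof to compare against; your write-up reconstructs the standard argument (coarea decomposition of the seminorm into level sets, Riesz rearrangement applied to each superlevel set of the kernel so as to obtain $\mathcal{P}_K(E)\geq\mathcal{P}_{K^*}(E^*)$ without ever invoking $\|K\|_{L^1(\R^d)}$, the hypothesis \eqref{isoperimetricassumption}, and Minkowski's integral inequality for the layer-cake representation of $u$). Two small points. First, the paper's convention is $\mathcal{P}_K(E)=\tfrac12\iint|\chi_E(x)-\chi_E(y)|K(x-y)\,\de x\,\de y$, so your coarea identity reads $[u]_{W^{K,1}(\R^d)}=2\int_0^{\infty}\mathcal{P}_K(E_t)\,\de t$; this only improves the final constant to $C/2$ and is harmless. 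Second, when you reduce the levelwise inequality to Riesz via $|\chi_E(x)-\chi_E(y)|=\chi_E(x)+\chi_E(y)-2\chi_E(x)\chi_E(y)$, you need $|\{K>s\}|<+\infty$ for $s>0$ in order to cancel the terms $2|E|\,|\{K>s\}|$; this is guaranteed by \eqref{H:Far}, and if \eqref{H:Far} fails the statement is vacuous because $W^{K,1}(\R^d)=\{0\}$ by \cref{res:NonFarBVKtrivial} (moreover \eqref{isoperimetricassumption} already presupposes that $K^*$ is well defined). Finally, the closing appeal to density and Fatou is unnecessary: every step is valid directly for an arbitrary $u\in W^{K,1}(\R^d)$, since $u\in L^1(\R^d)$ already gives $|E_t|<+\infty$ for every $t>0$.
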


\begin{corollary}[][cor:continuousembedding]
Let $K$ be a kernel satisfying \eqref{H:Dec}, \eqref{H:Dou}, and \NtsOne{}, $\Omega \subset \R^d$ a bounded, open set with Lipschitz boundary, and assume that \eqref{isoperimetricassumption} holds for some $q\in[1,+\infty)$. Then $W^{K,1}(\Omega)$ is continuously embedded in $L^q(\Omega)$; that is, there exists a positive constant $C$ such that
\begin{equation*}
    \|u\|_{L^q(\Omega)}\leq C\|u\|_{W^{K,1}(\Omega)}
\end{equation*}
for every $u\in W^{K,1}(\Omega)$.
\end{corollary}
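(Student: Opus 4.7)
The plan is to obtain the desired inequality by combining the extension result in \cref{thm:extension} with the Sobolev-type estimate on $\R^d$ stated in \cref{prop:sobolev}. Since \NtsOne{} implies \eqref{H:Far} (and \eqref{H:Dec}, \eqref{H:Dou} are assumed), the hypotheses of \cref{thm:extension} are in force for $p=1$, while \eqref{isoperimetricassumption} is exactly what \cref{prop:sobolev} requires.

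Concretely, I would argue as follows. Fix $u\in W^{K,1}(\Omega)$. By \cref{thm:extension} with $p=1$, there exists $\widetilde{u}\in W^{K,1}(\R^d)$ coinciding with $u$ on $\Omega$ and satisfying
\begin{equation*}
\|\widetilde{u}\|_{W^{K,1}(\R^d)}\leq C_1\,\|u\|_{W^{K,1}(\Omega)},
\end{equation*}
where $C_1=C_1(c_0,C_D,K,\Omega,|\cdot|_*)$. Applying \cref{prop:sobolev} to $\widetilde{u}$ (note that $[\widetilde{u}]_{W^{K,1}(\R^d)}\leq \|\widetilde{u}\|_{W^{K,1}(\R^d)}$) yields
\begin{equation*}
\|\widetilde{u}\|_{L^q(\R^d)}\leq C_2\,[\widetilde{u}]_{W^{K,1}(\R^d)}\leq C_2\,\|\widetilde{u}\|_{W^{K,1}(\R^d)}.
\end{equation*}
Finally, since $\widetilde{u}=u$ on $\Omega$, one has $\|u\|_{L^q(\Omega)}\leq \|\widetilde{u}\|_{L^q(\R^d)}$, and chaining the three inequalities gives
\begin{equation*}
\|u\|_{L^q(\Omega)}\leq C\,\|u\|_{W^{K,1}(\Omega)}
\end{equation*}
with $C=C_1 C_2$ independent of $u$, which is the claim.

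There is essentially no obstacle: the whole point of deriving \cref{thm:extension} was precisely to allow such transfers of global inequalities on $\R^d$ to the bounded Lipschitz domain $\Omega$. The only things worth double-checking are that the assumption \NtsOne{} is indeed strong enough to invoke the extension theorem (which needs \eqref{H:Nts} with $p=1$, matching \NtsOne{}), and that \eqref{isoperimetricassumption} is preserved — but this latter condition is an intrinsic property of the kernel $K$ and does not involve the domain, so it carries over unchanged from $\R^d$ to $\Omega$ via the extension.
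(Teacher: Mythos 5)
Your argument is correct and is essentially identical to the paper's proof: extend $u$ to $\widetilde{u}\in W^{K,1}(\R^d)$ via \cref{thm:extension}, apply \cref{prop:sobolev} to $\widetilde{u}$, and restrict back to $\Omega$. The bookkeeping of hypotheses (that \NtsOne{} gives \eqref{H:Nts} with $p=1$ and implies \eqref{H:Far}) is also handled as in the paper.
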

\begin{proof}
Given $u\in W^{K,1}(\Omega)$, by Theorem \ref{thm:extension} there exists an extension $\widetilde{u}\in W^{K,1}(\R^d)$ such that
\begin{equation*}
       \|\widetilde{u}\|_{W^{K,1}(\R^d)} \leq C  \|u\|_{W^{K,1}(\Omega)},
\end{equation*}
then, by Proposition \ref{prop:sobolev} we obtain
\begin{equation*}
    \|\widetilde{u}\|_{L^q(\R^d)} \leq C\|u\|_{W^{K,1}(\Omega)}.
\end{equation*}
Since $\widetilde{u}$ coincides with $u$ on $\Omega$, the thesis follows.
\end{proof}

In view of the study of non-local isoperimetric problems related to the kernel $K$, a useful tool is a relative isoperimetric inequality. Such estimate is a particular case of a Poincaré-Wirtinger inequality, that in turn, can be obtained through the embedding results that we already proved. We premise a useful remark on constant functions.

\begin{remark}\label{rmk:constantfunction}
    We observe that, in general, $[u]_{W^{K,p}(\Omega)}=0$ does not imply that $u$ is constant a.e. on $\Omega$. For instance, if $\Omega$ is made up of two disjoint balls whose distance is larger than the diameter of the support of $K$, then $[u]_{W^{K,p}(\Omega)}=0$ for every locally constant function $u$. However, if $\Omega$ is connected and $K$ satisfies \eqref{H:Nint}, \eqref{H:Far}, and \eqref{H:Dec}, such property holds true. To see this, it suffices to show that $[u]_{W^{K,p}(\Omega)}=0$ implies $u$ is locally constant. Let $x_0\in\Omega$ and let $r>0$ such that $B_{2r}(x_0)\subset\subset \Omega$, then
\begin{equation*}
    0 = [u]_{W^{K,p}(\Omega)} \geq \int_{B_r(0)} K(h)\Bigl\{\int_{B_r(x_0)}  |u(x+h)-u(x)|^p\,\de\x\Bigr\}\,\de h.
\end{equation*}
As a consequence of \eqref{H:Nint}, \eqref{H:Far}, and \eqref{H:Dec}, 
we deduce that \eqref{H:Inf} holds. This fact, combined with the above inequality, implies (if $\r>0$ is sufficiently small)
\begin{equation*}
    0 = \int_{B_r(0)}\Bigl\{\int_{B_r(x_0)} |u(x+h)-u(x)|^p\,\de\x\Bigr\}\,\de h,
\end{equation*}
which in turn yields $u(x+h)=u(x)$ for a.e. $h\in B_r(0)$ and $x\in B_r(x_0)$; i.e., $u(x)=u(y)$ for a.e. $x,y\in B_{2r}(x_0)$. Therefore, $u$ is a.e. equal to a constant in a neighborhood of $x_0$.
\end{remark} 
In order to simplify the notations in the following, we set 
\begin{equation*}
    u_{\Omega}:=\frac{1}{|\Omega|}\int_{\Omega}u\,\de\x,
\end{equation*}
for every $\u\in\LoneOm$.

\begin{proposition}[][prop:poincaré] 
Let $\Omega \subset \R^d$ a bounded, connected, open set with Lipschitz boundary, and $p\in[1,+\infty)$. Let $K$ be a kernel satisfying \eqref{H:Nint}, \eqref{H:Dec}, \eqref{H:Dou}, and \eqref{H:Nts}. Then there exists a positive constant $C$ such that 
\begin{equation}\label{PW}
    \Bigl(\int_{\Omega} |u-u_\Omega|^p\,\de\x\Bigr)^{\frac{1}{p}} \leq C [u]_{W^{K,p}(\Omega)}
\end{equation}
    for every $u\in W^{K,p}(\Omega)$. Moreover, if \eqref{isoperimetricassumption} holds for some $q\in[1,+\infty)$, then there exists a positive constant $C$ such that 
\begin{equation}\label{PWimproved}
    \Bigl(\int_{\Omega} |u-u_\Omega|^q\,\de\x\Bigr)^{\frac{1}{q}} \leq C [u]_{W^{K,1}(\Omega)}
\end{equation}
    for every $u\in W^{K,1}(\Omega)$.
\end{proposition}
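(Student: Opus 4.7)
The plan is to establish \eqref{PW} by a standard compactness-contradiction argument built on the tools already developed in the paper, and then to deduce \eqref{PWimproved} by combining \eqref{PW} with the Sobolev embedding from \cref{cor:continuousembedding}.

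For \eqref{PW}, I would argue by contradiction. Suppose the inequality fails; then there exists a sequence $\{u_k\}_k\subset W^{K,p}(\Omega)$ such that $(u_k)_\Omega = 0$, $\|u_k\|_{L^p(\Omega)} = 1$, and $[u_k]_{W^{K,p}(\Omega)} \to 0$ (the mean-zero normalization can be achieved by replacing $u$ with $u - u_\Omega$ and dividing by the $L^p$-norm). In particular $\{u_k\}_k$ is bounded in $W^{K,p}(\Omega)$, so \cref{cor:compactembedding} yields a (non-relabeled) subsequence converging in $L^p(\Omega)$ to some $u\in W^{K,p}(\Omega)$ with $\|u\|_{L^p(\Omega)} = 1$ and $u_\Omega = 0$. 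The key point is then to pass to the limit in the seminorm: by extracting a further subsequence that converges a.e., Fatou's lemma applied to the double integral defining $[\,\cdot\,]^p_{W^{K,p}(\Omega)}$ gives
\begin{equation*}
    [u]^p_{W^{K,p}(\Omega)} \leq \liminf_{k\to+\infty} [u_k]^p_{W^{K,p}(\Omega)} = 0.
\end{equation*}
Since $\Omega$ is connected and $K$ satisfies \eqref{H:Nint}, \eqref{H:Far} and \eqref{H:Dec}, \cref{rmk:constantfunction} forces $u$ to be a.e. constant on $\Omega$. The condition $u_\Omega = 0$ then yields $u\equiv 0$, contradicting $\|u\|_{L^p(\Omega)}=1$.

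For \eqref{PWimproved}, once \eqref{PW} is available with $p=1$, I would apply \cref{cor:continuousembedding} to $v := u - u_\Omega\in W^{K,1}(\Omega)$, obtaining
\begin{equation*}
    \Bigl(\int_\Omega |u-u_\Omega|^q\,\de\x\Bigr)^{1/q}
    \leq C\bigl(\|u-u_\Omega\|_{L^1(\Omega)} + [u]_{W^{K,1}(\Omega)}\bigr),
\end{equation*}
where I used that $[u-u_\Omega]_{W^{K,1}(\Omega)} = [u]_{W^{K,1}(\Omega)}$. Invoking \eqref{PW} with $p=1$ to estimate $\|u-u_\Omega\|_{L^1(\Omega)}$ by a constant times $[u]_{W^{K,1}(\Omega)}$ concludes the argument.

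The main obstacle is the verification that $[u]_{W^{K,p}(\Omega)}=0$ for the limit $u$, which requires the lower semicontinuity of the seminorm along the $L^p$-convergent subsequence. This is handled cleanly via Fatou after passing to an a.e. convergent subsequence, so no new ideas beyond the compact embedding and the connectedness observation in \cref{rmk:constantfunction} are needed. The rest of the proof consists of routine manipulations.
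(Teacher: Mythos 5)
Your proof of \eqref{PW} is correct and follows essentially the same route as the paper: the contradiction--compactness argument via \cref{cor:compactembedding}, lower semicontinuity of the seminorm by Fatou along an a.e.\ convergent subsequence, and \cref{rmk:constantfunction} together with connectedness to conclude. For \eqref{PWimproved} you argue directly, chaining \cref{cor:continuousembedding} applied to $u-u_\Omega$ with \eqref{PW} for $p=1$, whereas the paper runs a second contradiction argument; the two proofs use exactly the same ingredients, and your direct version is, if anything, slightly cleaner.
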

\begin{proof}
The proof of \eqref{PW} is obtained by the usual contradiction-compactness argument. If \eqref{PW} fails, there exists a sequence $\{u_k\}_k \subset W^{K,p}(\Omega)$ such that for all $k\in \mathbb{N}$ it holds $(u_k)_\Omega=0$, $\|u_k\|_{L^p(\Omega)}=1$, and moreover
    \begin{equation*}
        [u_k]_{W^{K,p}(\Omega)}\to 0 \quad  \text{ as } k\to+\infty.
    \end{equation*}
Applying Corollary \ref{cor:compactembedding} we deduce there exists $u\in W^{K,p}(\Omega)$ such that, up to a not relabelled subsequence, $u_k\to u$ in $L^p(\Omega)$, and $u_k\to u$ a.e. in $\Omega$. We claim that $u$ is constant; indeed, by Fatou's Lemma,
\begin{equation*}
0=\liminf_{k\to+\infty}\int_{\Omega}\int_{\Omega} |u_k(x)-u_k(y)|^p K(x-y)\,\de\x\de\y \geq \int_{\Omega}\int_{\Omega} |u(x)-u(y)|^p K(x-y)\,\de\x\de\y
\end{equation*}
so that Remark \ref{rmk:constantfunction} and the connectedness of $\Omega$ imply the claim. Since $u_\Omega=0$, we infer $u=0$, which contradicts $\|u\|_{L^p(\Omega)}=1.$    
    
The proof of \eqref{PWimproved} is similar. Arguing again by contradiction, there exists a sequence $\{u_k\}_k \subset W^{K,1}(\Omega)$ such that for all $k\in \mathbb{N}$ it holds $(u_k)_\Omega=0$ and $\|u_k\|_{L^q(\Omega)}=1$, and moreover
    \begin{equation*}
        [u_k]_{W^{K,1}(\Omega)}\to 0 \quad  \text{ as } k\to+\infty.
    \end{equation*}
    Note that, by \eqref{PW}, this implies $u_k\to 0$ in $L^1(\Omega)$, which in turn yields $u_k\to 0$ in $W^{K,1}(\Omega)$ as $k\to+\infty$. Applying Corollary \ref{cor:continuousembedding}, we infer that $u_k\to 0$ in $L^q(\Omega)$ as $k\to+\infty$, which contradicts the fact that $\|u_k\|_{L^q(\Omega)}=1$ for all $k\in\mathbb{N}$, concluding the proof.
\end{proof}
\begin{remark}
    If $\K$ satisfies \eqref{H:Pos} and \eqref{H:Dec}, the proof of \eqref{PW} follows by a straightforward computation without requiring any additional assumption either on $K$ or on $\partial \Omega$. Indeed, we put $d=\text{diam }\Omega$ and obtain
    \begin{align*}
        \int_{\Omega} |u-u_\Omega|^p\,\de\x & \leq \frac{1}{|\Omega|}\int_\Omega \int_\Omega |u(x)-u(y)|^p\,\de\x\de\y \\
        & \leq \frac{1}{|\Omega|\inf\{K(z): z\in B_{d}(0)\}}\int_\Omega \int_\Omega |u(x)-u(y)|^pK(x-y)\,\de\x\de\y,
    \end{align*}
    which is the claimed inequality since, by the assumptions, it holds $\inf\{K(z): z\in B_{d}(0)\}>0$.  
\end{remark}

\section{Remarks on the non-local isoperimetric problem}\label{sec:final}
Let $\K$ be a kernel and let $\Om,\E\subset\Rd$ be measurable sets.
We define the \emph{$\K$-perimeter} of $\E$ relative to $\Om$ as
\begin{equation*}
  \PKresOm[\E]\coloneqq
      \tonde*{
        \unmezzo\integral{
          \Om\times\Om
        }
        +
        \integral{\Om\times\comp{\Om}}
      }
      \abs*{\CharFun{\E}[\x]-\CharFun{\E}[\y]}\K[\x-\y]
      \integralde\x\de\y,   
\end{equation*} 
the \emph{$\K$-perimeter} of $\E$
by $\TARGETNonlocalPerimeterSymbol[\PK[\E]]\coloneqq\PKresRd[\E]=\unmezzo\seminormWKone[\CharFun{\E}]$
and finally we put
\begin{equation*}
    \TARGETNonlocalFunctionalSymbol[\VK[\E]]
    \coloneqq\integral{\E\times\E}\K[\x-\y]\integralde\x\de\y.
\end{equation*}
For every $\m>0$ we consider the following variational problems,
 \begin{equation}\tag{$\newtarget{NonlocalIsopPB}{{\NonlocalIsoPBSymbol}_{\K,\m}}$}
    \pK[\m]\coloneqq\inf_{\E\subset\Rd,\lebd{\E}=\m}\PK[\E],
 \end{equation}

 \begin{equation}\tag{$\newtarget{NonlocalMaxPB}{{\NonlocalMaxPBSymbol}_{\K,\m}}$}
    \sup_{\E\subset\Rd,\lebd{\E}=\m}\VK[\E],
 \end{equation}
 where the function
 $\pK$ is usually referred to as the \emph{isoperimetric profile} or the \emph{isovolumetric function}. We recall that, by \cref{res:KsymSubstK}, it would not be restrictive to replace the kernel $K$ by its symmetrized kernel $K_{\text{sym}}(x)=(K(x)+K(-x))/2$. Therefore, we will often assume that \eqref{H:Sym} is satisfied.

 \begin{remark}[][res:equiv_PB_PK_VK]
    We observe that if $\K\in\LoneRd$ and \eqref{H:Sym} holds, then
    \begin{equation*}\label{eq:linkPKandVK}
    \PK[\E]\coloneqq\normLoneRd{\K}\lebd{\E}-\VK[\E].
    \end{equation*} 
    Therefore, under this assumptions,
    $\NonlocalIsopPBKernelVolume{\K*}{\m}$
    is equivalent to
    $\NonlocalMaxPBKernelVolume{\K*}{\m}$
    for every $\m>0$.
 \end{remark}

 \subsection{Non-local perimeter of balls}

 In this section, given a kernel $\K$, we are interested in studying the function
 \begin{equation*}
    \intervallo{()}{0}{+\infty}\ni\r\mapsto\PK[\Br],
 \end{equation*}
 which in general is a bounded from above by the function $\intervallo{()}{0}{+\infty}\ni\r\mapsto\pK[\lebd{\Br}]$.
 We start introducing the non-local curvature.

 \begin{definition}[Non-local curvature][def:NonlocalCurvature]
     Let $\K$ be a kernel satisfying \eqref{H:Sym} and \eqref{H:Far} and let $E\subset\Rd$ be a measurable set.
     We define
     \begin{equation}\label{eq:NonlocalCurvature}
         \TARGETNonlocalCurvatureSymbol[\HKbdryE[\x]]
         \coloneqq
         \limsup_{\eps\to0^+}    
         \integral{\Rd\setminus\BallRadiusCenter{\eps}{\x}}
         \tonde*{\CharFun{\comp{\E}}[\y]-\CharFun{\E}[\y]}\K[\x-\y]\integralde\y\in\intervallo{[]}{-\infty}{+\infty}
     \end{equation}
     for every $\x\in\Rd$.
 \end{definition}
 \begin{remark}
     Let $E\subset\Rd$ be a measurable set and $\x\in\Rd$.
     We observe that \eqref{eq:NonlocalCurvature} is well-defined, being,
     \begin{equation*}
             \integral{\Rd\setminus\BallRadiusCenter{\eps}{\x}}
             \abs*{\CharFun{\comp{\E}}[\y]-\CharFun{\E}[\y]}\K[\x-\y]\integralde\y
         \leq
         2
         \integral{\Rd\setminus\BallRadius{\eps}}
             \K[\z]\integralde\z<+\infty,
     \end{equation*}
     for every $\eps>0$.
     We also observe that, \begin{equation*}
         \HKbdryE[\x]
         =
         \limsup_{\eps\to0^+}
         \HKepsbdryE[\x]\in\intervallo{[]}{-\infty}{+\infty},
     \end{equation*}
     where $\KFAMILY$ is a family of kernels defined by
     $\Keps\coloneqq\CharFun{\comp{\Beps}}\K$ for every $\eps>0$.
 \end{remark}
 
 \begin{lemma}[][res:HKBalls]
     Let $\K$ be a kernel satisfying \eqref{H:Sym} and \eqref{H:Far}.
     Then, $\HKbdryBr[\x]\in\intervallo{[]}{0}{+\infty}$ for every $\r>0$ and $\x\in\bdry{\Br}$.
 \end{lemma}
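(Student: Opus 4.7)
The plan is to exploit the symmetry assumption \eqref{H:Sym} together with the convexity of $\Br$, via a reflection of the ambient space through the boundary point $\x$.

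First, I would fix $\r>0$ and $\x\in\bdry\Br$ (so $\EucNorm{\x}=\r$) and introduce the tangent hyperplane $H\coloneqq\{\y\in\Rd : (\y-\x)\cdot\x=0\}$ together with the two open half-spaces $H^{+}\coloneqq\{\y : (\y-\x)\cdot\x>0\}$ and $H^{-}\coloneqq\{\y : (\y-\x)\cdot\x<0\}$. A one-line Cauchy--Schwarz estimate gives $\y\cdot\x<\r^{2}=\x\cdot\x$ for every $\y\in\Br$, so that $\Br\subset H^{-}\cup H$, and in particular $H^{+}\subset\comp{\Br}$.

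Next, for each $\eps>0$ I would split the integral defining $\HKepsbdryBr[\x]$ along $H^{+}_{\eps}\coloneqq H^{+}\setminus\BallRadiusCenter{\eps}{\x}$ and $H^{-}_{\eps}\coloneqq H^{-}\setminus\BallRadiusCenter{\eps}{\x}$ (discarding the measure-zero hyperplane $H$). On $H^{+}_{\eps}$ the integrand collapses to $\K[\x-\y]$ by the previous step. The key move is to apply the reflection $\y\mapsto 2\x-\y$, which is a volume-preserving involution that bijects $H^{+}_{\eps}$ onto $H^{-}_{\eps}$ and leaves $\BallRadiusCenter{\eps}{\x}$ invariant; by \eqref{H:Sym} it also preserves the factor $\K[\x-\y]$, since $\K[\x-(2\x-\y)]=\K[-(\x-\y)]=\K[\x-\y]$. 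Combining the two pieces should yield
\begin{equation*}
    \HKepsbdryBr[\x]=\int_{H^{-}_{\eps}}\bigl(1+\CharFun{\comp{\Br}}[\y]-\CharFun{\Br}[\y]\bigr)\K[\x-\y]\,\de\y,
\end{equation*}
whose integrand is pointwise nonnegative: it equals $0$ on $\Br$ and $2$ on $\comp{\Br}$.

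Finally, passing to the limsup as $\eps\to 0^{+}$ gives $\HKbdryBr[\x]\in\intervallo{[]}{0}{+\infty}$. I do not anticipate any real obstacle: once the tangent-hyperplane reflection is set up, the convexity of $\Br$ and the symmetry of $\K$ do all the work, and the only point requiring any care is the strict containment $H^{+}\subset\comp{\Br}$ coming from Cauchy--Schwarz.
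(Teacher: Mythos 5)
Your proof is correct and follows essentially the same strategy as the paper's: both split $\R^d\setminus\BallRadiusCenter{\eps}{\x}$ along the tangent hyperplane at $\x$, use \eqref{H:Sym} (via the point reflection $\y\mapsto 2\x-\y$, which the paper performs as its unnamed "change of variables") to cancel the two half-space contributions, and reduce $\HKepsbdryBr[\x]$ to $2\int_{(\Hmeno+\x)\setminus\Br}\Keps[\x-\y]\,\de\y\geq 0$ before taking the limsup.
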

 \begin{proof}
     Let $\r>0$ and $\x\in\bdry{\Br}$. For simplicity, we assume that $x=(0,...,0,r)$, the other cases being analogous. We put $\Hpiu\coloneqq\graffe*{\x\in\Rd:\x[\d]>0}$ and $\Hmeno\coloneqq\graffe*{\x\in\Rd:\x[\d]<0}$ and for every $\eps>0$
     we define the kernel $\Keps\coloneqq\CharFun{\comp{\Beps}}\K$.
     Then, being $\comp{\Br}=(\Hpiu+\x)\sqcup((\Hmeno+\x)\setminus\Br)$
     and $(\Hmeno+\x)=\Br\sqcup((\Hmeno+\x)\setminus\Br)$, we can write
     \begin{align*}
         \HKepsbdryBr[\x]
         &=
         \integral{\comp{\Br}}\Keps[\x-\y]\integralde\y-\integral{\Br}\Keps[\x-\y]\integralde\y\\
         &=
         \integral{\Hpiu+\x}\Keps[\x-\y]\integralde\y
         + \integral{(\Hmeno+\x)\setminus\Br}\Keps[\x-\y]\integralde\y\\
         &\phantom{=}-
         \integral{\Hmeno+\x}\Keps[\x-\y]\integralde\y
         +
         \integral{(\Hmeno+\x)\setminus\Br}\Keps[\x-\y]\integralde\y.
     \end{align*}
     Since $\Keps$ satisfies \eqref{H:Sym}, by a change of variables we get,
     \begin{equation*}
         \integral{\Hpiu+\x}\Keps[\x-\y]\integralde\y
         =
         \integral{\Hmeno+\x}\Keps[\x-\y]\integralde\y.
     \end{equation*}
     This implies that, 
     \begin{equation*}
         \HKepsbdryBr[\x]
        =
         2\integral{(\Hmeno+\x)\setminus\Br}\Keps[\x-\y]\integralde\y\geq0,
     \end{equation*}
     which allows us to deduce that,
     \begin{equation*}
         \HKbdryE[\x]
         =
         \limsup_{\eps\to0^+}
         \HKepsbdryBr[\x]\geq0.
     \end{equation*}
     This concludes the proof.
 \end{proof}
 \begin{remark}
 With the same argument used in the proof of \cref{res:HKBalls}, we can show that the non-local curvature of a convex set $C$ is non-negative for any point on $\partial C$. Indeed, for any $x\in \partial C$, we can consider as $H^+$ the half-space whose boundary coincides with the tangent space of $\partial C$ at $x$ and which does not contain $C$, and as $H^-$ its complement. 
 \end{remark}
 
 \begin{lemma}[][res:PKBRifKsmooth]
     Let $\K$ be a kernel satisfying \eqref{H:Sym} and s.t. 
     $\K\in\Cspace[1][\RdmenoOrigine]$ and 
     $\sup_{\z\in\RdmenoOrigine}\K[\z]\EucNorm{\z}^{\d+\s}<+\infty$
     for some $\s\in\intervallo{()}{0}{1}$.
     Then,
     the function $\intervallo{()}{0}{+\infty}\ni\r\mapsto\PK[\Br]$
     is monotone non-decreasing.
 \end{lemma}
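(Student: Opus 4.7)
The plan is to establish the identity
\begin{equation*}
    \PK[\Brp]-\PK[\Br]=\int_{\Brp\setminus\Br}\tonde*{\int_{\comp{\Brp}}\K[\x-\y]\integralde\y-\int_{\Br}\K[\x-\y]\integralde\y}\integralde\x
\end{equation*}
for all $0<\r<\rp$, and then to show that the right-hand side is non-negative. The identity itself follows directly from the definition of $\PK$ upon decomposing $\Brp=\Br\sqcup(\Brp\setminus\Br)$ and $\comp{\Br}=(\Brp\setminus\Br)\sqcup\comp{\Brp}$, and using \eqref{H:Sym} together with Fubini to swap $\x$ and $\y$ in the resulting cross term.

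For the non-negativity, for each $\eps>0$ I would excise $\BallRadiusCenter{\eps}{\x}$ from the two inner integrals, obtain a regularized integrand $I^{\eps}(\x)$, and split $I^{\eps}(\x)=H^{\eps}(\x)+J^{\eps}(\x)$, where
\begin{equation*}
    H^{\eps}(\x)\coloneqq\int_{\comp{\BallSymbol_{\EucNorm{\x}}}\setminus\BallRadiusCenter{\eps}{\x}}\K[\x-\y]\integralde\y-\int_{\BallSymbol_{\EucNorm{\x}}\setminus\BallRadiusCenter{\eps}{\x}}\K[\x-\y]\integralde\y
\end{equation*}
is the $\eps$-truncated nonlocal curvature at $\x$ of the sphere $\partial\BallSymbol_{\EucNorm{\x}}$, and $J^{\eps}(\x)$ gathers the remaining contributions carried by the two annuli $\BallSymbol_{\EucNorm{\x}}\setminus\Br$ and $\Brp\setminus\BallSymbol_{\EucNorm{\x}}$. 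The reflection argument used in the proof of \cref{res:HKBalls} applies verbatim to the truncated kernel and yields $H^{\eps}(\x)\geq 0$ pointwise, whereas by Fubini the two terms comprising $\int_{\Brp\setminus\Br}J^{\eps}\integralde\x$ become double integrals supported on the regions $\{(\x,\y):\r\leq\EucNorm{\y}<\EucNorm{\x}<\rp,\,\EucNorm{\x-\y}>\eps\}$ and $\{(\x,\y):\r\leq\EucNorm{\x}\leq\EucNorm{\y}<\rp,\,\EucNorm{\x-\y}>\eps\}$, which coincide up to a Lebesgue-negligible set after the swap $\x\leftrightarrow\y$ and an appeal to \eqref{H:Sym}. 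Hence $\int_{\Brp\setminus\Br}J^{\eps}\integralde\x=0$, so that $\int_{\Brp\setminus\Br}I^{\eps}(\x)\integralde\x=\int_{\Brp\setminus\Br}H^{\eps}(\x)\integralde\x\geq 0$ for every $\eps>0$.

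To conclude, I would invoke the decay hypothesis $\sup_{\z\in\RdmenoOrigine}\K[\z]\EucNorm{\z}^{\d+\s}<+\infty$ with $\s<1$ to obtain a spherical-cap estimate of the form $\int_{\Brp\setminus\Br}|I^{\eps}(\x)-I(\x)|\integralde\x=O(\eps^{1-\s})$, since the discrepancy is localized in an $\eps$-neighbourhood of $\partial\Br\cup\partial\Brp$ and the singularity $|\x-\y|^{-\d-\s}$ is integrable against the corresponding thin shell precisely because $\s<1$. Passing to the limit $\eps\to 0^+$ then gives $\PK[\Brp]\geq\PK[\Br]$. The main technical obstacle is exactly this last step: the exponent constraint $\s<1$ is what balances the singularity of $\K$ against the vanishing measure of the boundary strip, and it is here that the full strength of the decay hypothesis enters.
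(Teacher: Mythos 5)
Your proof is correct, but it takes a genuinely different route from the paper's. The paper fixes $\r>0$, builds a normal vector field $\X$ extending $\x/\r$, and invokes the first variation formula of \cite[Theorem 6.1]{FigFusMagMilMor15-MR3322379} to conclude that $\r\mapsto\PK[\Br]$ is differentiable with derivative $\integral{\bdry{\Br}}\HKbdryBr\,\de\Hdmenouno\geq0$, the non-negativity coming from \cref{res:HKBalls}; this is precisely why the lemma assumes $\K\in\Cspace[1][\RdmenoOrigine]$ and the decay bound with $\s\in\intervallo{()}{0}{1}$ — these are the hypotheses of the cited variation formula. You instead prove the monotonicity directly: the increment identity is correct (both perimeters are finite under the decay bound, so the subtraction is legitimate), your $H^{\eps}(\x)$ is exactly $\HKepsbdryE[\x]$ for $\E=\BallSymbol_{\EucNorm{\x}}$ with the truncated kernel $\Keps=\CharFun{\comp{\Beps}}\K$, so the reflection argument of \cref{res:HKBalls} does apply verbatim, and the Fubini/swap cancellation of $\int J^{\eps}$ is sound since the two regions differ only on $\graffe{\EucNorm{\x}=\EucNorm{\y}}$ and the constraint $\EucNorm{\x-\y}>\eps$ is symmetric. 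Your closing estimate $O(\eps^{1-\s})$ is also valid, though it is heavier than necessary: since $I^{\eps}(\x)=I(\x)$ once $\eps<\min\graffe{\EucNorm{\x}-\r,\rp-\EucNorm{\x}}$ and $\abs{I^{\eps}(\x)}$ is dominated by $\integral{\comp{\Brp}}\K[\x-\y]\integralde\y+\integral{\Br}\K[\x-\y]\integralde\y$, which is integrable over the annulus because both perimeters are finite, dominated convergence suffices. What your approach buys is that it never uses $\K\in\Cspace*[1][\RdmenoOrigine]$ at all — in effect you are proving a version of \cref{res:PKBR} directly for kernels with finite ball perimeters, bypassing the two-step approximation (\cref{res:approximK} and truncation) that the paper uses to pass from the smooth case to the general one; what the paper's route buys is brevity, at the cost of outsourcing the analysis to the first-variation machinery.
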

 
 \begin{proof} 
     We observe that the assumptions imply that $\K$ satisfies \eqref{H:Far}.
     Let $\f[\r]\coloneqq\PK[\Br]$ for every $\r>0$.
     Let $\nuBr[\x]\coloneqq\x/\r$ for every $\x\in\bdry{\Br}$ be the outer unit normal to $\bdry{\Br}$.
     Fix $\r>0$ and let $\X\in\CcptSpace[\infty][\Rd][\Rd]$ s.t. $\X[\x]=\nuBr[\x]$ for every $\x\in\bdry{\Br}$.
     Let $\Phit[\x]\coloneqq\x+\t\X[\x]$ for every $\t\in\R$ and $\x\in\Rd$.
   
     Thanks to \cite[Theorem 6.1]{FigFusMagMilMor15-MR3322379}, $\f$ is differentiable and
     \begin{align*}
         \odv{\f}{\r}\tonde{\r}
         =
         \odv{}{\t}_{\t=0}\PK[\Phit[\Br]]
         =
         \integral{\bdry{\Br}}\HKbdryBr\X\scalar\nuBr\Hdmenouno
         =
         \integral{\bdry{\Br}}\HKbdryBr\Hdmenouno.
     \end{align*}
     We conclude that $\odv{\f}{\r}\tonde{\r}\geq0$ by \cref{res:HKBalls}.
 \end{proof}
 
 \begin{lemma}[][res:approximK]
     Let $\KSEQUENCE\subset\LoneRd$ be a sequence of kernels and $\K\in\LoneRd$ be a kernel.
     If $\Kk\weaklyto\K$ in $\LoneRd$, then $\lim_{\k} [u]_{W^{K_k,1}(\R^d)} =[u]_{W^{K,1}(\R^d)}$ for every $\u\in\LoneRd$.
 \end{lemma}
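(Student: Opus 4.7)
The plan is to reduce the claim to testing the weakly convergent sequence $\{K_k\}$ against a bounded, measurable function of $h$. Recall the identity \eqref{eq:seminormWKpalt}, which for $p=1$ yields
\begin{equation*}
    [u]_{W^{K,1}(\R^d)} = \int_{\R^d} \|\Delta_h u\|_{L^1(\R^d)} K(h) \, \de h,
\end{equation*}
and the analogous identity with $K$ replaced by $K_k$. Hence, setting $g(h) \coloneqq \|\Delta_h u\|_{L^1(\R^d)}$, the claim becomes
\begin{equation*}
    \lim_{k\to\infty} \int_{\R^d} g(h) K_k(h)\, \de h = \int_{\R^d} g(h) K(h)\, \de h.
\end{equation*}

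First, I would observe that $g \in L^\infty(\R^d)$: indeed, by the triangle inequality and a change of variables, $g(h) \leq 2\|u\|_{L^1(\R^d)}$ for every $h \in \R^d$. (Measurability of $g$ is immediate from Fubini's theorem, though in fact $g$ is continuous on $\R^d$, being the modulus of continuity of translation in $L^1$.) Therefore $g$ lies in the dual of $L^1(\R^d)$.

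Then I would conclude by applying the definition of weak convergence in $L^1(\R^d)$: since $K_k \weaklyto K$ in $L^1(\R^d)$ and $g \in L^\infty(\R^d)$, we have by duality that $\int_{\R^d} g K_k \, \de h \to \int_{\R^d} g K \, \de h$, which is exactly the desired statement.

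There is no real obstacle; the content of the proof is recognizing that the seminorm can be rewritten in the form $\int g(h) K(h)\, \de h$ with $g \in L^\infty$, after which weak $L^1$-convergence of the kernels supplies the conclusion automatically.
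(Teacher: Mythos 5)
Your proof is correct and follows exactly the paper's argument: rewrite the seminorm via \eqref{eq:seminormWKpalt} as $\int_{\R^d} U(h)K_k(h)\,\de h$ with $U(h)=\|\Delta_h u\|_{L^1(\R^d)}$, note that $\|U\|_{L^\infty(\R^d)}\leq 2\|u\|_{L^1(\R^d)}$, and pass to the limit by testing the weak $L^1$-convergence against this $L^\infty$ function. No differences worth noting.
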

 \begin{proof}
     We define $\U[\h]\coloneqq\normLoneRd{\Deltahu}$ for every $\h\in\Rd$,
     so that
     \begin{equation}\label{eq:seminormBVKkalt}
         [u]_{W^{K_k,1}(\R^d)}
         =
         \intRd
         \U[\h]
         \Kk[\h]
         \integralde\h,
     \end{equation}
     for every $\k\in\N$.
 
     Since $\U\in\LinftyRd$, with $\norm{\U}_{\LinftyRd}\leq2\norm{\u}_{\LoneRd}$,
     we can pass to the limit in \eqref{eq:seminormBVKkalt} and conclude the proof.
 \end{proof}

 \begin{proposition}[][res:PKBR]
     Let $\K$ be a kernel.
     Then,
     the function $\intervallo{()}{0}{+\infty}\ni\r\mapsto\PK[\Br]$
     is monotone non-decreasing.
 \end{proposition}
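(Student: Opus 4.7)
The plan is to reduce the claim, by approximation, to the smooth case already handled in \cref{res:PKBRifKsmooth}, using \cref{res:approximK} and monotone convergence for the two limit passages. First I would apply \cref{res:KsymSubstK} with $\u=\CharFun{\Br}$ to obtain $\PK[\Br]=\PKsym[\Br]$ for every $\r>0$, so that one may replace $\K$ by $\Ksym$ and assume without loss of generality that $\K$ satisfies \eqref{H:Sym}. If $\PK[\Br]=+\infty$ for every $\r>0$ the claim is trivial; otherwise the goal is to recover $\PK[\Br]$ as an increasing limit of perimeters associated to smooth, bounded, compactly supported kernels.

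For each integer $n\geq 2$ I would set
\begin{equation*}
    K_n \coloneqq \min\graffe*{\K,n}\,\CharFun{\BallRadius{n}\setminus\BallRadius{1/n}},
\end{equation*}
which is symmetric, bounded, supported in a compact annulus away from the origin, and in particular belongs to $\LoneRd$. Letting $\graffe*{\rho_\eps}_{\eps>0}$ be a family of symmetric smooth mollifiers, I would then define $K_{n,\eps}\coloneqq \rho_\eps \convolution K_n$. The convolution $K_{n,\eps}$ is smooth on all of $\Rd$, symmetric, bounded, and compactly supported, so in particular
\begin{equation*}
    \sup_{\z\in\Rd\setminus\graffe{0}} K_{n,\eps}(\z)\EucNorm{\z}^{\d+\s}<+\infty \qquad \text{for every } \s\in\intervallo{()}{0}{1}.
\end{equation*}
Hence \cref{res:PKBRifKsmooth} applies to $K_{n,\eps}$ and yields that $\r\mapsto \NonlocalPerimeterSymbol_{K_{n,\eps}}(\Br)$ is monotone non-decreasing.

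Next, as $\eps\to 0^+$, $K_{n,\eps}\to K_n$ strongly (hence weakly) in $\LoneRd$, and since $\CharFun{\Br}\in\LoneRd$, \cref{res:approximK} applied with $\u=\CharFun{\Br}$ gives $\NonlocalPerimeterSymbol_{K_{n,\eps}}(\Br)\to \NonlocalPerimeterSymbol_{K_n}(\Br)$. Passing to the limit preserves monotonicity, so $\r\mapsto \NonlocalPerimeterSymbol_{K_n}(\Br)$ is non-decreasing. Finally, $K_n\uparrow \K$ pointwise on $\Rd\setminus\graffe{0}$ as $n\to\infty$, so applying the monotone convergence theorem to the double integral defining $\NonlocalPerimeterSymbol_{K_n}(\Br)$ gives $\NonlocalPerimeterSymbol_{K_n}(\Br)\uparrow \PK[\Br]$ for every $\r>0$, and a pointwise supremum of non-decreasing functions is non-decreasing.

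The main obstacle is essentially bookkeeping: one must check that the two-step approximation (truncation plus mollification) actually lands inside the hypotheses of \cref{res:PKBRifKsmooth}, i.e. smoothness on $\Rd\setminus\graffe{0}$, symmetry, and the polynomial decay bound (the last being automatic once the kernel is bounded and compactly supported). The two limit passages are then justified by tools already developed, namely \cref{res:approximK} for the $\eps\to 0^+$ limit at fixed $n$ and monotone convergence for $n\to\infty$.
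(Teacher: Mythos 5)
Your proof is correct and follows essentially the same route as the paper's: reduce to the smooth case of \cref{res:PKBRifKsmooth} by approximating $\K$ with nice $\LoneRd$ kernels, pass to the limit at fixed truncation level via \cref{res:approximK}, and then let the truncation go to infinity by monotone convergence. The one genuine (minor) difference is your choice of truncation $\min\graffe{\K,n}\CharFun{\BallRadius{n}\setminus\BallRadius{1/n}}$, which is in $\LoneRd$ unconditionally; the paper instead truncates only the values, $\min\graffe{k,\K}$, which forces it to first dispose of the case where \eqref{H:Far} fails via \cref{res:NonFarBVKtrivial} (so that $\PK[\Br]\equiv+\infty$) and to assume \eqref{H:Far} thereafter, a case distinction your version renders unnecessary.
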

 \begin{proof}
     Without loss of generality, recalling  \cref{res:KsymSubstK}, we may assume that \eqref{H:Sym} holds.
     If $\K$ does not satisfy \eqref{H:Far}, then by \cref{res:NonFarBVKtrivial}
     $\PK[\Br]=+\infty$ for every $\r>0$, so that there is nothing to prove.
    Therefore, we assume that \eqref{H:Far} holds and observe that
    \cref{res:NonFar} guarantees then that $\K\in\Llocspace[1][\RdmenoOrigine]$.
    Let $0<\r<\Rpos$.
 
     \textit{Step 1.}
     Assume that $\K\in\LoneRd$.
     Then we can find a sequence $\KSEQUENCE\subset\CcinftyRd$
     s.t $\Kk\to\K$ in $\LoneRd$.
     For every $\k\in\N$ we can also assume that $\Kk$ satisfies \eqref{H:Sym} 
     up to replacing it with the kernel defined by $\tonde*{\Kk[\x]+\Kk[-\x]}/2$ 
     for every $\x\in\Rd$ (see \cref{res:KsymSubstK}).
     In particular, $\Kk$ satisfies the hypotheses of \cref{res:PKBRifKsmooth}
     for every $\k\in\N$, which implies that $\PKk[\Br]\leq\PKk[\BR]$ for every $\k\in\N$.
     Thanks to \cref{res:approximK} we conclude that $\PK[\Br]\leq\PK[\BR]$. 
 
     \textit{Step 2.}
     Let $\K\in\Llocspace[1][\RdmenoOrigine]$ 
     satisfying \eqref{H:Sym} 
     and define $\Kk\coloneqq\min\graffe{\k,\K}$ for every $\k\in\N$.
     Then, $\Kk\in\LoneRd$ and it satisfies \eqref{H:Sym} for every $\k\in\N$.
     By the previous step and since $\Kk\leq\K$ for every $\k\in\N$, we obtain
     \begin{equation}\label{eq:stima}
         \PKk[\Br]\leq\PKk[\BR]\leq\PK[\BR].
     \end{equation}   
     By the monotone convergence theorem (or by Fatou's lemma) from \eqref{eq:stima} we conclude that $\PK[\Br]\leq\PK[\BR]$.
 \end{proof}

 \begin{remark}
     If $\K$ is a kernel satisfying
     \NtsOne{},
     recalling \cite[eq. (2.8)]{BesSte25-MR4845992},
     we can obtain the following bound
     \begin{equation*}
         \PK[\Br]\leq\max\graffe*{
             \lebd{\BallRadius{1}}\r^{\d},\frac{1}{2}\Hdmenouno\tonde*{\bdry{\BallRadius{1}}}\r^{\d-1}
         }
         \integral{\Rd}\min\graffe*{1,\EucNorm{\x}}\K[\x]\integralde\x,
     \end{equation*}
     for every $\r>0$.
 \end{remark}

 We now provide an explicit formula
 to compute the non-local perimeter of one-dimensional balls.

 \begin{proposition}[][res:formulaPKinter]
     Let $\d=1$ and 
     let $\K$ be a kernel satisfying \eqref{H:Sym} and \eqref{H:Far}.
     Let $\xzero\in\intervallo{()}{0}{+\infty}$, $\czero\in\R$ and
      $\G\in\Wlocspace[1,1][\intervallo{()}{0}{+\infty}]$ defined by
     $\G[\x]\coloneqq\int_{\xzero}^{\x}\K[\y]\integralde\y+\czero$ for every $\x>0$.
     Let $\H\in\Cspace[1][\intervallo{()}{0}{+\infty}]$ s.t.
     $\odv{\H}{\x}=\G$.
     Then, for every $\r>0$,
     \begin{equation}\label{eq:formulaPKinter}
         \frac{1}{2}\PK[\Ir]=2\G[+\infty]\r+\H[0^+]-\H[2\r],
     \end{equation}
     where $\H[0^+]\coloneqq\lim_{\x\to0^+}\H[\x]\in\intervallo{(]}{-\infty}{+\infty}$
     and $\G[+\infty]\coloneqq\lim_{\x\to+\infty}\G[\x]\in\intervallo{()}{-\infty}{+\infty}$. 
 \end{proposition}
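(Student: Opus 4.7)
The plan is to unfold the definition and reduce the double integral to a one-variable computation involving $\G$ and $\H$. Starting from the identity $\PK[\Ir] = \tfrac{1}{2}[\CharFun{\Ir}]_{\nonlocalSobolevspace[\K,1][\R]}$ and noting that the integrand $|\CharFun{\Ir}[x] - \CharFun{\Ir}[y]|\K[x-y]$ is non-zero precisely when exactly one of $x,y$ lies in $\Ir$, the swap $(x,y)\leftrightarrow(y,x)$ together with \eqref{H:Sym} allows to eliminate the factor $\tfrac{1}{2}$ and obtain
$$\PK[\Ir] = \int_{-\r}^{\r}\int_{|y|>\r}\K[x-y]\,dy\,dx.$$

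Next, I would split the inner integral as $\int_{-\infty}^{-\r}+\int_{\r}^{+\infty}$ and apply the change of variables $z=x-y$ in each piece. Using \eqref{H:Sym} to rewrite the contribution of the second piece as $\int_{\r-x}^{+\infty}\K[z]\,dz$ and then the reflection $x\mapsto -x$ in the outer integration, both contributions coincide with $\int_{x+\r}^{+\infty}\K[z]\,dz$, yielding
$$\PK[\Ir] = 2\int_{-\r}^{\r}\int_{x+\r}^{+\infty}\K[z]\,dz\,dx.$$
The substitution $u = x + \r$ in the outer integral, combined with $\int_u^{+\infty}\K[z]\,dz = \G[+\infty] - \G[u]$ (the basepoint $\xzero$ and the constant $\czero$ cancel in the difference, since $G' = K$ on $(0,+\infty)$), transforms the expression into $2\int_0^{2\r}(\G[+\infty] - \G[u])\,du$. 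Using $H' = G$ and the fundamental theorem of calculus on $(0,+\infty)$, the second piece evaluates to $\H[2\r] - \H[0^+]$, which, after dividing by two, yields \eqref{eq:formulaPKinter}.

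The only delicate point is to ensure that the boundary values $\G[+\infty]$ and $\H[0^+]$ are well-defined and that the various exchanges of integrals are legitimate. Hypothesis \eqref{H:Far} guarantees $\G[+\infty] \in \R$, while \cref{res:NonFar} gives $\K \in \Llocspace[1][\R\setminus\graffe*{0}]$, so that $G$ is continuous on $(0,+\infty)$ and $H$ is $C^1$ there. If $K$ fails to be integrable near the origin, then $\G[u]\to -\infty$ as $u \to 0^+$ and $\H[0^+] = +\infty$, in which case the identity correctly reads $\PK[\Ir] = +\infty$. All manipulations above are legitimate because the integrands $\K[x-y]$ and $\G[+\infty]-\G[u]$ are non-negative, so Fubini--Tonelli applies unrestrictedly; the computation itself is not the hard part, the real care lies only in tracking these limit values.
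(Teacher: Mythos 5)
Your proof is correct and follows essentially the same route as the paper: both unfold the definition, use \eqref{H:Sym} to reduce to a single double integral over $I_r\times I_r^c$, and evaluate it via the antiderivatives $G$ and $H$, with the same treatment of the possibly infinite value $H(0^+)$. The only difference is the order of integration: by integrating out the exterior variable first you produce $G(+\infty)-G(u)$ directly, whereas the paper integrates the interior variable first and must then identify the boundary term at infinity, $\lim_{M\to+\infty}\bigl(H(r+M)-H(M-r)\bigr)=2rG(+\infty)$, via Lagrange's mean value theorem — a step your arrangement avoids.
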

 \begin{proof}
     Recalling \cref{res:NonFar}, we observe that 
     $\K\in\Llocspace[1][\R\setminus\graffe{0}]$,
     so that $\G\in\Wlocspace[1,1][\intervallo{()}{0}{+\infty}]$ is well-defined.
 
     We observe that $\G$ is monotone non-decreasing
     and that $\H$ is convex.
     Furthermore, either $\H$ is monotone (either non-decreasing, or non-increasing) in $\intervallo{()}{0}{+\infty}$,
     or $\H$ is monotone non-increasing in $\intervallo{(]}{0}{\xp}$
     and monotone non-decreasing in $\intervallo{[)}{\xp}{+\infty}$ for some $\xp>0$.
     In any case, $\lim_{\x\to0^+}\H[\x]\in\intervallo{(]}{-\infty}{+\infty}$
     and $\lim_{\x\to+\infty}\G[\x]\in\intervallo{()}{-\infty}{+\infty}$ ($\G[\x]<+\infty$ thanks to \eqref{H:Far}). 
     We can then fix $\r>0$ and compute,
 
     \begin{align*}
        \frac{1}{2}\PK[\Ir]
         &=
         \unmezzo\tonde*{\intmenoinftymenor\de\y+\intrinfty\de\y}\intmenorr\K[\x-\y]\integralde\x\\
         &=
         \intmenoinftymenor\intmenorr\K[\x-\y]\integralde\x\de\y \\
         &=
         \intmenoinftymenor\tonde*{\G[\r-\y]-\G[-\r-\y]}\de\y\\
         &\overset{\hypertarget{circledast}{\circledast}}{=}
         \lim_{\M\to+\infty}\intmenoMmenor
         \odv{}{\y}\quadre*{-\H[\r-\y]+\H[-\r-\y]}
         \integralde\y\\
         &=
         -\H[2\r]+\H[0^+]
         +
         \lim_{\M\to+\infty}
         \tonde*{\H[\r+\M]-\H[-\r+\M]},
     \end{align*}
     where the second equality is a consequence of \eqref{H:Sym}, while the equality marked by $\newlink{circledast}{\circledast}$ is justified by the monotone convergence theorem,
     being $\G[\r-\y]-\G[-\r-\y]=\intmenorr\K[\x-\y]\geq0$ for any $\y\in\intervallo{()}{-\infty}{-\r}$.

     By Lagrange's mean value theorem for every $\M>0$ we can find $\crM\in\intervallo{()}{-\r+\M}{\r+\M}$ s.t.
     $\H[\r+\M]-\H[-\r+\M]=\G[\crM]2\r$.
     Therefore, 
     \begin{equation*}
         \lim_{\M\to+\infty}
         \tonde*{\H[\r+\M]-\H[-\r+\M]}=\G[+\infty]2\r,
     \end{equation*}
     which completes the proof of \eqref{eq:formulaPKinter}.
 \end{proof}
 \begin{remark}
     Under the assumptions of \cref{res:formulaPKinter},
     if $\K$ is such that $\PK[\Ir]<+\infty$ for some $\r>0$,
     then the function
     $\intervallo{()}{0}{+\infty}\ni\r\mapsto\PK[\Ir]$
     is everywhere finite and,
     taking the first derivative in \eqref{eq:formulaPKinter},
     we observe that it
     is monotone non-decreasing, concave and at least of class $\Cspace[1][\intervallo{()}{0}{+\infty}]$.
 \end{remark}
    \begin{example}
        As an application of \cref{res:formulaPKinter}, we can obtain explicit formulas
        for the the non-local perimeter of balls of some one-dimensional kernels. This is useful in order to ascertain the validity of \eqref{isoperimetricassumption}, which is 
        needed to obtain \cref{prop:poincaré,cor:continuousembedding,prop:sobolev} (see also \cref{cor:localisoperimetric}). 

        If we consider the fractional kernel $\Ks[\x]=\abs{\x}^{-1-\s}$ with $\s\in\intervallo{()}{0}{1}$ in dimension 1, 
        we can choose $\G[\x]=-\x^{-\s}/\s$ and $\H[\x]=-\x^{1 - \s}/((1 - \s) \s)$ for $\x>0$ 
        to find $\PKs[\intervallo{()}{-\r}{+\r}]=-2\H[2\r]$ for every $\r>0$. 
        This tells that \eqref{isoperimetricassumption} holds if the exponent $1/q$ equals $1-\s$; i.e., for $q=\frac{1}{1-s}$.   

        If we consider the kernel
        $\K[\x]=\abs{\x}^{-1}\tonde*{-\log\abs{\x}}^{\gamma-1}\CharFun{\intervallo{()}{-1/3}{1/3}}[\x]$,
        where $\gamma>0$, cfr. \cite[Remark 2.20]{BesSte25-MR4845992} and \cite{BruNgu20-MR3995732},
        we can choose 
            \begin{align*}
                \G[\x]&=
                \begin{cases} 
                    - \frac{\tonde{-\log{\left(\x \right)}}^{\gamma}}{\gamma} & 
                    \text{for}\: 0 < \x < \frac{1}{3} 
                    \\[4pt]-\frac{\left(\log{\left(3 \right)}\right)^{\gamma}}{\gamma}
                    & \text{for}\: \x \geq \frac{1}{3} 
                \end{cases},
                \\
                \H[\x]
                &=
                \begin{cases} 
                    - \frac{ \int_0^{\x} \tonde{-\log{\left(\t \right)}}^{\gamma}\, \de\t}{\gamma}
                    & \text{for}\: 0 < \x < \frac{1}{3} \\
                    -\frac{\left(\log{\left(3 \right)}\right)^{\gamma}}{\gamma}\tonde{\x-\frac{1}{3}} 
                    - \frac{\int_0^{1/3} \tonde{-\log{\left(\t \right)}}^{\gamma}\, \de\t}{\gamma}
                    & \text{for}\: \x \geq \frac{1}{3} 
                \end{cases},   
            \end{align*}
            so that
            $\phi[\r]\coloneqq\unmezzo\PK[\intervallo{()}{-\r/2}{+\r/2}]=\G[+\infty]\r-\H[\r]
            =
            -\frac{\left(\log{\left(3 \right)}\right)^{\gamma}}{\gamma}\r-\H[\r]$,
            for every $\r>0$.

            We first observe that $\phi$ is constant on $\intervallo{[)}{1/3}{+\infty}$,
            so that \eqref{isoperimetricassumption} cannot hold.
            This is not surprising, since $\K$ has compact support.

            Nevertheless, we can consider a variant of $\K$ whose behavior in $\intervallo{[)}{1/3}{+\infty}$
            is, for instance, of fractional type, so that $\phi$ restricted to $\intervallo{[)}{1/3}{+\infty}$ 
            is bounded from below by ${\r}^{\alpha}$ for some $\alpha\in\intervallo{(]}{0}{1}$. 
            We then deduce that $\lim_{\r\to0^+}\phi[\r]/\r=+\infty$
            and $\lim_{\r\to0^+}\phi[\r]/{\r}^{\alpha}=0$ for every $\alpha\in\intervallo{()}{0}{1}$.
            This, tells that if \eqref{isoperimetricassumption} holds,
            then necessarily the power of the volume in \eqref{isoperimetricassumption} must be $1$.
    \end{example}

\subsection{Existence of non-local isoperimetric sets}

We now discuss some results in the literature related to the
existence of minimizers of the problem $\NonlocalIsopPBKernelVolume{\K*}{\m}$.

Let $\m>0$. \cite[Theorem 5.6 and Proposition 5.4]{CesNov18-MR3732175} guarantee that if $\K\in\LoneRd$ is a 
generic kernel, then a suitable relaxation of $\NonlocalIsopPBKernelVolume{\K*}{\m}$
admits at least a solution and every solution of this relaxed problem has compact support. 
Moreover, in \cite[Theorem 5.7 and Remark 5.8]{CesNov18-MR3732175} it is shown that 
if $\K\in\LoneRd$ satisfies a further technical condition 
(which is satisfied for instance if the kernel is positive definite)
then $\NonlocalIsopPBKernelVolume{\K*}{\m}$ admits at least a solution, which is compact.
A key ingredient for these results is \cref{res:equiv_PB_PK_VK} together with
suitable first and second variations formulas.

On the other hand, if $\K$ is a sufficiently regular kernel (satisfying the assumptions of \cref{res:formulaPKinter}) and if 
$\E$ is a solution of $\NonlocalIsopPBKernelVolume{\K*}{\m}$, then it is 
a volume-constrained stationary set for the perimeter $\PK$ and as a consequence of
\cite[Theorem 6.1]{FigFusMagMilMor15-MR3322379}
the non-local curvature $\HKbdryE$ (recall \cref{def:NonlocalCurvature}) is constant on $\bdry{\E}$.

Under symmetry assumptions, \cite[Theorem 2.19]{BesSte25-MR4845992} (cfr. also \cite[Proposition 3.1]{CesNov18-MR3732175})
tells us that $\NonlocalIsopPBKernelVolume{\K*}{\m}$ is solved 
by euclidean balls of volume $\m$ (which are the unique solutions) if $\K$
is a radial and decreasing kernel with respect to the euclidean norm, i.e.
if $\K$ satisfies \textup{(}\hyperref[H:Dec]{$\mathrm{Dec(}1,\EucNorm{\cdot}\mathrm{)}$}\textup{)}.
This proof is based on the the \emph{Riesz's rearrangement inequality},
which is a crucial tool no more available once the kernel presents a different symmetry. 
In fact, in \cite{Van06-MR2245755} it is proved that Riesz's inequality fails if the rearrangement 
is performed by means of a general convex body rather than the euclidean ball.

We finally stress that this is not simply a technical issue: \textup{(}\hyperref[H:Dec]{$\mathrm{Dec(}1,\EucNorm{\cdot}\mathrm{)}$}\textup{)}
is essential to obtain the optimality of euclidean balls as \cref{res:BallsNotMinimizers} shows.
 \begin{proposition}[][res:BallsNotMinimizers]
    Let $\K$ be a kernel satisfying \eqref{H:Sym} and \eqref{H:Far}. 
    Assume that there exists $\delta>0$
    such that $\graffe*{\Kdelta>0}$ has non-empty interior, where $\Kdelta\coloneqq\CharFun{\comp{\Bdelta}}\K$.
    Then, euclidean balls of volume $\mr\coloneqq\lebd{\Br}$
    do not solve $\NonlocalIsopPBKernelVolume{\Kdelta*}{\mr}$
    for every $\r\in\intervallo{()}{0}{\delta/2}$.
 \end{proposition}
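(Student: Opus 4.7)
The plan is to reduce the problem, via \cref{res:equiv_PB_PK_VK} (applicable since $\Kdelta\in\LoneRd$ by \eqref{H:Far}), to a comparison of the functional $\VKdelta$. Indeed, that remark yields
$$\NonlocalPerimeterSymbol_{\Kdelta}\tonde{E}=\normLoneRd{\Kdelta}\,\lebd{E}-\VKdelta\tonde{E},$$
so among sets of the fixed volume $\mr$, minimizing $\NonlocalPerimeterSymbol_{\Kdelta}$ is equivalent to maximizing $\VKdelta$. It therefore suffices to exhibit a competitor $E$ with $\lebd{E}=\mr$ and $\VKdelta\tonde{E}>\VKdelta\tonde{\Br}$.

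First I would observe that for every $\r\in\intervallo{()}{0}{\delta/2}$ and every $\x,\y\in\Br$ one has $\EucNorm{\x-\y}<2\r<\delta$, so $\Kdelta\tonde{\x-\y}=0$. Consequently $\VKdelta\tonde{\Br}=0$, and the task reduces to exhibiting a set with strictly positive $\VKdelta$ and the correct volume.

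For the construction, I would exploit the hypothesis to pick $z_0\in\Rd$ and $\eta>0$ with $\BallRadiusCenter{\eta}{z_0}\subset\{\Kdelta>0\}$; by definition of $\Kdelta$ this forces $\EucNorm{z_0}\geq\delta$. I would then pick $N\in\N$ large enough that $\r_N\coloneqq\r N^{-1/\d}$ satisfies $2\r_N<\min\{\eta,\delta\}$, set $x_1=0$, $x_2=z_0$, and place $x_3,\ldots,x_N\in\Rd$ sufficiently far apart so that the balls $\BallRadiusCenter{\r_N}{x_i}$, $i=1,\ldots,N$, are pairwise disjoint (the disjointness of $\BallRadiusCenter{\r_N}{x_1}$ and $\BallRadiusCenter{\r_N}{x_2}$ follows from $\EucNorm{z_0}\geq\delta>2\r_N$). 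I would then define $E_N\coloneqq\bigsqcup_{i=1}^{N}\BallRadiusCenter{\r_N}{x_i}$, so that $\lebd{E_N}=N\lebd{\BallRadius{\r_N}}=\mr$. For $\x\in\BallRadiusCenter{\r_N}{x_1}$ and $\y\in\BallRadiusCenter{\r_N}{x_2}$, the triangle inequality yields $\EucNorm{(\y-\x)-z_0}<2\r_N<\eta$, whence $\y-\x\in\BallRadiusCenter{\eta}{z_0}\subset\{\Kdelta>0\}$, and by symmetry of $\Kdelta$ also $\Kdelta\tonde{\x-\y}>0$. Since $\Kdelta\geq0$, this gives
$$\VKdelta\tonde{E_N}\geq\iint_{\BallRadiusCenter{\r_N}{x_1}\times\BallRadiusCenter{\r_N}{x_2}}\Kdelta\tonde{\x-\y}\integralde\x\integralde\y>0,$$
so $\NonlocalPerimeterSymbol_{\Kdelta}\tonde{E_N}<\NonlocalPerimeterSymbol_{\Kdelta}\tonde{\Br}$, as desired.

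I do not foresee a genuine obstacle in this argument: the construction is explicit, and the only care required is to take $N$ large enough so that the two "active" balls $\BallRadiusCenter{\r_N}{x_1}$ and $\BallRadiusCenter{\r_N}{x_2}$ are small enough for their pairwise interactions to fall inside the open neighborhood $\BallRadiusCenter{\eta}{z_0}\subset\{\Kdelta>0\}$. The remaining $N-2$ balls serve only to absorb the residual volume and may be pushed arbitrarily far away.
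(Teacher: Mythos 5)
Your argument is correct and follows essentially the same route as the paper: reduce via \cref{res:equiv_PB_PK_VK} to maximizing $\VKdelta$ at fixed volume, note that $\VKdelta[\Br]=0$ because all pairwise distances inside $\Br$ are below $\delta$, and produce a disconnected competitor whose components interact through the region where $\Kdelta>0$. The only (immaterial) difference is in the competitor: the paper keeps two balls of volume $\mr/2$ centred at $\pm\xzero/2$ and localizes the positivity estimate to small sub-balls, whereas you shrink to $N$ small balls so that the two ``active'' ones interact entirely inside $\BallRadiusCenter{\eta}{z_0}$ and park the remaining volume far away.
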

 \begin{proof}
    
    Let $\delta>0$, $\r\in\intervallo{()}{0}{\delta/2}$ and $\mr\coloneqq\lebd{\Br}$.
    Thanks to \cref{res:equiv_PB_PK_VK}, we observe that 
    $\NonlocalIsopPBKernelVolume{\Kdelta*}{\mr}$
    is equivalent to
    $\NonlocalMaxPBKernelVolume{\Kdelta*}{\mr}$,
    where $\Kdelta\coloneqq\CharFun{\comp{\Bdelta}}\K\in\LoneRd$ thanks to \eqref{H:Far}.
    
    Let $\xzero$ be a point in the interior of $\graffe*{\Kdelta>0}$
    and let $\epszero>0$ s.t. $\BallRadiusCenter{\epszero}{\xzero}\subset\graffe*{\Kdelta>0}\subset\comp{\Bdelta}$.
    Let $\OmUNO=\BallRadiusCenter{\raggioGiusto}{\xzero/2}$,
    $\OmDUE=\BallRadiusCenter{\raggioGiusto}{-\xzero/2}$
    and observe that
    $\distance\tonde*{\OmUNO,\OmDUE}=\EucNorm{\xzero}-2\raggioGiusto>\delta-2\raggioGiusto>0$.

    We fix $0<\eps<\min\graffe{\epszero/2,\raggioGiusto}$ and put $\Om\coloneqq\OmUNO\sqcup\OmDUE$, noting that $\lebd{\Om}=\mr$.
    Then,
    \begin{equation*}
        \VKdelta[\Om]=\VKdelta[\OmUNO]+\VKdelta[\OmDUE]
        +2\integral{\OmUNO}\integral{\OmDUE}\Kdelta[\x-\y]\integralde\y\de\x.
    \end{equation*}
    In particular, using the fact that for every $\x\in\palluno\subset\OmUNO$ 
    and every $\y\in\palldue\subset\OmDUE$ we have
    $\x-\y\in\BallRadiusCenter{\epszero}{\xzero}\subset\graffe*{\Kdelta>0}$,
    \begin{align*}
        \VKdelta[\Om]
        &\geq
        2\integral{\OmUNO}\integral{\OmDUE}\Kdelta[\x-\y]\integralde\y\de\x
        \\
        &\geq2\integral{\palluno}\integral{\palldue}\Kdelta[\x-\y]\integralde\y\de\x
        >
        0
        =
        \VKdelta[\Br],
    \end{align*}
    where the last equality follows from the restriction $\r\in\intervallo{()}{0}{\delta/2}$
    combined with the inclusion $\Bdelta\subset\graffe*{\Kdelta=0}$.
    This shows that $\Br$ is not a solution of $\NonlocalMaxPBKernelVolume{\Kdelta*}{\mr}$, which 
    concludes the proof.
 \end{proof}
We conclude this note addressing to a possible application of the extension result in \cref{sec:extension} and of the inequalities
previously obtained in \cref{sec:SobolevPoincare}. More precisely, inequality \eqref{PWimproved} can be employed to infer the following
non-local isoperimetric inequality relative to a regular set $\Omega$.

\begin{corollary}[][cor:localisoperimetric] 
Let $\Omega \subset \R^d$ a bounded, open set with Lipschitz boundary, and assume that \eqref{isoperimetricassumption} holds for some $q\in[1,+\infty)$. Let $K$ be a kernel satisfying \eqref{H:Nint}, \eqref{H:Dec}, \eqref{H:Dou}, and \NtsOne{}.  Then there exists a positive constant $C$ such that 
\begin{equation}\label{localisoperimetric}
    \min\{|E\cap \Omega|, |\Omega\setminus E|\}^{\frac{1}{q}} \leq C \mathcal{P}_K(E; \Omega)
\end{equation}
    for every $E\subseteq \R^d$ measurable.    
\end{corollary}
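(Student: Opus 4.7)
The plan is to apply the Poincaré-Wirtinger inequality \eqref{PWimproved} from \cref{prop:poincaré} to the characteristic function $u=\CharFun{\E\cap\Om}$, which belongs to $\WKoneOm$ whenever the right-hand side of \eqref{localisoperimetric} is finite (otherwise there is nothing to prove), since $\Om$ is bounded and, by definition of $\PKresOm[\E]$, one has
\begin{equation*}
    \seminorm{\CharFun{\E}}_{\WKoneOm}
    = \integral{\Om\times\Om} \abs*{\CharFun{\E}[\x]-\CharFun{\E}[\y]}\K[\x-\y]\integralde\x\de\y
    \leq 2\,\PKresOm[\E].
\end{equation*}

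To control the left-hand side of \eqref{PWimproved}, set $\m\coloneqq\lebd{\E\cap\Om}$ and $\M\coloneqq\lebd{\Om}$, so that $\tonde*{\CharFun{\E}}_{\Om}=\m/\M$. A direct computation gives
\begin{equation*}
    \integral{\Om}\abs*{\CharFun{\E}-\tonde*{\CharFun{\E}}_{\Om}}^{q}\integralde\x
    =\frac{\m\tonde{\M-\m}\quadre*{\tonde{\M-\m}^{q-1}+\m^{q-1}}}{\M^{q}}.
\end{equation*}
Assuming without loss of generality that $\m=\min\graffe*{\lebd{\E\cap\Om},\lebd{\Om\setminus\E}}$, one has $\M-\m\geq\M/2$, hence the previous expression is bounded from below by $\m/2^{q}$. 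Combining this lower bound with \eqref{PWimproved} applied to $\u=\CharFun{\E\cap\Om}$ and with the upper bound $\seminorm{\CharFun{\E\cap\Om}}_{\WKoneOm}\leq 2\PKresOm[\E]$ yields \eqref{localisoperimetric} with a constant $C$ depending only on the constant appearing in \eqref{PWimproved}.

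Since the argument is a short chain of elementary estimates chained to \cref{prop:poincaré}, there is no real obstacle; the only delicate point is checking that the hypotheses of \cref{prop:poincaré} are indeed in force, which follows from the assumptions \eqref{H:Nint}, \eqref{H:Dec}, \eqref{H:Dou}, \NtsOne{} imposed on $\K$ together with the Lipschitz regularity of $\bdry{\Om}$ and the isoperimetric-type assumption \eqref{isoperimetricassumption}. Connectedness of $\Om$, although assumed in \cref{prop:poincaré}, can be bypassed by applying the inequality on each connected component of $\Om$ separately and summing the contributions, since $\Om$ being bounded and Lipschitz has only finitely many components.
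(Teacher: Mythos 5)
Your main chain of estimates is exactly the paper's proof: apply \eqref{PWimproved} to the characteristic function, bound the seminorm via $[\chi_E]_{W^{K,1}(\Omega)}\leq 2\,\mathcal{P}_K(E;\Omega)$, and bound the $L^q$-deviation from the mean from below by $\tfrac12\min\{|E\cap\Omega|,|\Omega\setminus E|\}^{1/q}$ using that the larger of the two measures is at least $|\Omega|/2$. Your explicit computation of $\int_{\Omega}|\chi_E-(\chi_E)_\Omega|^q\,\de\x$ is correct and is the same quantity the paper evaluates, so the core of the argument is sound and identical in substance.

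The one point that does not work is your closing remark on connectedness. Proposition \ref{prop:poincaré} does assume $\Omega$ connected, and this cannot be bypassed by applying the relative inequality on each connected component and summing: the left-hand side $\min\{|E\cap\Omega|,|\Omega\setminus E|\}$ is not controlled by $\sum_i\min\{|E\cap\Omega_i|,|\Omega_i\setminus E|\}$. For instance, if $\Omega=\Omega_1\sqcup\Omega_2$ and $E\cap\Omega=\Omega_1$, then every component-wise minimum vanishes while the global minimum equals $\min\{|\Omega_1|,|\Omega_2|\}>0$, so the summed inequalities give no information. (Incidentally, it is also not true in general that a bounded Lipschitz open set has finitely many components.) The paper's own proof silently invokes Proposition \ref{prop:poincaré} without commenting on this hypothesis, so the corollary is really proved only for connected $\Omega$; but the repair you propose does not close that gap, and you should either add connectedness to the hypotheses or give a genuinely different argument for the disconnected case.
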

\begin{proof}
    Without loss of generality, assume $\mathcal{P}_K(E;\Omega)<+\infty$. We prove the statement under the further assumption that $\min\{|E\cap \Omega|, |\Omega\setminus E|\}=|E\cap \Omega|$, the other case being analogous. Applying Proposition \ref{prop:poincaré} with $u=\chi_E$ and noting that we are supposing $|\Omega \setminus E| \geq |\Omega|/2$, we obtain
\begin{align*}
        \mathcal{P}_K(E;\Omega) & \geq \frac{1}{2}[\chi_E]_{W^{K,1}(\Omega)} \\
        & \geq \frac{1}{2C}\Bigl(\int_{\Omega} \Bigl|\chi_E-\frac{|E\cap \Omega|}{|\Omega|}\Bigr|^q\,\de\x\Bigr)^{\frac{1}{q}} \\
        & = \frac{1}{2C} \Bigl[|E \cap \Omega|\Bigl(\frac{|\Omega \setminus E|}{|\Omega|}\Bigr)^q + |\Omega \setminus E|\Bigl(\frac{|E \cap \Omega|}{|\Omega|}\Bigr)^q \Bigr]^{\frac{1}{q}} \\
        & \geq \frac{1}{2C} |E\cap\Omega|^{\frac{1}{q}}\frac{|\Omega \setminus E|}{|\Omega|} \geq \frac{1}{4C} |E\cap\Omega|^{\frac{1}{q}},
        \end{align*}
        and this concludes the proof.
\end{proof}

\nocite{*}
\bibliographystyle{amsalpha}
\bibliography{biblio}

\end{document}